\documentclass{LMCS}

\usepackage{enumerate}
\usepackage{hyperref}
\usepackage{amssymb,amsmath,amsfonts,amsthm,proof}
\theoremstyle{plain}


\newcommand{\csr}{\ensuretext{\sc (cs)}}

\newcommand{\mdr}{(\dmd)}
\newcommand*{\sor}{\mathcal{S}}
\newcommand*{\var}{\mathrm{Var}}
\newcommand*{\fml}{\ensuretext{\rm Fm}}
\def \ensuretext{\textrm }
\newcommand{\lgc}[1]{\ensuremath{{\sf #1}}}
\newcommand{\der}[1]{\ensuremath{\vdash_{\lgc{#1}}}}
\newcommand{\mdl}[1]{\models_{\lgc{#1}}}
\newcommand*{\mg}{{\mathcal G}}
\newcommand*{\mh}{{\mathcal H}}
\newcommand*{\h}{\mid}
\newcommand*{\mbx}{\Box}
\newcommand*{\dmd}{\Diamond}
\newcommand*{\seq}{\Rightarrow}
\newcommand*{\isdef}{=_{\ensuretext{\tiny def}}}
\newcommand*{\ltoe}{\triangleleft}
\newcommand*{\rtoe}{\not{\!\ltoe}\ }


\newcommand{\De}{\Delta}
\newcommand{\Ga}{\Gamma}
\newcommand{\Si}{\Sigma}
\newcommand{\The}{\Theta}

\newcommand{\lam}{\lambda}

\newcommand*{\mdp}{\ensuretext{\sc (mp)}}
\newcommand{\nec}{(\ensuretext{\sc nec})}
\newcommand{\cutr}{\ensuretext{\sc (cut)}}
\newcommand{\mmr}{(\mbx)}

\newcommand{\olr}{(\lor\!\!\seq)}
\newcommand{\orr}{(\seq\!\!\lor)}
\newcommand{\alr}{(\land\!\!\seq)}
\newcommand{\arr}{(\seq\!\!\land)}
\newcommand{\idr}{\ensuretext{\sc (id)}}
\newcommand{\botr}{(\bot\!\!\seq)}
\newcommand{\topr}{(\seq\!\!\top)}
\newcommand{\wlr}{\ensuretext{\sc (wl)}}

\newcommand{\wrr}{\ensuretext{\sc (wr)}}

\newcommand{\clr}{\ensuretext{\sc (cl)}}

\newcommand{\ilr}{(\to\seq)}
\newcommand{\irr}{(\seq\to)}
\newcommand{\nrr}{(\seq\!\lnot)}
\newcommand{\nlr}{(\lnot\!\seq)}
\newcommand{\comr}{\ensuretext{\sc (com)}}
\newcommand{\ecr}{\ensuretext{\sc (ec)}}
\newcommand{\ewr}{\ensuretext{\sc (ew)}}

\def\doi{7 (2:10) 2011}
\lmcsheading%
{\doi}
{1--27}
{}
{}
{Jul.~25, 2010}
{May\phantom.~17, 2011}
{}

\begin{document}

\title[Towards a Proof Theory of G{\"o}del Modal Logics]{Towards a Proof Theory of G{\"o}del Modal Logics\rsuper*}

\author[G.~Metcalfe]{George Metcalfe\rsuper a}	
\address{{\lsuper a}Mathematics Institute, University of Bern, Sidlerstrasse 5, Bern 3012, Switzerland}	
\email{george.metcalfe@math.unibe.ch}  
\thanks{{\lsuper a}The first author acknowledges support from Swiss National Science Foundation grant 20002{\_}129507}	
\author[N.~Olivetti]{Nicola Olivetti\rsuper b}	
\address{{\lsuper b}LSIS-UMR CNRS 6168, Universit{\'e} Paul C{\'e}zanne, 
Campus de Saint J{\'e}r{\^o}me, Avenue Escadrille Normandie-Niemen, 
13397 Marseille Cedex 20, France}	
\email{nicola.olivetti@univ-cezanne.fr}  
\keywords{Fuzzy Logics, Modal Logics, Proof Theory, G{\"o}del Logic}
\subjclass{F.4.1, I.2.3}
\titlecomment{{\lsuper*}A precursor to this paper, covering only the box fragment of the logics, has appeared as~\cite{MO09}.}


\begin{abstract}
  \noindent 
  Analytic proof calculi are introduced for box and diamond fragments of 
  basic modal fuzzy logics that combine the Kripke semantics 
  of  modal logic K with the many-valued semantics of G{\"o}del logic. The calculi are 
   used to establish completeness and complexity results for these 
  fragments.
  \end{abstract}

\maketitle


\section{Introduction}

A broad spectrum of concepts spanning necessity, knowledge, belief, obligation, and spatio-temporal relations   
have been investigated in the field of modal logic (see, e.g.,~\cite{cha:mod}), while notions relating to truth degrees 
such as vagueness, possibility, and uncertainty have received careful attention in the study of fuzzy logics (see, 
e.g.,~\cite{haj:met,met:book}). Relatively little attention, however, has been paid to logics combining these  
approaches, that is, to \emph{modal fuzzy logics}. Ideally, a systematic development of these logics would provide a 
unified approach to a range of topics considered in the literature such as fuzzy belief~\cite{HEGG94,GHE03}, 
spatial reasoning in the presence of vagueness~\cite{SDK09}, 
fuzzy similarity measures~\cite{GR99}, and fuzzy description logics, which may be understood, analogously to classical 
description logics, as multi-modal fuzzy logics (see, e.g.,~\cite{straccia01a,haj:making,Bobillo09}).

Fuzzy modal logics developed for particular applications are typically  situated quite high up in the spectrum of modal logics, 
e.g., at the level of the logic $\lgc{S5}$ (see, e.g.,~\cite{haj:met}) or based on Zadeh's minimal fuzzy logic 
(see, e.g.,~\cite{zhang06}). On the other hand, general approaches dealing with many-valued modal logics, such as~\cite{Fitting91,Fitting92}, 
have concentrated mainly on the finite-valued case. In particular,  Priest~\cite{priest:many} and Bou et al.~\cite{BEGR08,BEGR09} have provided  
frameworks  for studying many-valued modal logics but their results so far (e.g., for axiomatizations and decidability) 
relate mostly to finite-valued modal logics. The general strategy, followed also in this paper, is to consider logics based 
either on standard Kripke frames or Kripke frames where the accessibility 
 relation between worlds is many-valued (fuzzy). Propositional connectives are interpreted using the given (fuzzy) logic at individual worlds, 
while the values of modal formulas $\mbx A$ and $\dmd A$ are calculated using the infima and suprema, respectively, of values of $A$ 
at accessible (to some degree) worlds. Validity can then be defined as usual as truth (e.g., taking the value 1) at all worlds of all models. 
Let us emphasize, however, that this approach diverges significantly from certain other developments in the literature. In particular, 
intuitionistic and intermediate logics extended with modalities, as investigated in, e.g.,~\cite{Sim94,Wolter97}, make use of two accessibility 
relations for Kripke models, one for the modal operator and another for the intuitionistic connectives. Also, the modalities added to fuzzy 
logics in works such as \cite{haj:very,met:mod} represent  truth stressers such as ``very true'' or ``classically true'' 
and, unlike the modalities considered here, may be interpreted as unary functions on the real unit interval.

In this paper, we narrow our focus on the fuzzy side to propositional G\"odel logic $\lgc{G}$, the infinite-valued version of a family of 
finite-valued logics introduced by G\"odel in~\cite{Goedel32}, axiomatized by Dummett in~\cite{Dummett59} by adding the prelinearity 
schema $(A \to B) \lor (B \to A)$ to an axiomatization of propositional intuitionistic logic. Aside from being an important fuzzy and 
intermediate logic, there are good practical reasons to focus on $\lgc{G}$ in modal contexts. As noted in~\cite{BEGR08}, $\lgc{G}$ is the only 
fuzzy logic whose modal analogues with a fuzzy accessibility relation admit the schema $\mbx(A \to B) \to (\mbx A \to \mbx B)$ 
(roughly speaking, since $\lgc{G}$,  unlike other fuzzy logics, admits both weakening and contraction). Also, a multimodal 
variant of this logic (restricted, however, to finite models) has already been proposed  as the basis for a fuzzy description logic 
in~\cite{Bobillo09}.  Caicedo and Rodr{\'i}guez have already provided axiomatizations for the box and diamond fragments of a G\"odel modal logic 
based on fuzzy Kripke frames in~\cite{CR08}, observing  that the box fragment is also characterized by standard Kripke frames and 
does not have the finite model property, while, conversely, the diamond fragment has the finite model property but is not 
characterized by standard Kripke frames. 

In this work, we introduce (the first) analytic proof systems for fragments of the G\"odel modal logic studied by Caicedo and Rodr{\'i}guez  and 
also for the ``other'' diamond fragment based on standard Kripke frames,  the broader aim being to initiate a general investigation into the 
proof theory of modal fuzzy logics (e.g., as undertaken for propositional and first-order fuzzy logics in~\cite{met:book}). A wide range 
of proof systems have been developed for G\"odel logic, including the sequent calculi of Sonobe~\cite{son:gen} and Dyckhoff~\cite{Dyckhoff99}, 
decomposition systems of Avron and Konikowska~\cite{AvrKon:00}, graph-based methods of Larchey-Wendling~\cite{Lar07}, and 
goal-directed systems of Metcalfe et al.~\cite{met:book}. Here we extend the sequent of relations calculus of Baaz and 
Ferm{\"u}ller~\cite{Baaz:1999:ACP}, which provides a particularly elegant and suitable framework for investigating computational properties such as 
complexity and proof search, and also the hypersequent calculus of Avron~\cite{Avron91b}, which is 
better suited to extensions to the first-order level and other logics. More precisely, we provide sequent of relations calculi for all three fragments and 
a hypersequent calculus for the box fragment. We are then able to use these calculi to obtain PSPACE-completeness results for the fragments 
and constructive completeness proofs for Hilbert-style axiomatizations. In the final section, we discuss connections with related work on 
fuzzy description logics, modal intermediate logics, and other proof frameworks, and consider the problems involved with extending 
this approach to the full logics.


\section{G\"odel Logic}

\subsection{Syntax and Semantics}

\begin{figure}[tbp]
\footnotesize
\[
\begin{array}{rlcrl}
(A1)	& A \to (B \to A)			& \quad\quad & 
(A8)	& (A \to B) \to ((C \to A) \to (C \to B))\\
(A2)  & (A \land B) \to A 			&  &
(A9)	& (A \to (B \to C)) \to (B \to (A \to C))\\
(A3)  & (A \land B) \to B			& &	
(A10) &
((A \to C) \land (B \to C)) \to ((A \lor B) \to C)\\
(A4)  & A \to (B \to (A \land B))	& &	
(A11) & (A \to (B \to C)) \to ((A \land B) \to C)\\
(A5)	& (\bot \to A) \land (A \to \top) & & 
(A12) &((C \to A) \land (C \to B)) \to (C \to (A \land B))\\
(A6)	& A \to (A \lor B) & & 
(A13) & (A \to (A \to B)) \to (A \to B)\\
(A7)	& B \to (A \lor B)			& &	
(A14) &  (A \to B) \lor (B \to A)
\end{array}
\]
\[
\infer[\mdp]{B}{A & A \to B}
\]
\caption{The Hilbert System $\lgc{HG}$}
\label{figgodhilbert}
\end{figure}
We define G\"odel logic $\lgc{G}$  based on a language ${\mathcal L}_\lgc{G}$ consisting of a fixed countably infinite set 
$\var$ of (propositional) variables, denoted $p,q$, binary connectives $\to$, $\land$, $\lor$, and constants $\bot$, $\top$. We  
call  variables and constants {\em atoms}, denoted $a,b$. 
The set of formulas $\fml_{{\mathcal L}_\lgc{G}}$, with arbitrary members denoted $A,B,C,\dots$, is defined 
inductively as usual, and the complexity of a formula $A$, denoted $|A|$, is defined as the number of connectives 
occurring in $A$. We let $\lnot A\isdef A \to \bot$ and $A \leftrightarrow B \isdef (A \to B) \land (B \to A)$. 
We use $\Ga,\Pi,\Si,\De,\The$ to stand for finite multisets of formulas, written $[A_1,\ldots,A_n]$,  letting 
$[]$ denote the empty multiset of formulas and $\Ga \uplus \De$ the multiset sum of $\Ga$ and $\De$. 
We write $\bigvee \Ga$ and $\bigwedge \Ga$ with $\bigvee [] \isdef \bot$ and $\bigwedge [] \isdef \top$ for iterated disjunctions 
and conjunctions of formulas, and define $\Ga^0 \isdef []$ and $\Ga^{n+1} \isdef \Ga \uplus \Ga^n$ for 
$n \in \mathbb{N}$.

The standard semantics of G\"odel logic is characterized by the G\"odel t-norm $\min$ and its residuum 
$\to_\lgc{G}$, defined on the real unit interval $[0,1]$ by 
\[
x \to_\lgc{G} y = \begin{cases} y & {\rm if } \ x > y\\1 & {\rm otherwise.}\end{cases}
\]
More precisely, a \emph{$\lgc{G}$-valuation} is a function $v: \fml_{{\mathcal L}_\lgc{G}} \to [0,1]$ satisfying
\[
\begin{array}[t]{rcl}
v(\bot) 	& = & 0\\ 
v(\top) 	& = & 1\\
v(A \to B)  & = & v(A) \to_\lgc{G} v(B)\\
v(A \land B) & = & \min(v(A),v(B))\\
v(A \lor B) & = & \max(v(A),v(B)).\\
\end{array}
\]
A formula $A$ is \emph{$\lgc{G}$-valid}, written $\mdl{\lgc{G}} A$, iff $v(A) = 1$ for all $\lgc{G}$-valuations $v$. 

Sometimes it will be helpful to consider extensions of the language ${\mathcal L}_\lgc{G}$ with (finitely many)  
constants, denoted $c,d$, where $\lgc{G}$-valuations satisfy  $v(c) = r_c$ for each 
additional constant $c$ for some fixed $r_c \in [0,1]$.\footnote{Note that the real number $r_c$ does not appear 
in the language, rather it is fixed at the same time as the constant is added to the language.} 
In such cases, $\top$ and $\bot$ are  
considered together alongside the extra constants, with $v(r_\top) =1$ and $v(r_\bot) = 0$. 

A standard axiomatization $\lgc{HG}$ for G\"odel logic in the language ${\mathcal L}_\lgc{G}$ 
is provided in Figure~\ref{figgodhilbert}. $\lgc{HG}$ is complete with respect to the standard semantics, 
and also with respect to both G\"odel algebras, defined as Heyting algebras obeying the prelinearity law 
$\top = (x \to y) \lor (y \to x)$, and linearly ordered (intuitionistic) Kripke models. Analytic proof systems for G\"odel logic, 
where structures appearing in a derivation are constructed from subformulas of the formula to be proved, 
 have been defined in a number of different frameworks. Below we consider two of the most useful of these frameworks, 
 sequents of relations and hypersequents.


\subsection{Sequents of Relations} 

Sequents of relations, consisting of sets of pairs of formulas ordered by the relations $\le$ and $<$, 
were introduced by Baaz and Ferm{\"u}ller in~\cite{Baaz:1999:ACP} as a proof-theoretic 
framework for  G\"odel and other ``projective'' logics. 
 More formally, a \emph{sequent of relations} $\sor$ for a language ${\mathcal L}$ is a finite (possibly empty) 
 set of ordered triples, written
\[
A_1 \ltoe_1 B_1 \h \ldots \h A_n \ltoe_n B_n
\]
where $A_i,B_i \in \fml_{\mathcal L}$ and  $\ltoe_i \in \{<,\le\}$ for $i = 1\ldots n$. A sequent of relations is 
called {\em atomic} if it contains only atoms.

For G\"odel logic, the syntactic symbol $\ltoe \in \{\le,<\}$ is interpreted as the corresponding relation 
over $[0,1]$. That is, for a sequent of relations $\sor$ for ${\mathcal L}_\lgc{G}$,
\begin{quote}
$\mdl{G} \sor$ \quad iff \quad for all $\lgc{G}$-valuations $v$, $v(A) \ltoe v(B)$ for some $(A \ltoe B) \in \sor$,
\end{quote}
and we say in this case that $\sor$ is \emph{$\lgc{G}$-valid}.

Alternatively, we can define the following formula interpretation:
\[
I_S(\{A_i < B_i\}_{i=1}^n \h \{C_j \le D_j\}_{j=1}^m) = \bigwedge_{i=1}^n (B_i \to A_i) \to \bigvee_{j=1}^m (C_j \to D_j).
\]
It follows easily using the deduction theorem for G\"odel logic that $\mdl{G} \sor$ iff  $\mdl{G} I_S(\sor)$.

\begin{figure}[tbp] 
\flushleft
\begin{scriptsize}
Axioms and Structural Rules\\
\[
\begin{array}{ccc}
\infer[{\idr}]{\sor \h A \le A}{} &  & 
\infer[\csr]{\sor}{\sor \h \top \le \bot \h \bot < \bot}\\[.1in]
\infer[\wlr]{\sor \h A \le B}{\sor \h A \le B \h \top \le B} & &
\infer[\wrr]{\sor \h A \le B}{\sor \h A \le B \h  A \le \bot}\\[.1in]
\infer[{\ewr}]{\sor \h A \ltoe B}{\sor} & & \infer[{\comr}]{\sor \h A \ltoe_1 B \h C \ltoe_2 D}{
 \sor \h A \ltoe_1 B \h C \ltoe_2 D \h A \le D &
 \sor \h A \ltoe_1 B \h C \ltoe_2 D \h C \ltoe_1 B}  \\[.1in]
\end{array}
\]
Logical Rules\\
\[
\begin{array}[t]{ccc}
\infer[(\land\ltoe)]{\sor \h A \land B \ltoe C}{
\sor \h A \ltoe C \h B \ltoe C} & &
\infer[(\ltoe\land)]{\sor \h C \ltoe A \land B}{\sor \h C \ltoe A & \sor \h C \ltoe B}\\[.1in]
\infer[(\lor\ltoe)]{\sor \h A \lor B \ltoe C}{\sor \h A \ltoe C & \sor \h B \ltoe C} & & 
\infer[(\ltoe\lor)]{\sor \h C \ltoe A \lor B}{\sor \h C \ltoe A \h C \ltoe B}\\[.1in]
\infer[(\to<)]{\sor \h A \to B < C}{\sor \h B < A & \sor \h B < C} & & 
\infer[(<\to)]{\sor \h C < A \to B}{\sor \h A \le B \h C < B & \sor \h C < \top}\\[.1in]
\infer[(\to\le)]{\sor \h A \to B \le C}{\sor \h \top \le C \h B < A & \sor \h B \le C} & & 
\infer[(\le\to)]{\sor \h C \le A \to B}{\sor \h A \le B \h C \le B} 
\end{array}
\]
\end{scriptsize}
\caption{The Sequent of Relations Calculus $\lgc{SG}$}
\label{figgodsor}
\end{figure}

The sequent of relations calculus $\lgc{SG}$ for G\"odel logic displayed in Figure~\ref{figgodsor} 
consists of logical rules taken from~\cite{Baaz:1999:ACP} together with additional axioms and structural rules 
based on similar calculi for $\lgc{G}$ presented in~\cite{met:book}. The rule $\comr$ (the only rule with more than one non-context 
relation in the conclusion) reflects the linearity of the truth values in G\"odel logic, while the rule $\ewr$ is not strictly necessary 
for completeness, but is useful for constructing derivations. 

Note also that the following helpful axioms and 
rules for the  constants and negation are derivable:
{\footnotesize
\[
\begin{array}{c}
\infer[(<)]{\sor \h \bot< \top}{}  \qquad
\infer[(\bot\!\le)]{\sor \h \bot \le A}{} \qquad 
\infer[(\le\!\top)]{\sor \h A \le \top}{}\\[.1in]
\begin{array}[t]{ccc}
\infer[(\lnot\!<)]{\sor \h \lnot A < C}{\sor \h \bot < A & \sor \h \bot < C} & & 
\infer[(<\!\lnot)]{\sor \h C < \lnot A}{\sor \h A \le \bot & \sor \h C < \top}\\[.1in]
\infer[(\lnot\!\le)]{\sor \h \lnot A \le B}{\sor \h \top \le B \h \bot < A} & & 
\infer[(\le\!\lnot)]{\sor \h B \le \lnot A}{\sor \h A \le \bot \h B \le \bot} 
\end{array}
\end{array}
\]}
Given some fixed notion of $\lgc{L}$-validity, we say that a rule is {\em $\lgc{L}$-sound} if whenever the premises 
are $\lgc{L}$-valid, then so is the conclusion. Conversely, a rule is {\em $\lgc{L}$-invertible} if whenever the conclusion 
is $\lgc{L}$-valid, then so are the premises. The key observation for $\lgc{SG}$ is that, unlike several other calculi 
for G\"odel logic, the logical rules are not only $\lgc{G}$-sound, but also $\lgc{G}$-invertible~\cite{Baaz:1999:ACP}. 
Since upwards applications of the logical rules terminate (a standard argument), this means that the question of the 
$\lgc{G}$-validity of a sequent of relations can be reduced to the question of the $\lgc{G}$-validity of atomic 
sequents of relations.  

Moreover, the following lemma provides a perspicuous and useful characterization of $\lgc{G}$-valid atomic sequents of relations.

\begin{lem}
An atomic sequent of relations $\sor$ (for ${\mathcal L}_\lgc{G}$ possibly with additional constants) is 
$\lgc{G}$-valid iff there exists $(a_i \ltoe_i a_{i+1}) \in \sor$ for $i=1\ldots n$ such that 
one of the following holds:

\begin{enumerate}[\em(1)]

\item   $a_1 = a_{n+1}$ and $\ltoe_i$ is $\le$ for some $i \in \{1,\ldots,n\}$
                
\item   $a_1 = \bot$ and $\ltoe_i$ is $\le$ for some $i \in \{1,\ldots,n\}$

\item   $a_{n+1} = \top$ and $\ltoe_i$ is $\le$ for some $i \in \{1,\ldots,n\}$

\item   $a_1 = c$, $a_{n+1} = d$, and $r_c < r_d$

\item   $a_1 = c$, $a_{n+1} = d$, $r_c = r_d$, and $\ltoe_i$ is $\le$ for some $i \in \{1,\ldots,n\}$.

\end{enumerate}
\end{lem}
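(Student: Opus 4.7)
The plan is to work from the fact that falsifying a relation $(a \ltoe b) \in \sor$ under a $\lgc{G}$-valuation $v$ means precisely $v(a) > v(b)$ when $\ltoe$ is ${\le}$, and $v(a) \ge v(b)$ when $\ltoe$ is ${<}$. Hence $\sor$ is $\lgc{G}$-valid exactly when no $\lgc{G}$-valuation simultaneously realises the dualised-and-reversed constraint set. Both directions will be handled by analysing what this constraint set demands along chains in $\sor$.

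For the \emph{if} direction, I would take a chain $a_1 \ltoe_1 a_2 \ltoe_2 \cdots \ltoe_n a_{n+1}$ from $\sor$ satisfying one of (1)--(5), and assume for contradiction a $\lgc{G}$-valuation $v$ falsifying every relation in $\sor$. Composing the dual inequalities along the chain yields $v(a_1) \ge v(a_{n+1})$, strictly so whenever some $\ltoe_i$ equals ${\le}$. A contradiction is then immediate in each case: for (1), $v(a_1) > v(a_1)$; for (2), $0 = v(\bot) > v(a_{n+1}) \ge 0$; for (3), $v(a_1) > v(\top) = 1$; for (4), $v(c) \ge v(d)$ clashes with $r_c < r_d$; for (5), one gets $r_c > r_c$.

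For the \emph{only if} direction, assume no chain in $\sor$ satisfies any of (1)--(5); I would construct a $\lgc{G}$-valuation $v$ falsifying every relation in $\sor$. Define a relation $\ge_\sor$ on the atoms of $\sor$ by composing relations along chains (reading them dually), and let $>_\sor$ be its strict version arising from chains containing at least one ${\le}$-step. The negations of (1)--(5) guarantee that $>_\sor$ is irreflexive, that $\bot$ is never forced strictly above any atom, that $\top$ is never forced strictly below any atom, and that the induced preorder on constants is compatible with the prescribed values $r_c$. I would then linearise this preorder and assign real values in $[0,1]$, fixing $v(\bot) = 0$, $v(\top) = 1$, $v(c) = r_c$ for each added constant, and interleaving the remaining atoms in sufficiently small intervals so that every $>_\sor$-link becomes a strict real inequality and every $\ge_\sor$-link becomes a (non-strict) real inequality.

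The main obstacle is the interpolation step around the fixed constant values: one needs enough room between consecutive prescribed values to accommodate all atoms forced to lie between them, with the correct strictness on each side. This is possible exactly because (4) and (5) fail, which ensures that distinct constants with prescribed values relate under $\ge_\sor$ and $>_\sor$ only in ways consistent with the real ordering of their values, so that no atom is simultaneously required to lie strictly above and non-strictly below the same prescribed value. The resulting $v$ falsifies every relation of $\sor$, completing the proof.
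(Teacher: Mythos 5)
Your argument is correct, but it takes a genuinely different route from the paper's. The paper proves the nontrivial direction by induction on the number of distinct variables occurring in $\sor$: it fixes a variable $q$, forms the ``resolvent'' sequent $\sor'$ by composing every pair of relations through $q$ (the composite being strict only when both components are), applies the induction hypothesis to $\sor'$, lifts a witnessing chain of $\sor'$ back to a chain of $\sor$ in the valid case, and in the invalid case extends a counter-valuation of $\sor'$ to $q$ by placing $v(q)$ between $\max\{v(b) \mid (q \ltoe b) \in \sor\}$ and $\min\{v(a) \mid (a \ltoe q) \in \sor\}$ (with a small extra argument when these coincide). You instead build the counter-valuation globally in one pass: you read falsification of $\sor$ as a finite system of reversed order constraints on the atoms, observe that conditions (1)--(5) enumerate exactly the infeasible patterns (strict cycles, strict chains leaving $\bot$ or entering $\top$, and chains between constants inconsistent with their prescribed values), and then linearize the induced preorder and interpolate real values around the fixed points $0$, $1$, and the $r_c$. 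Both arguments are sound, and your ``if'' direction by composing the dual inequalities along a chain is the same easy computation the paper leaves implicit. What each approach buys: the paper's induction keeps every step local --- only one value $v(q)$ must be placed at a time, inside a single interval --- so the verification stays short and no explicit linearization is needed; your global construction makes it transparent why the list (1)--(5) is exhaustive and is closer to an explicit decision procedure, at the price of a more delicate (though routine, since $\sor$ is finite and $[0,1]$ is dense) interpolation step, whose feasibility you correctly trace to the failure of (1) (linearizability of the quotient order), of (2)--(3) (keeping $\bot$ and $\top$ extremal), and of (4)--(5) (compatibility with the prescribed constant values, noting that squeezed atoms always compose into a constant-to-constant chain).
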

\proof
It is clear that $\sor$ is $\lgc{G}$-valid if any of the above conditions are met. For the other direction, we proceed by 
induction on the number of different variables occurring in $\sor$. Note first that if one of
$a  \le a$, $a \le \top$, $\bot \le a$, $c < d$ for $r_c < r_d$, or $c \le d$ for $r_c = r_d$ occurs in $\sor$, then we are done. 
This takes care of the base case. For the inductive step, we fix a  variable $q$ occurring in $\sor$, and define:
\[
\begin{array}[t]{lcl}
\sor_< & \isdef & \{a < b \mid \{a < q, q < b\} \subseteq \sor\}\\[.05in]
\sor_\le & \isdef & \{a \le b \mid \{a \ltoe_1 q, q \ltoe_2 b\} \subseteq \sor,  \le \in \{\ltoe_1, \ltoe_2\}\}\\[.05in]
\sor' &  \isdef & \{a \ltoe b \in \sor \mid a \neq q, b \neq q\} \cup \sor_< \cup \sor_\le.
\end{array}
\]
$\sor'$ has fewer different variables than $\sor$. So if $\sor'$ is $\lgc{G}$-valid, then
applying the induction hypothesis to $\sor'$, we have $(a_i \ltoe_i a_{i+1})
\in \sor'$ for $i=1\ldots n$, satisfying one of the above conditions. But then by replacing
the inequalities $a_i \ltoe_i a_{i+1}$ that occur in $\sor_<$ or $\sor_\le$ appropriately by $a_i \ltoe' q$ and 
$q \ltoe'' a_{i+1}$, we get that one of the conditions holds for $\sor$.
Hence it is sufficient to show that $\sor'$ is $\lgc{G}$-valid. Suppose otherwise,
i.e.,   that there exists a $\lgc{G}$-valuation
$v$ such that $v(a) \ltoe v(b)$ does not hold for any $a \ltoe b \in \sor'$.
We show for a contradiction that $\sor$ is not $\lgc{G}$-valid. Let
\[
x = \min\{v(a) \mid a \ltoe q \in \sor\} \quad \textrm{ and } \quad y = \max\{v(b) \mid q \ltoe b \in \sor\}.
\]
Note first that $x \ge y$. Otherwise it follows that for some $a$, $b$, we have $\{a \ltoe_1 q, q \ltoe_2 b\} \subseteq \sor$ and 
$v(a) < v(b)$. But then $(a \ltoe b) \in \sor'$ so $v(a) \ge v(b)$, a contradiction. Hence there are  two cases. If $x > y$, then we extend 
$v$ such that $x > v(q) > y$. For any $(a \ltoe q)$ or $(q \ltoe b)$ in $\sor$, we have $v(a) \ge x > v(q) > y \ge v(b)$. So 
$\sor$ is not $\lgc{G}$-valid, a contradiction. Now suppose that $x=y$ and extend $v$ such that $v(q)=x$. We must have atoms $a_0$, $b_0$ 
such that  $(a_0 < q)$ and $(q < b_0)$ are in $\sor$ and $v(a_0) = v(b_0) = v(q)$. Now consider any $(a \ltoe_1 q)$ or $(q \ltoe_2 b)$ 
in $\sor$. Since $(a \ltoe_1 b_0)$ and $(a_0 \ltoe_2 b)$  are in $\sor'$, $v(a) \ltoe_1 v(q) = v(b_0)$ and $v(a_0) = v(q) \ltoe_2 v(b)$ 
cannot hold. So $\sor$ is again not $\lgc{G}$-valid, a contradiction. \qed

Let us call an atomic sequent of relations $\sor$ \emph{saturated} if whenever $\sor$ occurs as the conclusion of a logical rule, 
$\comr$, $\csr$, $\wlr$, or $\wrr$, then $\sor$ also occurs as one of the premises. In other words, the sequent of relations 
is ``closed'' under applications of these rules. Then saturated $\lgc{G}$-valid atomic sequents of relations have the 
following property:

\begin{lem} \label{lem:saturatedvalid}
Every saturated $\lgc{G}$-valid atomic sequent of relations $\sor$ (for ${\mathcal L}_\lgc{G}$ possibly with additional constants)  
contains either $(a \le a)$ or $(c \ltoe d)$ where  
$r_c \ltoe r_d$.
\end{lem}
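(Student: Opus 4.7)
The plan is to apply the previous lemma to extract a chain and then systematically collapse it using the saturation rules $\comr$, $\wlr$, and $\wrr$. First, I would invoke the previous lemma to obtain a chain $a_1 \ltoe_1 a_2 \ldots \ltoe_n a_{n+1}$ in $\sor$ satisfying one of conditions (1)--(5). The technical core is a ``collapsing'' sublemma, proved by induction on $n$: in any saturated $\sor$, such a chain forces either $a \le a \in \sor$ for some atom $a$ or a single collapsed relation $a_1 \ltoe' a_{n+1} \in \sor$, with $\ltoe' = \le$ whenever some $\ltoe_i$ in the chain is $\le$. The base case $n = 1$ is immediate by taking $\ltoe' = \ltoe_1$.

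For the inductive step ($n \ge 2$) when some $\ltoe_j$ is $\le$, I would apply $\comr$ saturation to a consecutive pair with the $\le$-relation in the first position: $(a_j \le a_{j+1}, a_{j+1} \ltoe_{j+1} a_{j+2})$ when $j < n$, or $(a_n \le a_{n+1}, a_{n-1} \ltoe_{n-1} a_n)$ when $j = n$. One branch of $\comr$ gives a self-loop ($a_{j+1} \le a_{j+1}$ or $a_n \le a_n$), which closes the case via the $a \le a$ disjunct; the other yields a new $\le$-relation ($a_j \le a_{j+2}$ or $a_{n-1} \le a_{n+1}$) that merges two consecutive chain relations into one, producing a strictly shorter chain that still contains a $\le$, and the induction hypothesis finishes. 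In the remaining case where all $\ltoe_i$ are $<$, the analogous $\comr$ application to $(a_2 < a_3, a_1 < a_2)$ yields $a_2 \le a_2 \in \sor$ (done) or a shorter all-$<$ chain with $a_1 < a_3 \in \sor$, and the induction hypothesis again closes the case.

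The main lemma follows by applying the sublemma to each of the five conditions. Condition (1) immediately yields $a_1 \le a_{n+1} = a_1 \le a_1 \in \sor$. Conditions (4) and (5) yield $c \ltoe' d \in \sor$, and $r_c \ltoe' r_d$ holds by virtue of $r_c < r_d$ (case (4)) or $r_c = r_d$ together with $\ltoe' = \le$ (case (5)). For conditions (2) and (3), the sublemma provides $\bot \le a_{n+1} \in \sor$ or $a_1 \le \top \in \sor$; when the non-constant endpoint is a variable, one further application of $\wrr$ or $\wlr$ saturation yields $\bot \le \bot \in \sor$ or $\top \le \top \in \sor$, which is of the required form $c \le d$ with $r_c \le r_d$. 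The main obstacle is designing the $\comr$ application so that a $\le$ is not lost when the chain is shortened; a naive application to the chain endpoints can leave only a useless self-loop $a_2 < a_2 \in \sor$. The remedy is the local application of $\comr$ at a position $j$ with $\ltoe_j = \le$, which ensures that either the self-loop branch already provides the desired $a \le a$ or the merged relation inserted into the shortened chain carries a $\le$ of its own.
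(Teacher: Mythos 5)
Your proposal is correct and follows essentially the same route as the paper, which only sketches ``a simple induction on $n$, where the base case makes use of \wlr\ and \wrr, and the inductive step makes use of \comr''; your collapsing sublemma, with the careful placement of the \comr\ instance at a $\le$-link so that one branch yields the self-loop $a\le a$ and the other merges two links into a single $\le$-relation, is a faithful and correct filling-in of that sketch. The final use of \wrr/\wlr\ saturation to turn $\bot \le a$ and $a \le \top$ into $\bot\le\bot$ and $\top\le\top$ matches the paper's intended base-case use of those rules.
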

\proof
By the previous lemma, each  saturated $\lgc{G}$-valid atomic sequent of relations $\sor$ contains 
$(a_i \ltoe_i a_{i+1})$ for $i=1\ldots n$ such that one of the conditions  (1)-(5)  holds. The claim then 
follows by a simple induction on $n$, where the base case makes use of $\wlr$ and $\wrr$, and the 
inductive step makes use of $\comr$. \qed

\noindent
Note that in the case of sequents of relations  for ${\mathcal L}_\lgc{G}$ 
without extra constants, every saturated $\lgc{G}$-valid atomic sequent of relations, 
and hence every $\lgc{G}$-valid atomic sequent of relations, is derivable in $\lgc{SG}$. Moreover, 
since, as mentioned above, the logical rules are $\lgc{G}$-sound and $\lgc{G}$-invertible, and reduce 
$\lgc{G}$-valid sequents of relations to $\lgc{G}$-valid atomic sequents of relations, 
it follows that:

\begin{thm}
For any sequent of relations $\sor$ for ${\mathcal L}_\lgc{G}$: \ 
$\der{SG} \sor$ iff $\mdl{G} \sor$ iff $\mdl{G} I_S(\sor)$. \qed
\end{thm}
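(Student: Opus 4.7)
The plan is to handle the three equivalences in turn. For $\mdl{G} \sor \iff \mdl{G} I_S(\sor)$, the paper has already indicated that this follows from the deduction theorem; concretely, unpacking $I_S$, the implication $\bigwedge_i (B_i \to A_i) \to \bigvee_j (C_j \to D_j)$ evaluates to $1$ under a $\lgc{G}$-valuation $v$ exactly when either $v(C_j) \le v(D_j)$ for some $j$ or $v(A_i) < v(B_i)$ for some $i$, which matches the semantic reading of $\mdl{G} \sor$ term-for-term using the definition of $\to_\lgc{G}$. So this equivalence is immediate.

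For soundness ($\der{SG} \sor \Rightarrow \mdl{G} \sor$), I would argue by induction on the derivation. The axioms $\idr$ and $\csr$ are clearly $\lgc{G}$-valid, and each logical or structural rule preserves $\lgc{G}$-validity by a direct calculation with $\min$, $\max$, and $\to_\lgc{G}$. The two interesting cases are the implication rules, which rely on the two-case definition of the residuum, and $\comr$, which uses the linear order on $[0,1]$: if neither $v(A) \le v(D)$ nor $v(C) \ltoe_1 v(B)$ held then (by linearity) $v(A)$ would strictly exceed $v(D)$ and $v(C)$ strictly exceeds-or-equals $v(B)$, contradicting $\lgc{G}$-validity of both premises.

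For completeness ($\mdl{G} \sor \Rightarrow \der{SG} \sor$), I would build a derivation bottom-up in two phases. First, apply the logical rules upward exhaustively; since they are $\lgc{G}$-invertible (as noted in the text, citing~\cite{Baaz:1999:ACP}) and their upward application strictly reduces the multiset of connective complexities of formulas in the relations, this terminates at atomic sequents of relations, each of which is still $\lgc{G}$-valid. Second, from each such atomic $\sor'$ I would apply $\comr$, $\csr$, $\wlr$, and $\wrr$ upward until saturation; these moves preserve $\lgc{G}$-validity upward because every premise extends the conclusion with additional disjuncts. Atoms are never newly generated by these rules (the $\comr$-premises reuse the atoms $A,B,C,D$ already present, and $\wlr,\wrr,\csr$ only introduce $\top,\bot$), so the pool of possible atomic inequalities is finite and a fair saturation strategy terminates in a finite tree of saturated $\lgc{G}$-valid atomic leaves. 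By Lemma \ref{lem:saturatedvalid}, specialized to the constant-free language ${\mathcal L}_\lgc{G}$, each such leaf contains some $(a \le a)$, i.e.\ is an instance of $\idr$, closing the derivation. The main delicate point is ensuring that the saturation phase terminates under a fair strategy despite the branching in $\comr$; the key observation that nails this down is precisely that no structural rule introduces a fresh atom, so only finitely many distinct atomic sequents are ever reachable.
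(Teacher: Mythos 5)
Your proposal is correct and takes essentially the same route as the paper, which likewise handles $\mdl{G}\sor$ iff $\mdl{G} I_S(\sor)$ via the deduction theorem, soundness by rule inspection, and completeness by invertibility of the logical rules followed by saturation under the structural rules and an appeal to Lemma~\ref{lem:saturatedvalid}. One small imprecision: since $\top$ and $\bot$ count as constants with $r_\top=1$ and $r_\bot=0$, that lemma in the constant-free case still permits a saturated leaf to contain $(\bot<\top)$ or $(\bot\le\top)$ rather than an \idr-instance $(a\le a)$, but such leaves are derivable anyway (via the derived axioms $(<)$, $(\bot\!\le)$, $(\le\!\top)$, or by observing that saturation under \wlr\ then forces $\top\le\top$ into the leaf), so your argument closes as intended.
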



\subsection{A Hypersequent Calculus} 

Hypersequents were introduced by Avron in~\cite{Avron87} as a generalization of Gentzen sequents that allow 
disjunctive or parallel forms of reasoning. Instead of a single sequent, there is a collection 
of sequents that can be ``worked on''  simultaneously. More precisely,  a \emph{(single-conclusion) sequent} $S$ for a 
language ${\mathcal L}$ is (defined here as) an ordered pair consisting of a finite multiset  $\Ga$ of ${\mathcal L}$-formulas 
and a multiset $\De$ containing at most one $\mathcal{L}$-formula, written $\Ga \seq \De$. A 
\emph{(single-conclusion) hypersequent} $\mg$ for ${\mathcal L}$ is a finite (possibly empty) 
multiset of sequents for ${\mathcal L}$, written
 $\Ga_1 \seq \De_1 \h \ldots \h \Ga_n \seq \De_n$ or sometimes, for short, as $[\Ga_i \seq \De_i]_{i=1}^n$. 
 
 We interpret sequents and hypersequents  for G\"odel logic as follows (recalling that 
 $\bigwedge [] \isdef \top$ and $\bigvee [] \isdef \bot$):
\[
\begin{array}{rcl}
I_H(\Ga \seq \De) & \isdef & \bigwedge \Ga \to \bigvee \De\\[.05in]
I_H(S_1 \h \ldots \h S_n) & \isdef  & I_H(S_1) \lor \ldots \lor I_H(S_n).
\end{array}
\] 
Hypersequent calculi admitting cut-elimination have been defined for a wide range of fuzzy logics 
(for details see~\cite{met:book}). In particular, the first example of such a system (modulo a few inessential 
changes) was the calculus $\lgc{GG}$ defined for G\"odel logic by 
Avron in~\cite{Avron91b}. This calculus, displayed in Figure~\ref{figgod}, can be viewed as 
a direct extension of a sequent calculus for intuitionistic logic. Namely, the  
axioms, weakening, contraction, cut, and logical rules, are obtained from standard sequent rules simply by 
adding a hypersequent context $\mg$ to the premises and conclusion. The external weakening and 
contraction rules, $\ewr$ and $\ecr$, reflect the interpretation of ``$\h$'' as a meta-level disjunction, 
while the communication rule $\comr$, corresponding to the prelinearity axiom schema 
$(A \to B) \lor (B \to A)$, is the crucial ingredient in extending the system beyond intuitionistic logic.

\begin{exa}
Consider the following derivation in $\lgc{GG}$:
{\small\[
\infer[{\ecr}]{\seq (p \to q) \lor (q \to p)}{
 \infer[{\orr_1}]{\seq (p \to q) \lor (q \to p) \h \seq (p \to q) \lor (q \to p)}{
 \infer[{\orr_2}]{\seq p \to q \h \seq (p \to q) \lor (q \to p)}{
    \infer[{\irr}]{\seq p \to q \h \seq q \to p}{
     \infer[{\irr}]{p \seq q \h \seq q \to p}{
      \infer[{\comr}]{p \seq q \h q \seq p}{
       \infer[{\idr}]{p \seq p}{} &
       \infer[{\idr}]{q \seq q}{}}}}}}}
\]}
Notice that the hypersequent $(p \seq q \h q \seq p)$ two lines down might be read as just a ``hypersequent translation'' of
the prelinearity axiom $(p \to q) \lor (q \to p)$.
\end{exa}

\begin{thm}[\cite{Avron91b}]\hfill
\begin{enumerate}[\em(1)]
\item For any hypersequent $\mg$ for ${\mathcal L}_\lgc{G}$: \ $\mdl{G} I_H(\mg)$ iff $\der{GG} \mg$.
\item $\lgc{GG}$ admits cut-elimination. \qed
\end{enumerate}
\end{thm}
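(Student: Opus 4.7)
The plan is to prove the two parts in the order cut-elimination (2), then soundness-plus-completeness (1), with completeness exploiting cut-freeness together with the Hilbert-style system $\lgc{HG}$ already in hand.

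For soundness of part (1), I would check by induction on derivations that every rule preserves the property ``$v(I_H(\mg)) = 1$ for every $\lgc{G}$-valuation $v$''. The axioms and the standard intuitionistic logical rules are routine because $\min$ and $\to_\lgc{G}$ interpret $\land$ and $\to$ correctly at each world of reasoning. The external structural rules $\ewr$ and $\ecr$ reflect the fact that $I_H$ treats ``$\mid$'' as $\lor$, so they amount to properties of $\max$. The only rule that uses something non-intuitionistic is $\comr$: here I would argue that for any valuation $v$, the linearity of $[0,1]$ forces either $v(\bigwedge\Ga_1) \le v(\bigwedge\Ga_2)$ or the other way around, and in each case one of the two new components on the conclusion evaluates to $1$. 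This is exactly where the totality of the order on truth values is used.

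For cut-elimination (part (2)), I would use the standard double induction on the pair (complexity of the cut formula, sum of the heights of the two subderivations above the cut), in the lexicographic order. The key reductions are: (i) if the cut formula is not principal in one of the premises, commute the cut upward past the last rule; (ii) if the cut formula is principal in both premises with a logical rule, replace the cut by one or two cuts on strict subformulas, using an admissible multiplicative contraction lemma to discharge duplicated contexts. The genuinely new case, and the main obstacle, is when one premise of the cut is the conclusion of $\comr$ so the cut formula sits on one of the communicated components. Here I would adopt the device used in Avron and in the Metcalfe et al.\ book: prove first that contraction, weakening, and exchange are height-preserving admissible, then show that a cut can be pushed through $\comr$ by producing two smaller cuts on the two premises of the $\comr$ step and re-applying $\comr$ and $\ecr$ to the results. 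To make this work uniformly it is often convenient to replace the cut rule by a mix rule that cuts several copies simultaneously, eliminate mix, and then recover cut-elimination as a corollary. The whole argument terminates because each reduction either strictly lowers the cut rank or strictly lowers the cut height at the same rank.

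For completeness in part (1), given cut-elimination I would derive it via the Hilbert system $\lgc{HG}$. Concretely, I would exhibit for each axiom $(A1)$--$(A14)$ of $\lgc{HG}$ a derivation in $\lgc{GG}$ of $\seq A$; all intuitionistic axioms $(A1)$--$(A13)$ follow the way the prelinearity derivation in the worked example does, and $(A14)$ is essentially that example itself. Then I would show that modus ponens on formulas $A$ and $A\to A$ $\to B$ is simulated by one application of $\irr$, followed by a cut. Combining these with the known Dummett completeness of $\lgc{HG}$ with respect to $\mdl{G}$ gives: $\mdl{G} I_H(\mg)$ implies $\der{HG} I_H(\mg)$ implies $\der{GG} \seq I_H(\mg)$, from which $\der{GG} \mg$ follows by a short derivation that unfolds $I_H$ using $(\orr)$, $(\irr)$, $(\arr)$, and $\ecr$. (Cut-freeness is not needed for this direction, but part (2) is still interesting in its own right and is what one would use to extract algorithmic information such as decidability.) The main obstacle, as noted, is the cut versus $\comr$ interaction in step (2); the rest is standard once the admissibility lemmas for weakening and contraction are in place.
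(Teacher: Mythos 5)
The paper does not prove this theorem at all: it is quoted from Avron~\cite{Avron91b} and stamped with \qed, so there is no in-paper argument to compare against. Your sketch is, however, a faithful reconstruction of the standard proof, and it matches the strategy the paper itself deploys for the modal analogue $\lgc{GGK_\mbx}$: soundness by induction on derivations with linearity of $[0,1]$ handling \comr, cut-elimination by lexicographic induction on (cut-formula complexity, height) with a multicut/mix-style generalization to get past \comr{} and contraction (exactly the shape of the Claim in the paper's cut-elimination proof for $\lgc{GGK_\mbx}$), and completeness routed through $\lgc{HG}$ followed by the ``unfolding'' step that the paper cites as Proposition~4.61 of~\cite{met:book}. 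Two small imprecisions, neither fatal: in the \comr{} soundness case the comparison forced by linearity is between the \emph{crossed} contexts (e.g.\ $v(\bigwedge\Ga_2)$ versus $v(\bigwedge\Pi_1)$), not between $v(\bigwedge\Ga_1)$ and $v(\bigwedge\Ga_2)$; and recovering $\der{GG}\mg$ from $\der{GG}\;\seq I_H(\mg)$ decomposes $I_H(\mg)$ on the \emph{left} of a sequent, so it needs \olr, \ilr, \alr{} and \cutr{} rather than the right rules you list.
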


\noindent
For future reference, we observe that the following rules for the 
defined negation  $\lnot A \isdef A \to \bot$ are derivable in $\lgc{GG}$:
{\small
\[
\infer[\nlr]{\mg \h \Ga, \lnot A \seq}{\mg \h \Ga \seq A} \qquad\qquad
\infer[\nrr]{\mg \h \Ga \seq \lnot A}{\mg \h \Ga, A \seq}
\]}

\begin{figure}[tbp]
\flushleft
\begin{scriptsize}
Axioms and Structural Rules
\[
\begin{array}{ccccc}
\infer[{\idr}]{\mg \h A \seq A}{} & \quad\quad & 
\infer[\botr]{\mg \h \Ga, \bot \seq \Delta}{} & \quad\quad & 
\infer[\topr]{\mg \h \Ga \seq \top}{}\\[.1in]
\infer[{\ecr}]{\mg \h \mh}{\mg \h \mh \h \mh} & &
\infer[{\ewr}]{\mg \h \mh}{\mg} & & 
\infer[{\comr}]{\mg \h \Ga_1, \Ga_2 \seq \De_1 \h \Pi_1, \Pi_2 \seq \De_2}{
\mg \h \Ga_1,\Pi_1 \seq \De_1 & \mg \h \Ga_2,\Pi_2 \seq \De_2}\\[.1in]
\infer[\wlr]{\mg \h \Ga, A \seq \De}{\mg \h \Ga \seq \De} & & 
\infer[\wrr]{\mg \h \Ga \seq A}{\mg \h \Ga \seq } & &
\infer[\clr]{\mg \h \Ga, A \seq \De}{\mg \h \Ga, A, A \seq \De} 
\end{array}
\]
Logical Rules
\[
\begin{array}{ccc}
\infer[\ilr]{\mg \h \Ga, A \to B \seq \De}{\mg \h \Ga \seq A & \mg \h \Ga, B \seq \De} & &
\infer[\irr]{\mg \h \Ga \seq A \to B}{\mg \h \Ga,A \seq B}\\[.1in]
\infer[\alr_1]{\mg \h \Ga, A \land B \seq \De}{\mg \h \Ga,A \seq \De} & &
\infer[\alr_2]{\mg \h \Ga, A \land B \seq \De}{\mg \h \Ga,B \seq \De}\\[.1in]
\infer[\olr]{\mg \h \Ga, A \lor B \seq \De}{\mg \h \Ga, A \seq \De & \mg \h \Ga, B \seq \De} & & 
\infer[\arr]{\mg \h \Ga \seq A \land B}{\mg \h \Ga \seq A & \mg \h \Ga \seq B}\\[.1in]
\infer[\orr_1]{\mg \h \Ga \seq A \lor B}{\mg \h \Ga \seq A} & & 
\infer[\orr_2]{\mg \h \Ga \seq A \lor B}{\mg \h \Ga \seq B}
\end{array}
\]
Cut Rule
\[\infer[\cutr]{\mg \h \Ga_1, \Ga_2 \seq \De}{
\mg \h \Ga_1,A \seq \De & \mg \h \Ga_2 \seq A}
\]
\end{scriptsize}
\caption{The Gentzen System $\lgc{GG}$}
\label{figgod}
\end{figure}

While the logical rules of the sequent of relations calculus $\lgc{SG}$ are invertible, with consequent 
advantages for establishing complexity and interpolation results and building efficient proof systems,  
the hypersequent calculus $\lgc{GG}$ does not have this property. The virtue of the system lies rather 
with its close connection to sequent calculi for intuitionistic logic and the existence of relatively 
straightforward cut-elimination proofs. In particular, this means that $\lgc{GG}$, unlike $\lgc{SG}$, can be easily 
extended to the first-order level or with propositional quantifiers, and used to prove, for 
example, completeness, Herbrand theorems, and Skolemization results (see, e.g.,~\cite{BaaZac00,met:book}).


\section{Adding Modalities}

\subsection{Syntax and Semantics} 

We extend our language ${\mathcal L}_\lgc{G}$ to a modal language ${\mathcal L}_{\mbx\dmd}$ by 
adding the unary operators $\mbx$ and $\dmd$, obtaining a set of formulas $\fml_{{\mathcal L}_{\mbx\dmd}}$. 
For a finite multiset $\Ga = [A_1,\ldots,A_n]$ of ${\mathcal L}_{\mbx\dmd}$-formulas and $\star \in \{\mbx,\dmd\}$, 
we let $\star \Ga \isdef [\star A_1, \ldots, \star A_n]$. G\"odel modal logics are then defined,  following similar ideas 
proposed in \cite{Fitting91,Fitting92,haj:met,CR08,BEGR08}, as generalizations of the modal 
logic $\lgc{K}$ where connectives behave locally at individual worlds as in G\"odel logic. In particular, 
$\lgc{GK}$ and $\lgc{GK^F}$ are G\"odel modal logics based on, respectively,  
standard Kripke frames and Kripke frames with fuzzy accessibility relations.

A \emph{fuzzy Kripke frame} is a pair $F = \langle W, R \rangle$ where 
$W$ is a non-empty set of {\em worlds} and $R: W \times W \to [0,1]$  is a binary 
\emph{fuzzy accessibility relation} on $W$. If $Rxy \in \{0,1\}$ for all $x,y \in W$, then 
$R$ is called {\em crisp} and $F$ is called simply  a  \emph{(standard) Kripke frame}. In this 
case, we often write $R \subseteq W^2$ and $Rxy$ or $(x,y) \in R$ to mean $Rxy = 1$. 
A \emph{Kripke model for $\lgc{GK^F}$} is then a 3-tuple $K = \langle W, R, V \rangle$ where 
$\langle W,R \rangle$ is a fuzzy Kripke 
frame and $V : \var \times W \to [0,1]$ is a mapping, called a \emph{valuation},  extended 
to $V : \fml_{{\mathcal L}_{\mbx\dmd}} \times W \to [0,1]$ as follows:
\[
\begin{array}{rcl}
V(\bot,x) 	    & = & 0\\
V(\top,x)	    & = & 1\\
V(A \to B,x) & = & V(A,x) \to_\lgc{G} V(B,x)\\
V(A \land B,x) & = & \min(V(A,x),V(B,x))\\
V(A \lor B,x)    & = & \max(V(A,x),V(B,x))\\
V(\mbx A,x) 	& = & \inf \{Rxy \to_\lgc{G} V(A,y) \mid y \in W\} \\ 
V(\dmd A,x) 	& = & \sup \{\min(V(A,y), Rxy) \mid y \in W\}.
\end{array}
\]
A {\em Kripke model for $\lgc{GK}$} satisfies the extra condition 
that $\langle W,R \rangle$ is a standard Kripke frame. In this 
case, the conditions for $\mbx$ and $\dmd$  may also be read as:
\[
\begin{array}{rcl}
V(\mbx A,x)  & = & \inf  (\{1\} \cup \{V(A,y) \mid Rxy\})\\
V(\dmd A,x)   & = & \sup  (\{0\} \cup \{V(A,y) \mid Rxy\}).
\end{array}
\]
$A \in \fml_{\mbx\dmd}$ is {\em valid} in  $\langle W,R,V \rangle$ if 
$V(A,x) = 1$ for all $x \in W$. $A$ is \emph{$\lgc{L}$-valid} for $\lgc{L} \in \{\lgc{GK},\lgc{GK^F}\}$, written $\mdl{L} A$, 
 if $A$ is valid in all Kripke models $\langle W,R,V \rangle$ for $\lgc{L}$.
 
An important feature of G\"odel logic and, more particularly, Kripke models for $\lgc{GK^F}$, is that only the 
{\em order} of truth values matters. The following lemma is proved by a straightforward induction on formula 
complexity.

\begin{lem} \label{lemscaletwo}
Let $K = \langle W, R, V \rangle$ be a Kripke model for  $\lgc{GK^F}$ and $h : [0,1] \to [0,1]$ an 
order-automorphism of the real unit interval. Define $K' = \langle W, R', V' \rangle$ where $R'xy = h(Rxy)$ for all $x,y \in W$ and 
$V'(p,x) = h(V(p,x))$ for each $p \in \var$ and $x \in W$. 
Then $V'(A,x) = h(V(A,x))$ for all $A \in \fml_{\mbx\dmd}$ and $x \in W$. \qed
\end{lem}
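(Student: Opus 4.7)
The plan is to proceed by structural induction on the complexity of $A$. Before starting the induction, I would record a few facts about an order-automorphism $h$ of $[0,1]$ that will be invoked throughout: it must fix the endpoints, so $h(0)=0$ and $h(1)=1$; it commutes with $\min$ and $\max$ (these are order-determined); it commutes with G\"odel implication, in the sense that $h(x \to_\lgc{G} y) = h(x) \to_\lgc{G} h(y)$, which follows by case-splitting on whether $x \le y$ (both sides $=1$) or $x > y$ (both sides $= h(y)$); and finally it preserves arbitrary infima and suprema on $[0,1]$, since any order-preserving bijection of $[0,1]$ is automatically continuous (a jump discontinuity would leave a gap in the image).

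The base cases are immediate. For $p \in \var$ the conclusion is the defining condition $V'(p,x) = h(V(p,x))$. For the constants we have $V'(\bot,x) = 0 = h(0) = h(V(\bot,x))$, and similarly for $\top$, using that $h$ fixes $0$ and $1$. For the propositional inductive step on $A \land B$, $A \lor B$, and $A \to B$, I would apply the induction hypothesis to each subformula and then pull $h$ outside using its commutation with $\min$, $\max$, and $\to_\lgc{G}$ respectively.

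The modal cases require one extra bookkeeping step but no new idea. For $\mbx A$, by the induction hypothesis $V'(A,y) = h(V(A,y))$, and by construction $R'xy = h(Rxy)$; hence each term in the defining infimum satisfies
\[
R'xy \to_\lgc{G} V'(A,y) \;=\; h(Rxy) \to_\lgc{G} h(V(A,y)) \;=\; h\bigl(Rxy \to_\lgc{G} V(A,y)\bigr).
\]
Taking the infimum over $y \in W$ and pulling $h$ outside using preservation of infima gives $V'(\mbx A,x) = h(V(\mbx A,x))$. The case $\dmd A$ is symmetric, using commutation of $h$ with $\min$ and preservation of suprema.

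The only genuinely delicate point—the one worth flagging as the ``main obstacle''—is the preservation of infima and suprema in the modal clauses: this would fail for a general order-preserving map on $[0,1]$, and relies essentially on $h$ being a bijection (equivalently, continuous). Everything else is routine unfolding of definitions.
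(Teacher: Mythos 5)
Your proof is correct and is exactly the ``straightforward induction on formula complexity'' that the paper invokes without writing out: the paper leaves the lemma unproved apart from that remark. Your identification of the key point---that an order-automorphism of $[0,1]$ preserves arbitrary infima and suprema, which is what makes the modal clauses go through---is precisely the content being glossed over, and your case analysis for $h(x \to_\lgc{G} y) = h(x) \to_\lgc{G} h(y)$ is accurate.
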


Moreover, for Kripke models for  $\lgc{GK}$ (which, recall, have a crisp accessibility relation), we can 
shift the values of all formulas above a certain threshold to $1$, while preserving the values of formulas 
below that threshold.

\begin{lem} \label{lemscaleone}
Let $K = \langle W, R, V \rangle$ be a Kripke model for $\lgc{GK}$ and $\lam \in (0,1]$.  Define 
 $K' = \langle W, R, V' \rangle$ where $V'(p, x) = \lam \to_\lgc{G} V(p,x)$ for each $pÊ\in \var$ and  $x \in W$. 
 Then $V'(A, x) = \lam \to_\lgc{G} V(A,x)$  for all  $A \in \fml_{\mbx\dmd}$ and $x\in W$.
\end{lem}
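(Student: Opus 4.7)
The plan is to prove the claim by induction on the complexity $|A|$, in direct analogy with the proof of Lemma~\ref{lemscaletwo}.

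For the base cases, if $A = p \in \var$ the equality holds by the definition of $V'$; if $A = \bot$ we have $V'(\bot,x) = 0 = \lam \to_\lgc{G} 0 = \lam \to_\lgc{G} V(\bot,x)$, where the second equality uses $\lam > 0$; and if $A = \top$ both sides equal $1$. For the propositional step I would verify, by a routine case split on whether arguments lie above or below $\lam$, the three algebraic identities
\[
\lam \to_\lgc{G} \min(a,b) = \min(\lam \to_\lgc{G} a,\, \lam \to_\lgc{G} b),
\]
\[
\lam \to_\lgc{G} \max(a,b) = \max(\lam \to_\lgc{G} a,\, \lam \to_\lgc{G} b),
\]
\[
\lam \to_\lgc{G} (a \to_\lgc{G} b) = (\lam \to_\lgc{G} a) \to_\lgc{G} (\lam \to_\lgc{G} b),
\]
so that the cases $A \equiv B \land C$, $A \equiv B \lor C$, and $A \equiv B \to C$ all follow immediately from the induction hypothesis applied to the immediate subformulas.

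For the modal cases, crispness of $R$ permits the reformulations $V(\mbx A,x) = \inf(\{1\} \cup \{V(A,y) : Rxy\})$ and $V(\dmd A,x) = \sup(\{0\} \cup \{V(A,y) : Rxy\})$, with matching expressions for $V'$. The $\mbx$ case hinges on the auxiliary observation that the map $t \mapsto \lam \to_\lgc{G} t$ commutes with arbitrary infima in $[0,1]$, which is easily checked by splitting on whether the infimum lies at or above the threshold $\lam$. Combining this commutation with the induction hypothesis applied pointwise to $\{V(A,y) : Rxy\}$ yields the desired $V'(\mbx A,x) = \lam \to_\lgc{G} V(\mbx A,x)$.

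The $\dmd$ case is the main obstacle, because $t \mapsto \lam \to_\lgc{G} t$ has a jump at $\lam$ and does not commute with arbitrary suprema. I would therefore argue directly by a case split on $c = V(\dmd A,x)$. If $c > \lam$, there is an accessible $y$ with $V(A,y) > \lam$, so by IH $V'(A,y) = 1$ and both $V'(\dmd A,x)$ and $\lam \to_\lgc{G} c$ equal $1$. If $c < \lam$, every accessible $V(A,y)$ lies strictly below $\lam$ and is fixed by the shift, so both sides equal $c$. The remaining boundary case $c = \lam$ demands a witness $y$ with $V(A,y) \ge \lam$; this is where the argument is tightest, and I would either invoke the order-isomorphism of Lemma~\ref{lemscaletwo} to rescale the model so as to displace the critical threshold, or reduce to a sub-model in which the relevant supremum is attained, so that by IH $V'(A,y) = 1$ and the two sides again agree at $1$.
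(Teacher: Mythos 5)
Your base cases, propositional cases, and $\mbx$ case all match the paper's proof: the paper simply asserts the corresponding algebraic identities, your verification of them is correct, and the map $t \mapsto \lam \to_\lgc{G} t$ does commute with arbitrary infima, so the $\mbx$ case is sound. The genuine gap is exactly where you say the argument is ``tightest'': the $\dmd$ case with $c = V(\dmd A,x) = \lam$ and the supremum not attained. Neither of your proposed repairs can work, because the lemma asserts an identity for the \emph{specific} valuation $V'$ built from the given $K$ and the given $\lam$; rescaling by an order-automorphism or passing to a submodel produces a different model and says nothing about this $V'$. In fact the boundary case is a counterexample to the lemma as stated: take $W = \mathbb{N}$, $R = \{(0,n) \mid n \ge 1\}$, $V(p,n) = \tfrac{1}{2} - \tfrac{1}{n+1}$, and $\lam = \tfrac{1}{2}$. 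Then $V(\dmd p,0) = \tfrac{1}{2}$, so $\lam \to_\lgc{G} V(\dmd p,0) = 1$, while every $V(p,n)$ lies strictly below $\lam$ and is fixed by the shift, so $V'(\dmd p,0) = \tfrac{1}{2} \neq 1$. There is no witness to be found, and no argument will close this case.

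You should know that the paper's own proof has the same defect: in its $\dmd$ case it silently moves $\lam \to_\lgc{G}$ inside the supremum, which is precisely the false commutation you identified. The paper's results survive because Lemma~\ref{lemscaleone} is only ever invoked (in Lemma~\ref{lemle}) for formulas of the box fragment $\fml_{{\mathcal L}_\mbx}$, where only the infimum commutation is needed and that one does hold: if $\inf_i t_i \ge \lam$ then every $\lam \to_\lgc{G} t_i = 1$, and if $\inf_i t_i < \lam$ then values $t_i < \lam$ arbitrarily close to the infimum are fixed by the map while the rest are sent to $1$. So the correct resolution is not a cleverer argument for the $\dmd$ boundary case but a restriction of the statement (and the induction) to $\fml_{{\mathcal L}_\mbx}$, or a weakening of the $\dmd$ clause to the inequality $V'(\dmd A,x) \le \lam \to_\lgc{G} V(\dmd A,x)$.
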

\begin{proof}
We proceed by induction on $|A|$. The base cases are immediate. Suppose 
that $A$ is $B \to C$. Then, using the induction hypothesis for the second step:
\[
\begin{array}{rcl}
V'(B \to C,x) 	& = & V'(B,x) \to_\lgc{G} V'(C,x)\\
			& = & (\lam \to_\lgc{G} V(B,x)) \to_\lgc{G} (\lam \to_\lgc{G} V(C,x))\\
			& = & \lam \to_\lgc{G} (V(B,x) \to_\lgc{G} V(C,x))\\			
			& = & \lam \to_\lgc{G}  V(B \to C,x).\\
\end{array}
\]
If $A$ is $\dmd B$, then, again using the induction hypothesis for the second step:
\[
\begin{array}{rcl}
V'(\dmd B,x) 	& = & \sup  (\{0\} \cup \{V'(B,y) \mid Rxy\})\\	
			& = & \sup  (\{0\} \cup \{\lam  \to_\lgc{G} V(B,y) \mid Rxy\})\\			
			& = & \lam  \to_\lgc{G}  \sup  (\{0\} \cup \{V(B,y) \mid Rxy\})\\
			& = & \lam \to_\lgc{G} V(\dmd B,x).
\end{array}
\]
Cases for the other connectives are very similar.
\end{proof}


\subsection{Box and Diamond Fragments} \label{ss:boxdiamond}

Due to the difficulty of dealing with the full language (see Section~\ref{finalsection}), we focus in this work on 
the ``box'' and ``diamond'' fragments of $\lgc{GK}$ and $\lgc{GK^F}$ 
based on restrictions of the language ${\mathcal L_{\mbx\dmd}}$ to the single modality 
sublanguages ${\mathcal L_\mbx}$ and ${\mathcal L_\dmd}$, respectively. These fragments are 
worthy of investigation since they already contain enough extra expressive power to 
deal with certain modal fuzzy notions. Indeed the general approach of~\cite{BEGR08,BEGR09} begins with a 
treatment of just the addition of the box operator, reflecting the fact that in (classical) modal logics, typically just 
one of the dual modalities is considered primitive.

We use proof-theoretic methods to establish decidability and complexity 
results for these fragments, and  completeness for the axiomatizations:

\begin{enumerate}[$\bullet$]

\item	$\lgc{HGK_\mbx}$ is $\lgc{HG}$ extended with
		\[\small
		\begin{array}{ll}
		(K_\mbx) & \mbx (A \to B) \to (\mbx A \to \mbx B)\\
		(Z_\mbx) & \lnot \lnot \mbx A \to \mbx \lnot \lnot A
		\end{array}
		\]
		\[\small
		\infer[\nec_\mbx]{\mbx A}{A}
		\]
		
\item	$\lgc{HGK_\dmd}$ is $\lgc{HG}$ extended with
		\[\small
		\begin{array}{ll}
		(K_\dmd) & \dmd (A \lor B) \to (\dmd A \lor \dmd B)\\
		(Z_\dmd)  & \dmd \lnot \lnot A \to  \lnot \lnot \dmd A\\
		(F_\dmd)  &	\lnot \dmd \bot
		\end{array}
		\]
		\[\small
		\infer[\nec_\dmd]{(\dmd A \to \dmd B) \lor \dmd C}{(A \to B) \lor C}
		\]
		
\item	$\lgc{HGK^F_\dmd}$ is $\lgc{HGK_\dmd}$ with $\nec_\dmd$ replaced by
		\[\small
		\infer[\nec^*_\dmd]{\dmd A \to \dmd B}{A \to B}
		\]

\end{enumerate}

It is proved in~\cite{CR08} that $\lgc{HGK_\mbx}$ is  complete with 
respect to the box fragments of both $\lgc{GK}$ and $\lgc{GK^F}$ (in other words, these fragments 
coincide and there is no need to define an alternative $\lgc{HGK^F_\mbx}$), and that $\lgc{HGK^F_\dmd}$ 
is complete with respect to the diamond fragment of 
$\lgc{GK^F}$. These results, plus completeness for $\lgc{HGK_\dmd}$ with respect to  the diamond 
fragment of $\lgc{GK}$, 
will follow from our proof-theoretic investigations, as will decidability and complexity results 
for all three fragments. First, however, we note the following interesting feature of 
G\"odel modal logics.

\begin{thm}
The box and diamond fragments of $\lgc{GK}$  do not have the finite model property.
\end{thm}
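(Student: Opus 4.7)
The plan is to prove each claim by exhibiting a formula $F$ in the fragment together with (i) an infinite $\lgc{GK}$-Kripke model in which $F$ is refuted and (ii) an argument that $F$ is valid in every finite $\lgc{GK}$-Kripke model. The guiding intuition is that in a finite model $V(\mbx A,x)$ is a minimum and $V(\dmd A,x)$ a maximum over successors, so these values are always attained, whereas in an infinite model they are only an $\inf$ or $\sup$ and need not be.

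For the box fragment I would take $F_\mbx \isdef \mbx \lnot\lnot p \to \lnot\lnot \mbx p$. The refuting Kripke model has $W = \{x\} \cup \{y_i : i \geq 1\}$, crisp relation $R = \{(x,y_i) : i \geq 1\}$, and $V(p,y_i) = 1/i$: at each successor $V(p,y_i) > 0$, hence $V(\lnot\lnot p, y_i) = 1$ and $V(\mbx \lnot\lnot p, x) = 1$, whereas $V(\mbx p, x) = \inf_i 1/i = 0$ gives $V(\lnot\lnot \mbx p, x) = 0$, so $V(F_\mbx, x) = 0$. For finite validity, observe that $V(\lnot\lnot A, -)$ is always $\{0,1\}$-valued, so it suffices to rule out the combination $V(\mbx \lnot\lnot p, x) = 1$ and $V(\lnot\lnot \mbx p, x) = 0$. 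The first equation forces $V(p,y) > 0$ at each of the finitely many successors $y$ of $x$, whence $V(\mbx p, x) = \min\{V(p,y) : Rxy\} > 0$ (reading the min of the empty set as $1$), contradicting the second.

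For the diamond fragment, the structural plan is the same but dualized: I would seek a formula $F_\dmd$ refuted by a model with successors $y_i$ and $V(p,y_i) = 1 - 1/i$, so that $V(\dmd p, x) = 1$ without any single successor attaining the value $1$, and then deduce finite validity from the fact that in a finite model $V(\dmd p, x) = \max_i V(p,y_i)$ is always attained. The main obstacle is finding the right $F_\dmd$: the naive dual $\lnot\lnot \dmd p \to \dmd \lnot\lnot p$ is in fact $\lgc{GK}$-valid, since any single successor with $V(p,y) > 0$ already forces $V(\dmd \lnot\lnot p, x) = 1$. A suitable $F_\dmd$ must therefore couple $\dmd$ with further propositional structure that makes the non-attainment of the supremum genuinely visible at the root; once chosen, the finite-validity check should proceed by a case analysis parallel to the box case, using that the relevant $\sup$ becomes a $\max$ in every finite $\lgc{GK}$-model.
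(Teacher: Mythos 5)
Your box half is correct and is essentially the paper's own argument: the same formula $\mbx\lnot\lnot p \to \lnot\lnot\mbx p$, the same infinite counter-model with successor values $1/i$ tending to $0$, and the same finite-validity argument via attainment of the minimum.

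The diamond half, however, has a genuine gap: you never exhibit the formula $F_\dmd$, and the theorem asserts failure of the finite model property for \emph{both} fragments, so without a concrete witness and its verification half of the statement remains unproven. Your diagnosis that the naive dual $\lnot\lnot\dmd p \to \dmd\lnot\lnot p$ is $\lgc{GK}$-valid is right, and your intuition that one must ``couple $\dmd$ with further propositional structure'' is exactly what is needed --- but identifying that structure is the entire content of this half of the proof, not a detail to be deferred. The paper's choice is
$B = (\dmd p \to \dmd q) \to ((\dmd q \to \bot) \lor \dmd(p\to q))$,
which uses a second variable $p$ held at the constant value $\tfrac12$ to make non-attainment of $\sup_y V(q,y)$ visible at the root: in the model with successors $y_i$, $V(p,y_i)=\tfrac12$ and $V(q,y_i)=\tfrac12 - \tfrac{1}{i+2}$, one gets $V(\dmd p,0)=V(\dmd q,0)=V(\dmd(p\to q),0)=\tfrac12$, hence $V(B,0)=\tfrac12$. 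Finite validity then follows by the attainment argument you sketch: pick a successor $y$ with $V(q,y)=V(\dmd q,x)$; if $V(p,y)>V(q,y)$ then $V(\dmd p,x)>V(\dmd q,x)$ and the antecedent gets value $\le V(\dmd q, x)$, while if $V(p,y)\le V(q,y)$ then $V(p\to q,y)=1$ and the second disjunct of the consequent is $1$. Note also that your proposed counter-model shape (a single variable with values $1-1/i$) would not by itself refute any formula of this kind; the constant comparison value supplied by $p$ is essential.
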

\proof
Following~\cite{CR08}, consider the  ${\mathcal L_\mbx}$-formula
\[
A = \mbx \lnot \lnot p \to \lnot \lnot \mbx p.
\]
$A$ is valid in all Kripke models for $\lgc{GK}$ with a finite number of worlds, but 
not in the Kripke model 
 $\langle \mathbb{N},R,V \rangle$ where $Rxy$ holds for all 
$x,y \in \mathbb{N}$ and $V(p,x) = 1/(x+1)$ for all $x \in \mathbb{N}$.

Now  consider the ${\mathcal L_\dmd}$-formula:
\[
B = (\dmd p \to \dmd q) \to ((\dmd q \to \bot) \lor \dmd (p \to q)).
\]
We claim first that $B$ is valid in all Kripke models for $\lgc{GK}$ with a finite number of worlds. 
Consider a world $x$. 
If no worlds are accessible to $x$, then $V(\dmd q,x) = 0$ and $V(B,x) = 1$. Otherwise, 
since the model is finite, we can choose an accessible world $y$ such that $V(\dmd q,x) = V(q,y)$. 
If $V(p,y) > V(q,y)$, then $V(\dmd p,x) > V(\dmd q, x)$, so also 
$V(B,x) = 1$. If $V(p,y) \le V(q,y)$, then
 $V(p \to q,y)=1$, so also $V(B,x) = 1$.

On the other hand, consider a Kripke model  $\langle \mathbb{N},R,V \rangle$ for 
$\lgc{GK}$ where $Rxy$ holds iff $x=0$ and $y \ge 1$, $V(p,x) = \frac{1}{2}$ and  $V(q,x) = \frac{1}{2} - \frac{1}{x+2}$ 
for all $x \in \mathbb{N}$. Then $V(\dmd q,0) = V(\dmd p,0) = V(\dmd (p \to q), 0) = \frac{1}{2}$, 
so $V(B,0) = \frac{1}{2}$. \qed

This contrasts with the following result of Caicedo and Rodr{\'i}guez.

\begin{thm}[\cite{CR08}]
 The diamond fragment of $\lgc{GK^F}$ has the finite model property. \qed
\end{thm}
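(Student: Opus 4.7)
The plan is to take an arbitrary $\lgc{GK^F}$-countermodel of $A \in \fml_{{\mathcal L}_\dmd}$ and extract a finite one by a selective unravelling. Fix $K = \langle W, R, V \rangle$ and $x_0 \in W$ with $V(A, x_0) < 1$, let $S$ be the finite set of subformulas of $A$, and let $d$ be the modal depth of $A$. The aim is to construct a finite $\lgc{GK^F}$-model $K' = \langle T, R', V' \rangle$ in which some formula computation again yields a value strictly below $1$ for $A$.

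First I would build a finite tree rooted at $x_0$ by induction on level. At each world $x$ of level below $d$ and for each subformula $\dmd B \in S$ with $V(\dmd B, x) > 0$, select one (or at most finitely many) witness(es) $y \in W$ whose value $\min(V(B, y), Rxy)$ approximates the supremum $V(\dmd B, x) = \sup_z \min(V(B, z), Rxz)$ as closely as needed, and add them as children of $x$. After $d$ levels, terminate: the resulting tree $T$ has at most $|S|^d$ nodes. Define $V'$ to agree with $V$ on propositional variables at nodes of $T$, and define the new fuzzy accessibility $R'$ by modifying $R$ on the selected edges so that the relevant suprema are exactly realised in the restricted finite structure.

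The hard part will be ensuring that the suprema defining $V(\dmd B, x)$, which need not be attained in $W$, can be made to hold on the nose in the finite tree. This is where the fuzzy accessibility is essential: for a target value $r = V(\dmd B, x)$ and a chosen witness $y$ with $V(B, y) \ge r$, one can set $R'xy = r$ to obtain $\min(V(B, y), R'xy) = r$ exactly, while setting $R'xy' \le r$ on all other selected edges to preserve the upper bound. When no witness satisfies $V(B, y) \ge r$, one appeals to Lemma~\ref{lemscaletwo} and rescales truth values by an order-automorphism of $[0,1]$ that collapses the finitely many critical values arising from the evaluations of subformulas of $A$ to a finite scale, so that sufficiently close $\epsilon$-approximations become exact and the strict inequality $V'(A, x_0) < 1$ we wish to preserve is unaffected.

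A routine induction on formula complexity, using that $S$ is closed under subformulas and that the chosen witness sets cover every top-level diamond encountered at each tree node, then confirms that $V'(B, x) = V(B, x)$ (up to the rescaling) for every $B \in S$ and every $x \in T$; in particular $V'(A, x_0) < 1$, so $K'$ is the desired finite $\lgc{GK^F}$-countermodel to $A$, establishing the finite model property.
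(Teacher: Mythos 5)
First, note that the paper does not prove this theorem at all: it is quoted from Caicedo and Rodr\'iguez~\cite{CR08}, so your proposal can only be measured against what a correct argument requires. Your overall strategy --- unravel the countermodel into a finite tree of depth equal to the modal depth of $A$, selecting finitely many approximate witnesses for each $\dmd B$ at each node, and exploit the fuzzy accessibility relation to realise the suprema exactly --- is the right idea, and the observation that capping $R'xy$ at $r=V(\dmd B,x)$ turns any witness with $V(B,y)\ge r$ into an exact one is precisely the point where crispness fails and fuzziness saves the day. But the execution has a genuine gap at the crucial step. When the supremum is not attained, there is typically \emph{no} witness with $V(B,y)\ge r$ (in the paper's own countermodel to $(\dmd p\to\dmd q)\to((\dmd q\to\bot)\lor\dmd(p\to q))$ one has $V(q,y)<1/2$ for every $y$ while $V(\dmd q,0)=1/2$), and your proposed fix --- ``appeal to Lemma~\ref{lemscaletwo} and rescale by an order-automorphism that collapses the critical values so that $\epsilon$-approximations become exact'' --- does not work as stated: an order-automorphism of $[0,1]$ is injective and preserves strict inequalities, and Lemma~\ref{lemscaletwo} applies one \emph{global} automorphism to the whole model, moving $V(B,y)$ and $r$ in lockstep; it can neither collapse distinct values nor change the relative position of a witness value and its target. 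What is actually needed is to apply \emph{different} automorphisms to the pairwise disjoint subtrees rooted at the distinct children (legitimate only because the unravelling makes them disjoint generated submodels), pushing each witness's value of $B$ up past $r$ before capping the edge --- and even this cannot reach $r=1$ from below, so the case $V(\dmd B,x)=1$ with a non-attained supremum needs a separate device in the spirit of Lemma~\ref{lemltmodal}.

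A second, related gap: the edge weight $R'xy=V(\dmd B,x)$ chosen for the witness of $\dmd B$ also contributes $\min(V'(B',y),R'xy)$ to $V'(\dmd B',x)$ for every \emph{other} diamond subformula $\dmd B'$, and the per-subtree rescaling that raises $V(B,y)$ may simultaneously push $V(B',y)$ above $V(\dmd B',x)$; your remark about ``setting $R'xy'\le r$ on all other selected edges'' is wrong as written, since those edges witness other formulas $\dmd B'$ with possibly $V(\dmd B',x)>r$. Consequently the concluding ``routine induction'' establishing $V'(B,x)=V(B,x)$ for all subformulas cannot go through in that form: exact value preservation is generally unattainable, and the invariant you actually need to formulate and maintain is preservation of the \emph{order type} of the (finitely many) subformula values at each node, including their coincidences with $0$ and $1$, with the approximation tolerance $\epsilon$ at each level chosen small enough relative to the finitely many gaps between these values. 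Until that invariant is stated and the two interactions above are controlled, the proof is not complete.
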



\subsection{Sequents of Relations} 

We extend the definition of validity for sequents of relations from $\lgc{G}$ to 
$\lgc{L} \in \{\lgc{GK},\lgc{GK^F}\}$ as follows
 \begin{center}
\begin{tabular}{rcl}
$\mdl{L} \sor$ 	& \quad iff \quad 	& for all Kripke models  $\langle W,R,V \rangle$ for $\lgc{L}$ and $x \in W$,\\ 
				&			& $V(A,x) \ltoe V(B,x)$ for some $(A \ltoe B) \in \sor$
\end{tabular}
\end{center}
and say in this case that $\sor$ is \emph{$\lgc{L}$-valid}.

We call a sequent of relations \emph{propositional} if it contains no occurrences of modalities, and 
identify the {\em modal part} of a sequent of relations $\sor$ as the relations in $\sor$ made up of box formulas, diamond 
formulas, and constants, calling $\sor$ {\em purely modal} if it coincides with its modal part. 
We say that a formula \emph{occurs} in a sequent of relations if it occurs as 
the left or right side of one of the relations. A sequent of relations is said to be \emph{propositionally $\lgc{L}$-valid} for 
$\lgc{L} \in \{\lgc{GK},\lgc{GK^F}\}$ 
if the sequent of relations obtained by replacing each occurrence of a box formula $\mbx A$ with a new variable $p_A$ 
and diamond formula $\dmd A$ with a new variable $q_A$ is $\lgc{G}$-valid.

It will also be helpful to adopt the following notation for sets of relations:
\[
\begin{array}{rcl}
[A_1,\ldots,A_n] \ltoe B 	& \isdef & A_1 \ltoe B \h \ldots \h A_n \ltoe B\\[0in]
[ ] \le B					& \isdef & \top \le B\\[0in]
[ ] < B					& \isdef & \emptyset\\[0in]
A \ltoe [B_1,\ldots,B_m] 	& \isdef & A \ltoe B_1 \h \ldots \h A \ltoe B_m\\[0in]
A \le []					& \isdef & A \ltoe \bot\\[0in]
A < []					& \isdef & \emptyset.
\end{array}
\]
Note that we always restrict expressions $\Ga \ltoe \De$ to cases where either $\Ga$ or $\De$ 
has at most one element. We remark, moreover, that $\Ga$ and $\De$ can also be considered sets of 
formulas rather than multisets without changing the meaning of the notation.

\begin{exa}
For instance,
\[
[\mbx (p \to q), r \to \mbx \bot] \le \mbx p \h [] \le \mbx (p \land q) \h \mbx (p \to \bot) < [p,r]
\]
stands for the sequent of relations
\[
\mbx (p \to q) \le \mbx p \h  r \to \mbx \bot \le \mbx p\h \top \le \mbx (p \land q) \h \mbx (p \to \bot) < p \h \mbx (p \to \bot) < r.
\]
\end{exa}

The logical rules, $\comr$, $\wlr$, and $\wrr$ are $\lgc{L}$-invertible for 
$\lgc{L} \in \{\lgc{GK},\lgc{GK^F}\}$. Hence applying the logical rules upwards to an $\lgc{L}$-valid sequent of relations 
terminates  with $\lgc{L}$-valid sequent of relations containing only modal formulas and atoms. 
Since sequents of relations are sets of pairs of formulas, there is a 
finite number that can be obtained by applying the rules backwards to any given sequent of relations. Hence:

\begin{lem} \label{lemsaturated}
Every sequent of relations $\sor$ is derivable from a set of saturated sequents of relations $\{\sor_1,\ldots,\sor_n\}$ 
using the logical rules, $\comr$, $\csr$, $\wlr$, and $\wrr$; moreover, $\sor$ is $\lgc{L}$-valid for $\lgc{L} \in \{\lgc{GK},\lgc{GK^F}\}$ 
iff $\sor_i$ is $\lgc{L}$-valid for $i = 1 \ldots n$. \qed
\end{lem}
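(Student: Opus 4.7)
The plan is to construct $\{\sor_1,\ldots,\sor_n\}$ by exhaustive backward rule application from $\sor$ in two phases, and then to transport $\lgc{L}$-validity up and down the resulting derivation tree using the $\lgc{L}$-soundness and $\lgc{L}$-invertibility of each rule. First I would apply the logical rules upward exhaustively. Each such application replaces a principal connective by its immediate subformulas, so the total logical complexity strictly decreases, and this phase terminates with sequents in which every occurring formula is either an atom or a modal formula $\mbx A$ or $\dmd A$.

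In the second phase, I would saturate under the structural rules $\comr$, $\csr$, $\wlr$, and $\wrr$. None of these rules, read upward, introduces a formula outside the existing pool of atoms, modal formulas, and constants: $\wlr$ and $\wrr$ just pair an existing formula with $\top$ or $\bot$, $\csr$ introduces only relations between constants, and $\comr$ combines formulas already occurring on opposite sides of existing relations. Since sequents of relations are finite sets of triples drawn from this now-fixed finite pool, only finitely many distinct relations can ever be added. Every upward application either enlarges the sequent by at least one genuinely new relation or becomes trivial (premise equal to conclusion), so the tree is finite; its leaves, saturated by construction, form the desired $\sor_1,\ldots,\sor_n$.

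The validity claim then follows by induction on this tree. The $\lgc{L}$-soundness of each rule used yields that $\lgc{L}$-validity of every leaf $\sor_i$ implies $\lgc{L}$-validity of $\sor$. For the converse, I would invoke the $\lgc{L}$-invertibility of the logical rules, $\comr$, $\wlr$, and $\wrr$ noted immediately before the lemma, together with the trivial invertibility of $\csr$: its premise differs from the conclusion only by the relations $\top \le \bot$ and $\bot < \bot$, which fail at every world in every Kripke model, so any valuation witnessing non-validity of the conclusion also witnesses non-validity of the premise.

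The main obstacle is justifying termination of the second phase in the presence of the branching rule $\comr$, whose backward applications enlarge the sequent on each branch and at first glance seem capable of growing the search tree indefinitely. The resolution is the finiteness of the formula pool after phase one: with that pool frozen, the set of all possible relations $A \ltoe B$ with $A,B$ from the pool and $\ltoe \in \{\le,<\}$ is finite, so backward saturation must eventually exhaust them and halt.
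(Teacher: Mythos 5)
Your argument is correct and takes essentially the same route as the paper, which states the lemma without a separate proof and relies on exactly the observations you spell out: termination of backward rule application because sequents of relations are finite \emph{sets} drawn from a fixed finite pool of formulas, and transfer of $\lgc{L}$-validity in both directions via the $\lgc{L}$-soundness and $\lgc{L}$-invertibility of the rules. Your explicit remark that \csr{} is trivially invertible (since $\top \le \bot$ and $\bot < \bot$ hold at no world) supplies the one small detail the paper leaves tacit.
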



\section{The Box Fragment}

\subsection{The Sequent of Relations Calculus $\lgc{SGK_\mbx}$} 

For convenience, let us assume for this section that all notions (formulas, rules, etc.) refer 
exclusively to the sublanguage ${\mathcal L}_\mbx$. We define the calculus $\lgc{SGK_\mbx}$ as the 
extension of $\lgc{SG}$ (for ${\mathcal L}_\mbx$) with the modal rule
\[\small
\begin{array}{c}
\infer[\mmr]{\sor \h \mbx \Ga \le \mbx B \h \mbx \Pi \le \bot}{\Ga \le B \h \Pi \le \bot} 
\end{array}
\]
We note that it will emerge that this calculus is sound and complete for both $\lgc{GK}$ and 
$\lgc{GK^F}$, and hence that the box fragments of these logic coincide.

\begin{exa}
Consider the following derivation of a sequent of relations corresponding to the 
axiom $\mbx(p \to q) \to (\mbx p \to \mbx q)$:
\[
\small
\infer[{(\le\to)}]{\top \le \mbx(p \to q) \to (\mbx p \to \mbx q)}{
 \infer[{\ewr}]{\mbx (p \to q) \le \mbx p \to \mbx q \h \top \le \mbx p \to \mbx q}{
  \infer[{(\le\to)}]{\mbx (p \to q) \le \mbx p \to \mbx q}{
   \infer[{\mmr}]{\mbx p \le \mbx q \h  \mbx( p \to q) \le \mbx q}{
    \infer[{(\to\le)}]{p \le q \h p \to q \le q}{
     \infer[{\ewr}]{p \le q \h \top \le q \h q < p}{
      \infer[{\comr}]{p \le q \h q < p}{
       \infer[{\idr}]{p \le q \h q < p \h p \le p}{} &
       \infer[{\idr}]{p \le q \h q < p \h q \le q}{}}} &
     \infer[{\idr}]{p \le q \h q \le q}{}}}}}}
\]
\end{exa}

We prove soundness with respect to Kripke models for $\lgc{GK^F}$, recalling that these 
include the Kripke models for $\lgc{GK}$ as a special case.

\begin{thm} \label{thmsoundsgk}
If $\der{SGK_\mbx} \sor$, then $\mdl{GK^F} \sor$.
\end{thm}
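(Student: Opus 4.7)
The plan is to argue by induction on the height of the derivation, showing that every rule of $\lgc{SGK_\mbx}$ preserves $\lgc{GK^F}$-validity. The axiom $\idr$ is immediate from $V(A,x)\le V(A,x)$, and $\csr$ is sound because its extra relations $\top\le\bot$ and $\bot<\bot$ never hold in any Kripke model. The structural rules $\wlr$, $\wrr$, $\ewr$, $\comr$ and the propositional logical rules are all ``world-local'': at each fixed world $x$ of a Kripke model for $\lgc{GK^F}$, modal subformulas simply receive some value in $[0,1]$, and the propositional connectives are interpreted by the G\"odel truth functions, so $V(\cdot,x)$ behaves exactly as a $\lgc{G}$-valuation. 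Consequently the $\lgc{G}$-soundness of these rules in $\lgc{SG}$ transfers directly.

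The substantive case is the modal rule $\mmr$. Assume the premise $\Ga\le B\h\Pi\le\bot$ is $\lgc{GK^F}$-valid, and suppose for contradiction some Kripke model $\langle W,R,V\rangle$ for $\lgc{GK^F}$ and some $x\in W$ falsifies every relation in $\sor\h\mbx\Ga\le\mbx B\h\mbx\Pi\le\bot$. Then no relation in $\sor$ holds at $x$; one has $V(\mbx A_i,x)>V(\mbx B,x)$ for each $A_i\in\Ga$ (and $1>V(\mbx B,x)$ if $\Ga=[]$); and $V(\mbx C_j,x)>0$ for each $C_j\in\Pi$. In particular $V(\mbx B,x)<1$. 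Choose $\epsilon>0$ with $V(\mbx B,x)+\epsilon$ strictly below $1$, below each $V(\mbx A_i,x)$, and below each $V(\mbx C_j,x)$. Since $V(\mbx B,x)=\inf_{y\in W}(Rxy\to_\lgc{G} V(B,y))$, pick a witness $y^*$ with $Rxy^*\to_\lgc{G} V(B,y^*)<V(\mbx B,x)+\epsilon<1$. Because this value is strictly less than $1$, we must be in the case $Rxy^*>V(B,y^*)$, giving both $V(B,y^*)=Rxy^*\to_\lgc{G} V(B,y^*)<V(\mbx B,x)+\epsilon$ and $Rxy^*>0$.

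The plan is then to exhibit $y^*$ as a falsifier of the premise. For each $A_i\in\Ga$, $Rxy^*\to_\lgc{G} V(A_i,y^*)\ge V(\mbx A_i,x)>V(\mbx B,x)+\epsilon>V(B,y^*)$; a short two-case analysis on whether $Rxy^*\le V(A_i,y^*)$ or not (using $Rxy^*>V(B,y^*)$ to rule out the degenerate subcase) yields $V(A_i,y^*)>V(B,y^*)$, so $A_i\le B$ fails at $y^*$; the $\Ga=[]$ subcase follows from $V(B,y^*)<1=V(\top,y^*)$. For each $C_j\in\Pi$, from $Rxy^*\to_\lgc{G} V(C_j,y^*)\ge V(\mbx C_j,x)>0$ together with $Rxy^*>0$ one obtains $V(C_j,y^*)>0$, so $C_j\le\bot$ fails at $y^*$. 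This contradicts the $\lgc{GK^F}$-validity of the premise.

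The main obstacle is the soundness of $\mmr$, specifically handling the possibly unattained infimum defining $V(\mbx B,x)$ and juggling the two-valued nature of the G\"odel residuum. The key observation making the argument go through is that the strict inequality $Rxy^*\to_\lgc{G} V(B,y^*)<1$ forces $Rxy^*>0$, which is exactly what is needed to transfer the positivity of $V(\mbx C_j,x)$ down to positivity of $V(C_j,y^*)$; without this the $\mbx\Pi\le\bot$ part of the rule could not be recovered at the witness world.
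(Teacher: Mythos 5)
Your proof is correct and follows essentially the same route as the paper's: induction on the height of the derivation, with the only substantive case being $\mmr$, handled contrapositively by locating a witness world $y^*$ where $Rxy^*\to_\lgc{G} V(B,y^*)$ lies strictly below $1$, so that $Rxy^*>V(B,y^*)\ge 0$ and the failures of $\Ga\le B$ and $\Pi\le\bot$ transfer down to $y^*$. The paper first reduces to singleton $\Ga$ and $\Pi$ via the $\lgc{GK^F}$-equivalence of $(\mbx A_1 \to B) \lor (\mbx A_2 \to B)$ and $\mbx (A_1 \land A_2) \to B$, whereas you treat the multisets directly with an explicit $\epsilon$; this is only a difference in bookkeeping, and your version is if anything more explicit about the unattained infimum and the case analysis of the residuum. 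One small slip: the requirement that $V(\mbx B,x)+\epsilon$ lie strictly below each $V(\mbx C_j,x)$ cannot always be met (nothing prevents $V(\mbx C_j,x)\le V(\mbx B,x)$), but since that condition is never used afterwards --- the $\Pi$ part only needs $V(\mbx C_j,x)>0$ and $Rxy^*>0$ --- you should simply delete it.
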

\proof
The proof is a  standard induction on the height of a derivation in $\lgc{SGK_\mbx}$. Let us just 
check that the rule $\mmr$ preserves validity in Kripke models for $\lgc{GK^F}$.  Note first that it 
suffices, using the equivalence in $\lgc{GK^F}$ of $(\mbx A_1 \to B) \lor (\mbx A_2 \to B)$ and 
$\mbx (A_1 \land A_2) \to B$, to check that $\mdl{GK^F} A \le B \h C \le \bot$ implies  $\mdl{GK^F} \mbx A \le \mbx B 
\h \mbx C \le \bot$. Suppose, contrapositively, that $\not \mdl{GK^F} \mbx A \le \mbx B 
\h \mbx C \le \bot$. I.e., for some Kripke model $K = \langle W,R,V \rangle$ for $\lgc{GK^F}$ and 
$x \in W$: $V(\mbx A,x) > V(\mbx B,x)$ and $V(\mbx C,x) > 0$. So for some  $y \in W$: 
$Rxy \to_\lgc{G} V(A,y) > Rxy \to_\lgc{G} V(B,y)$ and $Rxy \to_\lgc{G} V(C,y) > 0$. 
But this implies that $V(A,y) > V(B,y)$ and $V(C,y) > 0$. So $\not \mdl{GK^F} A \le B \h C \le \bot$ 
as required. 
\qed

Completeness is of course more complicated. The challenge is to show that a $\lgc{GK}$-valid saturated sequent of relations 
$\sor$ is derivable in $\lgc{SGK_\mbx}$. This will  also show that a $\lgc{GK^F}$-valid saturated sequent of relations 
$\sor$ is derivable in $\lgc{SGK_\mbx}$. Our strategy will be to use Lemmas \ref{lempropvalid} and \ref{lemle} to show that \emph{either} 
$\sor$ is derivable in $\lgc{SG}$ \emph{or} the modal part of $\sor$ is itself $\lgc{GK}$-valid. For the latter case,  $\sor$ is derivable 
using $\mmr$ from a less complex sequent of relations shown to be $\lgc{GK}$-valid in Lemma~\ref{lemmodal}. An inductive argument 
will then complete the proof.

\begin{lem} \label{lempropvalid}
If $\sor$ is saturated and $\lgc{GK}$-valid, then either $\sor$ is propositionally valid or 
the modal part of $\sor$ is $\lgc{GK}$-valid.
\end{lem}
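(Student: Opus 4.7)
The plan is to prove the contrapositive: I assume $\sor$ is saturated, $\mdl{GK} \sor$, that $\sor$ fails to be propositionally valid, and that the modal part $\sor_m$ fails to be $\lgc{GK}$-valid; from these hypotheses I will build a Kripke model that falsifies $\sor$, contradicting $\mdl{GK} \sor$.

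First, I extract two partial witnesses. Non-propositional-validity supplies a $\lgc{G}$-valuation $v$ on the propositional language obtained from $\sor$ by replacing every box formula $\mbx A$ by a fresh variable $p_{\mbx A}$, such that no relation of $\sor$ is satisfied under $v$. Non-$\lgc{GK}$-validity of $\sor_m$ supplies a Kripke model $K_0 = \lan W_0, R_0, V_0 \ran$ and $x_0 \in W_0$ at which no relation of $\sor_m$ holds. Both the assignment $\mbx A \mapsto v(\mbx A)$ and the assignment $\mbx A \mapsto V_0(\mbx A, x_0)$ are then falsifiers of $\sor_m$.

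Second, I align the two witnesses via saturation. The key observation is that saturation under $\comr$ forces, for every pair of relations of $\sor_m$, one of the ``crossed'' relations back into $\sor_m$, so every falsifier of the saturated sequent $\sor_m$ must respect a common ordering (strict or loose) on the modal formulas and constants appearing there. Consequently $\{V_0(\mbx A, x_0)\}$ and $\{v(\mbx A)\}$, ranging over $\mbx A$ in $\sor_m$, induce the same ordering on $[0,1]$, and Lemma~\ref{lemscaletwo} lets me apply a suitable order-automorphism $h$ of $[0,1]$ to $K_0$, producing $K_0' = \lan W_0, R_0', V_0' \ran$ with $V_0'(\mbx A, x_0) = v(\mbx A)$ for each such $\mbx A$. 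I then combine the two by attaching a fresh world $x^*$ to $K_0'$ whose outgoing accessibility mirrors $x_0$'s and on which $V'(a, x^*) = v(a)$ for every atom $a$; automatically $V'(\mbx A, x^*) = V_0'(\mbx A, x_0) = v(\mbx A)$ for every $\mbx A \in \sor_m$. For any ``orphan'' box formula of $\sor$ not mentioned in $\sor_m$, I adjoin extra accessible worlds off $x^*$ tailored to push $V'(\mbx A, x^*)$ to the value $v(\mbx A)$ while preserving the already fixed modal values. At $x^*$ every atom and every box formula of $\sor$ then takes exactly its $v$-value, so no relation of $\sor$ is satisfied there, contradicting $\mdl{GK} \sor$.

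The main obstacle will be the alignment step: showing that saturation under $\comr$ really does force two arbitrary falsifiers of $\sor_m$ to induce the same ordering on its modal formulas and constants, so that a single order-automorphism $h$ transforms one into the other. A secondary technical concern is the treatment of orphan box formulas, which $\sor_m$ constrains only indirectly; here I expect to exploit further consequences of saturation (coming from $\comr$ applied to mixed and atomic relations of $\sor$) together with the flexibility in designing additional accessible worlds off $x^*$ to independently realize the required $v$-values.
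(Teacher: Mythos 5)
There is a genuine gap at exactly the step you flag as the main obstacle: the alignment of the two falsifiers. Saturation under \comr{} does \emph{not} force all falsifiers of $\sor_m$ to induce the same ordering on the values of its modal formulas. \comr{} only guarantees that for each \emph{pair of relations already in} $\sor_m$ one of the two crossed relations is present; it does not linearly order the values of formulas that are not directly compared. For instance, a \comr-, \csr-, \wlr-, \wrr-saturated modal part containing $\mbx A \le \mbx B$, $\mbx C \le \mbx D$ and $\mbx A \le \mbx D$ (together with the forced relations $\top \le \mbx B$, $\top \le \mbx D$, $\mbx A \le \bot$, $\mbx C \le \bot$, $\top\le\bot$, $\bot<\bot$) is falsified both by $(\mbx A,\mbx B,\mbx C,\mbx D)\mapsto(0.9,0.1,0.8,0.05)$ and by $(0.9,0.85,0.8,0.05)$, which order $\mbx B$ and $\mbx C$ oppositely; no order-automorphism of $[0,1]$ carries one assignment to the other (and automorphisms also fix $0$ and $1$, creating a further endpoint mismatch you never address). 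So the order-automorphism $h$ you need from Lemma~\ref{lemscaletwo} need not exist, and the construction of $x^*$ collapses. The ``orphan'' repair is also problematic: adjoining accessible worlds can only \emph{lower} the infimum defining $V'(\mbx A,x^*)$, and any new world perturbs the values of \emph{all} box formulas at $x^*$ simultaneously, so ``pushing'' one orphan to its target value while ``preserving the already fixed modal values'' is not something you can do freely.

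The paper's proof avoids the alignment problem entirely by reversing the order of quantification: it first fixes the values of \emph{all} box formulas occurring in $\sor$ (not just those in $\sor_m$) as fresh constants $c_A$ with $r_{c_A}=V(\mbx A,x)$ read off from the modal counter-model, and then proves --- and this is the real content of the lemma --- that the resulting propositional sequent $\sor^P$ \emph{with those specific constant values} is still not $\lgc{G}$-valid. That argument goes through Lemma~\ref{lem:saturatedvalid} (the characterization of saturated $\lgc{G}$-valid atomic sequents with constants) together with the \wlr/\wrr{} saturation conditions, and it produces a propositional falsifier that is \emph{already} aligned with the modal counter-model; the gluing of a new world $x_0$ with the same successors as $x$ then works immediately, with no orphans left over. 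Your proposal is missing precisely this existence argument for an aligned propositional falsifier; without it, having two independent falsifiers is not enough.
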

\proof
Proceeding contrapositively, suppose that a saturated sequent of relations $\sor$ is not propositionally valid and the modal part 
$\sor_\mbx$ of $\sor$ is not $\lgc{GK}$-valid. Hence for some Kripke model $K = \langle W,R,V \rangle$ for 
$\lgc{GK}$ and $x \in W$: 
$V(A,x) \rtoe V(B,x)$ for each $(A \ltoe B) \in \sor^\mbx$. For each $\mbx A$ occurring in $\sor$, add a constant 
$c_A$ to the language so that $r_{c_A} = V(\mbx A, x)$. 
Let $\sor^P$ be $\sor$ with each $\mbx A$ occurring in $\sor$ replaced by $c_A$.\\[.1in]
\emph{Claim:} $\sor^P$ is not $\lgc{G}$-valid.\\[.1in]
Observe that the result follows from this claim. Let $v : \fml_{\mathcal{L}_{\lgc{G}}} \to [0,1]$ be the propositional counter-valuation for 
$\sor^P$.  Define $K' = \langle W \cup \{x_0\},R',V' \rangle$ where:
\begin{enumerate}[(1)]
\item	$R' = R \cup \{(x_0,y) \mid (x,y) \in R\}$
\item	$V'$ is $V$ extended with $V'(p,x_0) = v(p)$ for each $p \in \var$.
\end{enumerate}
Then $V'(\mbx A,x_0) = V(\mbx A,x) = r_{c_A}$ for all $\mbx A$ occurring in $\sor$. 
So, since $v$ is a counter-valuation for $\sor^P$, we have that $\sor$ is not $\lgc{GK}$-valid.\\[.1in]
\emph{Proof of claim.} 
Suppose that  $\sor^P$ is $\lgc{GK}$-valid. 
Then since $\sor^P$ is saturated, by Lemma~\ref{lem:saturatedvalid}, 
$\sor^P$ contains either $(a \le a)$, or $(c \ltoe d)$ for constants $c,d$ such that $r_C \ltoe r_D$; i.e., 
we have one of the following situations:
\begin{enumerate}[(i)]
\item	$\sor^P$ contains $a \le a$ or $\bot \le a$ or $a \le \top$ or $\bot < \top$. But then $\sor$ is propositionally 
		valid, a contradiction. 
\item	$\sor^P$ contains  $c_C \ltoe c_D$ or $c_C < \top$ or $\bot < c_D$. But since $\sor_\mbx$ is not $\lgc{L}$-valid, we must 
		have, respectively that $r_{c_C} \rtoe r_{c_D}$ or $r_{c_C} = 1$ or $0 = r_{c_D}$, a contradiction.
\item	$\sor^P$ contains $a \le c_D$ and $r_{c_D} = 1$. But then by $\wlr$, $\sor$ contains 
		$\top \le \mbx D$ and $r_{c_D} < 1$, a contradiction.
\item	$\sor^P$ contains $c_C \le a$ and $r_{c_C} = 0$. But then by $\wrr$, $\sor$ contains 
		$\mbx C \le \bot$ and $r_{c_C} > 0$, a contradiction. \qed
\end{enumerate}

\begin{lem} \label{lemle}
Let $\sor \h \sor'$ be saturated, purely modal, and $\lgc{GK}$-valid, where $\sor'$ consists only of relations of the form $A<B$. 
Then $\sor$ is  $\lgc{GK}$-valid.
\end{lem}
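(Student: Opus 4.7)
I will argue the contrapositive: suppose there is a Kripke model $K = \langle W, R, V \rangle$ for $\lgc{GK}$ and a world $x \in W$ such that every relation in $\sor$ fails at $x$; from this I will construct a countermodel to $\sor \h \sor'$, contradicting its $\lgc{GK}$-validity. If $V(A,x) \ge V(B,x)$ already holds for every $(A < B) \in \sor'$, then $K,x$ itself witnesses the contradiction, so I may assume there is a \emph{critical} $(A < B) \in \sor'$, meaning $V(A,x) < V(B,x)$. The plan is to apply Lemma~\ref{lemscaleone} with a carefully chosen $\lam \in (0,1]$ so that the rescaled valuation $V'$ still falsifies $\sor$ at $x$ while collapsing every critical strict inequality.

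The saturation hypothesis supplies the bounds needed to pick $\lam$. For each $(A < B) \in \sor'$ and each $(C \le D) \in \sor$, $\comr$-saturation of $\sor \h \sor'$ forces $(A \le D) \in \sor \h \sor'$; since $\sor'$ contains only strict relations, $(A \le D)$ must actually lie in $\sor$, and its failure at $x$ gives $V(A,x) > V(D,x)$. Analogously, $\csr$-saturation places $(\top \le \bot)$ in $\sor \h \sor'$, hence in $\sor$, and $\comr$-saturation of $(\top \le \bot)$ with each $(A < B) \in \sor'$ forces $(A \le \bot) \in \sor$, whose failure yields $V(A,x) > 0$. Defining
\[
\alp = \min\{V(A,x) : (A<B) \in \sor' \text{ critical}\}\quad \text{and}\quad \bet = \max(\{V(D,x) : (C \le D) \in \sor\} \cup \{0\}),
\]
these inferences give $\alp > 0$ and $\alp > \bet$, so the interval $(\bet,\alp] \subseteq (0,1]$ is nonempty.

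Pick any $\lam \in (\bet,\alp]$ and let $K' = \langle W, R, V' \rangle$ with $V'(p,y) = \lam \to_\lgc{G} V(p,y)$ for each variable $p$ and $y \in W$. By Lemma~\ref{lemscaleone}, $V'(F,y) = \lam \to_\lgc{G} V(F,y)$ for every formula $F$, in particular for every $\mbx$-formula and every constant appearing in $\sor \h \sor'$. Now a mechanical case check shows that $K',x$ falsifies every relation in $\sor \h \sor'$. For $(A \le B) \in \sor$, $V(B,x) \le \bet < \lam$ gives $V'(B,x) = V(B,x)$, and a split on whether $V(A,x) \ge \lam$ preserves the strict gap $V'(A,x) > V'(B,x)$. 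For $(A < B) \in \sor$, monotonicity of $\lam \to_\lgc{G}$ in its right argument preserves $V'(A,x) \ge V'(B,x)$. For non-critical $(A<B) \in \sor'$ the same monotonicity works; for critical $(A<B) \in \sor'$, $V(A,x) \ge \alp \ge \lam$ forces $V'(A,x) = 1$, so the strict inequality collapses. This contradicts the $\lgc{GK}$-validity of $\sor \h \sor'$.

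The principal obstacle is precisely the extraction of $\alp > \bet > 0$: without this strict separation, no admissible $\lam$ exists and the rescaling cannot simultaneously preserve $\sor$'s failures and falsify $\sor'$. What makes the separation available is the interaction of three ingredients — pure modality (so Lemma~\ref{lemscaleone} lifts uniformly to every relevant formula), saturation under $\comr$ and $\csr$, and the asymmetry that $\sor'$ holds only $<$-relations, which channels every $\le$-relation produced by the saturation rules into $\sor$ itself and converts them into the strict numerical bounds on $V(A,x)$ and $V(D,x)$.
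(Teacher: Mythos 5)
There is a genuine gap, and it sits exactly where the lemma is hard. Your extraction of the separation $\alp > \bet$ misreads what $\comr$-saturation gives you: $\comr$ has \emph{two} premises, so saturation applied to a pair $(A<B)\in\sor'$ and $(C\le D)\in\sor$ yields a disjunction --- \emph{either} $(A\le D)$ is in the sequent \emph{or} $(C\le B)$ is --- and you silently keep only the first disjunct. If instead $(C\le B)\in\sor$, the countermodel for $\sor$ gives $V(C,x)>V(B,x)$ and says nothing about $V(A,x)$ versus $V(D,x)$; worse, $(C\le B)$ is itself a $\le$-relation of $\sor$ with right-hand side $B$, so $\bet\ge V(B,x)>V(A,x)\ge\alp$ and your interval $(\bet,\alp]$ is empty. (The derivation of $\alp>0$ has the same flaw: saturation against $(\top\le\bot)$ only yields $(\top\le B)\in\sor$ or $(A\le\bot)\in\sor$.) This is not a repairable bookkeeping slip. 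Take $V(A,x)=0.3<V(B,x)=0.5$ and $(C\le D)\in\sor$ with $V(C,x)=0.9>V(D,x)=0.8$; saturation is then necessarily witnessed by $(C\le B)\in\sor$, which is consistent with the countermodel. Collapsing $A<B$ forces $\lam\le 0.3$, but then $V'(C,x)=V'(D,x)=1$ and the failure of $(C\le D)$ is destroyed; any $\lam>0.3$ leaves $V'(A,x)<V'(B,x)$. No single rescaling of the given model can work.

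The paper's proof keeps the disjunction as a real case split. When every $\le$-relation resolves your way (which is forced when $V(\mbx B,x)=1$, since $(C\le\mbx B)\in\sor$ would require $V(C,x)>1$), it rescales with Lemma~\ref{lemscaleone} essentially as you do. In the remaining case $V(\mbx B,x)<1$ it abandons rescaling-in-place and instead builds a \emph{new} model from countably many copies of $K$, each deformed by an order-automorphism (Lemma~\ref{lemscaletwo}) that squeezes the interval $(V(A,x),V(\mbx B,x)]$ to within $1/i$ of $V(A,x)$, all glued under a fresh root $x_0$; there the value of $\mbx B$ becomes a non-attained infimum equal to $V'(A,x_0)$, while $(\star\star)$ guarantees the $\le$-relations of $\sor$ still fail strictly. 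That infinitary step is the essential content of the lemma --- it is the same phenomenon behind the failure of the finite model property for the box fragment --- and your argument has no counterpart to it. (The paper also removes the strict relations one at a time via the weaker notion of semi-saturation rather than all at once; that difference is cosmetic by comparison.)
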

\proof
Let us define $\comr'$ as $\comr$ restricted to instances where $\ltoe_1$ is $\le$, and 
say that a  sequent of relations $\sor$ is {\em semi-saturated} if whenever $\sor$ occurs 
as the conclusion of a logical rule, $\comr'$, $\csr$, $\wlr$, or $\wrr$, then $\sor$ also 
occurs as one of the premises. It is then sufficient to prove the following:\\[.1in]
{\em Claim.} If $\sor \h A < \mbx B$ is semi-saturated, purely modal, and $\lgc{GK}$-valid, then $\sor$ is  $\lgc{GK}$-valid.\\[.1in]
We just notice that if $\sor \h A < \mbx B$ is semi-saturated, then $\sor$ is also semi-saturated. Hence we can 
apply the claim repeatedly, noting that the case where $\bot$ appears on the right is trivial and when $\top$ 
appears on the right, we can replace $\top$ with $\mbx \top$ and apply the claim.\\[.1in]
{\em Proof of claim.} 
Proceeding contrapositively, suppose that $\not \mdl{GK} \sor$. Then there is a Kripke model $K = \langle W,R,V \rangle$ 
for $\lgc{GK}$ and $x \in W$ 
such that $V(C,x) \rtoe V(D,x)$ for all $(C \ltoe D) \in \sor$. Moreover, if $V(A,x) \ge V(\mbx B,x)$, then  
$\not \mdl{GK} \sor \h A < \mbx B$ as required, so assume that
\begin{enumerate}[(11)]
\item[($\star$)] $V(A,x) < V(\mbx B,x)$.
\end{enumerate}
Since $\sor \h A < \mbx B$ is semi-saturated, for each  $(C \le D) \in \sor$:\\[.1in]
\begin{tabular}{rcl}
\emph{either} 	& \quad & $(C \le \mbx B) \in \sor$ and so $V(C,x) > V(\mbx B,x)$ \\[.05in]
\emph{or}		& & $(A \le D) \in \sor$ and so $V(A,x) > V(D,x)$.
\end{tabular}\\[.1in]
In particular:
\begin{enumerate}[(11)]
\item[($\star\star$)] $V(A,x) \le V(D,x) < V(C,x) \le V(\mbx B,x)$ is not possible.
\end{enumerate}
We have two cases. 	If $V(\mbx B,x) = 1$, then from the above either/or distinction, 
for each  $(C \le D) \in \sor$: $(A \le D) \in \sor$ and $V(A,x) > V(D,x)$. Hence also, since 
$\sor$ is non-empty (containing at least $\top \le \bot$), $V(A,x) > 0$. 
Now, using Lemma~\ref{lemscaleone}, we obtain a Kripke model $K' = \langle W,R,V'\rangle$ for $\lgc{GK}$ such that  
$V'(C,y) = V(A,x) \to_\lgc{G} V(C,y)$ for all $y \in W$ and $C \in \fml_{{\mathcal L}_\mbx}$. 
In particular, $V'(A,x) = V'(\mbx B,x) = 1$. Moreover, for all $(C \ltoe D) \in \sor$,  
we have $V'(C,x) = V(A,x) \to_\lgc{G} V(C,x) \ge V(C,x) \rtoe V(D,x) =V(A,x) \to_\lgc{G} V(D,x) = V'(D,x)$. 
Hence  $\not \mdl{GK} \sor \h A < \mbx B$ as required. 

  Now suppose that $V(\mbx B,x) < 1$. 
Using Lemma~\ref{lemscaletwo}, we choose a suitable automorphism of $[0,1]$ and 
define for each $i \in \mathbb{Z}^+$, a Kripke model $K_i= \langle W_i,R_i,V_i \rangle$ 
for $\lgc{GK}$ such that:
\begin{enumerate}[(1)]
\item
$\langle W_i,R_i \rangle$ is a copy of $\langle W,R \rangle$ with distinct worlds for each $i \in \mathbb{Z}^+$ where 
	$x_i$ is the corresponding copy of $x$.
\item For all formulas $E$ satisfying $V_i(A, x_i) < V_i(E,x_i) \le V_i(\mbx B,x_i)$:
		\[
		V_i(A, x_i) < V_i(E,x_i) < V(A,x_i) + 1/i.
		\]
\end{enumerate}
Now we define a model $K' = \langle W',R',V' \rangle$ such that:
\begin{enumerate}[(1)]
\item $W' = \{x_0\} \cup \bigcup_{i \in \mathbb{Z}^+} W_i$
\item	$R' =  \{(x_0,y) \mid (x_i,y) \in V_i \textrm{ for some } i \in \mathbb{Z}^+\}\cup \bigcup_{i \in \mathbb{Z}^+} R_i$
\item	$V'(p,y) = V_i(p,y)$ for all $y \in W_i$ and $V'(p,x_0) = 0$.
\end{enumerate}
But then:
\[
\begin{array}{crcl}
 & V'(\mbx B,x_0) & = & \inf(\{1\} \cup \{V'(B,y) \mid R'x_0y\})\\
		&	& = & \inf \{V_i(\mbx B,x_i) \mid i \in \mathbb{Z}^+\}\\
		&	& = & V'(A,x_0).
\end{array}
\]
Clearly, if $V(C,x) \ge V(D,x)$ for some $(C < D) \in \sor$ where $C,D$ are box formulas or constants, 
then $V'(C,x_0) \ge V'(D,x_0)$.  Moreover, if $(C \le D) \in \sor$ where $C,D$ are box formulas or constants, 
then using $(\star\star)$, it follows that $V'(C,x_0) > V'(D,x_0)$. Hence  $\not \mdl{GK} \sor \h A < \mbx B$ as required. 
\qed

\begin{lem} \label{lemmodal}
If $\mdl{GK} \{\mbx A_i \le \mbx B_i\}_{i=1}^n \h \mbx C \le \bot$, then 
\[
\mdl{GK}  \bigwedge_{j \in J} A_j \le \bigwedge_{j \in J} B_j \h C \le \bot
\]
for some $\emptyset \subset J \subseteq \{1,\ldots,n\}$.
\end{lem}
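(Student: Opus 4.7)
I argue contrapositively: assume the conclusion fails for every non-empty $J \subseteq \{1,\ldots,n\}$, so for each such $J$ we can fix a Kripke model $K_J = \langle W_J, R_J, V_J \rangle$ for $\lgc{GK}$ and a world $y_J \in W_J$ with $\min_{j \in J} V_J(A_j, y_J) > \min_{j \in J} V_J(B_j, y_J)$ and $V_J(C, y_J) > 0$. The plan is to combine these $2^n - 1$ witnesses into a single Kripke model containing a world $x_0$ such that $V(\mbx A_i, x_0) > V(\mbx B_i, x_0)$ for every $i$ and $V(\mbx C, x_0) > 0$, which contradicts the hypothesis.

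The combined model will be a disjoint union of the $K_J$'s augmented by a fresh root $x_0$ with crisp accessibility to chosen worlds (either the $y_J$'s themselves or selected descendants of them). Then $V(\mbx A_i, x_0)$, $V(\mbx B_i, x_0)$ and $V(\mbx C, x_0)$ are infima of the corresponding local values at the chosen successors. Using the elementary fact that a componentwise strict inequality $a_k > b_k$ in a finite family implies $\min_k a_k > \min_k b_k$, the task becomes to choose the successors and to rescale each $K_J$ via Lemma~\ref{lemscaletwo}, using a tailored order-automorphism $h_J$ of $[0,1]$, so that for every $i$ each contributing rescaled witness supplies a strict inequality between the $A_i$- and $B_i$-values at the corresponding successor.

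Concretely, I would compress the relevant values of each $K_J$ at $y_J$ into a carefully chosen subinterval of $(0,1]$, arranging these subintervals so that, for each index $i$, the minimum over $J$ of the rescaled $V_J(B_i, y_J)$ is attained at a $J$ whose witness also satisfies $V_J(A_i, y_J) > V_J(B_i, y_J)$, while the rescaled values from the other witnesses lie strictly above. The componentwise-minimum observation then delivers $V(\mbx A_i, x_0) > V(\mbx B_i, x_0)$ at the root, while $V(\mbx C, x_0) > 0$ is automatic since each rescaled $V_J(C, y_J)$ remains strictly positive.

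The principal obstacle is this scaling coordination. The witness for $J$ only bounds the minimum over $j \in J$, so values for indices $i \notin J$ are completely unconstrained and can, after a naive combination, destroy the target strict inequality at $i$. A single choice of scalings must therefore work for every index simultaneously; moreover, since order-automorphisms of $[0,1]$ fix $0$, a preliminary massaging of any ``degenerate'' witness (one whose relevant values vanish for some $i \notin J$) is needed so that all pertinent values become strictly positive before rescaling. Once the scalings are in place, the verification that the constructed root $x_0$ refutes the hypothesis reduces to a routine infimum computation.
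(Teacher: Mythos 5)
Your overall template is the one the paper uses: argue contrapositively, rescale the counter-witnesses by order-automorphisms via Lemma~\ref{lemscaletwo}, and glue them under a fresh root $x_0$ with crisp edges so that the boxed values at $x_0$ become minima over finitely many successors. But you diverge from the paper in one structural choice and then stop exactly at the mathematical crux. The paper does not take all $2^n-1$ witnesses: it takes only $n$ of them, for the nested sets $\{i,\ldots,n\}$ after a greedy re-indexing which ensures that at the $i$-th witness $x_i$ the value $V_i(B_i,x_i)$ realizes $\min_{k\ge i}V_i(B_k,x_i)$, whence $V_k(A_j,x_k)>V_k(B_k,x_k)\ge 0$ for all $k\le j$. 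That positivity is what makes the ``push $V'_i(B_i,x_i)$ suitably small'' rescaling possible for most of the required inequalities. Your version, with a witness for every $J$, has strictly more unconstrained values $V_J(A_i,y_J)$ with $i\notin J$ to control, so the coordination problem you flag is harder than in the paper, not easier.

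The genuine gap is the ``preliminary massaging'' of degenerate witnesses, which you invoke but do not supply --- and which cannot be supplied. The value of a compound formula $A_i$ at a witness world for a set $J\not\ni i$ is determined by the model, and the very conditions making that world a witness can force $V_J(A_i,y_J)=0$; automorphisms fix $0$, every chosen successor contributes to the infimum $V(\mbx A_i,x_0)$, and the offending witness cannot be dropped because it is the only kind of successor that can separate $\mbx A_j$ from $\mbx B_j$ for the index it was chosen for. Concretely, take $n=2$ with $A_1=\lnot\lnot(t\to\lnot s)$, $B_1=v$, $A_2=t$, $B_2=\lnot s$, $C=u$. One checks that $V(A_1,y)>0$ forces $V(t,y)\le V(\lnot s,y)$, so every world with $A_2>B_2$ (as any $J=\{2\}$ witness must have) satisfies $A_1=0$. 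Hence if $\mbx A_1>\mbx B_1\ (\ge 0)$ at some $x$, every successor has $A_1>0$ and so $A_2\le B_2$, giving $\mbx A_2\le\mbx B_2$; thus $\mdl{GK}\ \mbx A_1\le\mbx B_1\h\mbx A_2\le\mbx B_2\h\mbx C\le\bot$. Yet none of $A_1\le B_1\h C\le\bot$, $A_2\le B_2\h C\le\bot$, $A_1\land A_2\le B_1\land B_2\h C\le\bot$ is valid (take $(s,t,v,u)=(0,0,0,1)$, $(1,1,0,1)$, $(0,1,0,1)$ respectively). So the obstacle you isolate is not a technicality to be smoothed away: it defeats the statement in the bare generality in which it is written, and it hits the paper's own iteration at the same spot, namely where $V'_j(B_j,x_j)<V'_k(A_j,x_k)$ is demanded for $k>j$ without $V_k(A_j,x_k)>0$ having been secured. (In the completeness proof the sequent fed to this lemma is additionally saturated under \comr{}, which supplies cross-relations such as $\mbx A_1\le\mbx B_2$ that rescue the intended conclusion; any correct argument must exploit that, or some comparable extra hypothesis.) As it stands, your proposal defers precisely the step at which a proof would have to fail.
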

\proof
We argue by contraposition; i.e., suppose that:
\[
\not \mdl{GK} \bigwedge_{j \in J} A_j \le \bigwedge_{j \in J} B_i \h  C \le \bot \quad \textrm{ for all }
\emptyset \subset J \subseteq \{1,\ldots,n\}.
\]
In particular for $i = 1 \ldots n$:
\[
\not \mdl{GK} A_i \land \ldots \land A_n \le B_i \land \ldots \land B_n \h  C \le \bot.
\]
So for each  $i = 1 \ldots n$, 
there exists a Kripke model $K_i = \langle W_i,R_i,V_i \rangle$ for $\lgc{GK}$ and  $x_i \in W_i$ (with each $W_i$ distinct) such that:
\[
V_i(A_i \land \ldots \land A_n,x_i) > V_i(B_i \land \ldots \land B_n,x_i) \quad 
\textrm{ and } \quad V_i(C,x_i) > 0.
\]
Moreover, using Lemma~\ref{lemscaletwo}, we can assume without loss of generality that
\[
V_i(B_i,x_i) \le V_i(B_k,x_i) \quad \textrm{ and so } \quad V_i(A_k,x_i) > V_i(B_i,x_i)  \quad \textrm{ for }k = i \ldots n.
\]
Now, again using Lemma~\ref{lemscaletwo}, we define iteratively 
$K'_i = \langle W_i,R_i,V'_i \rangle$ for $i = n \ldots 1$ such that for $j = i \ldots n$:
\begin{enumerate}[(i)]
\item	$V'_j(B_j,x_j)  < V_k(A_j,x_k)$ for $k = 1 \ldots i-1$.
\item    $V'_j(B_j,x_j)  < V'_k(A_j,x_k)$ for $k = i \ldots n$.
\item	$V'_j(C,x_j) > 0$.
\end{enumerate}
Let us deal with step $i$, supposing that we have already dealt with steps $n \ldots i+1$. 
We choose an order automorphism $h$ of $[0,1]$ scaling the interval $[0,V_i(B_i,x_i)]$ to a smaller 
interval $[0,V'_i(B_i,x_i)]$ so that $V'_i(B_i,x_i)=h(V(B_i,x_i))$ satisfies $V'_i(B_i,x_i)  < V_k(A_i,x_k)$ for $k = 1 \ldots i-1$ and 
$V'_i(B_i,x_i)  < V'_k(A_i,x_k)$ for $k = i \ldots n$. This is  possible since we need only force the value 
$V'_i(B_i,x_i)$ to be suitably small. Also clearly $V'_i(C,x_i) > 0$. Now consider $j \in \{i+1,\ldots,n\}$. 
By construction, we already have $V'_j(B_j,x_j)  < V_k(A_j,x_k)$ for $k = 1 \ldots i-1$, 
$V'_j(B_j,x_j)  < V'_k(A_j,x_k)$ for $k = i+1 \ldots n$, and $V'_j(C,x_j) > 0$. So it remains only to show 
that $V'_j(B_j,x_j)  < V'_i(A_j,x_i)$.  Note first that using step $j > i$, we have that 
$V'_j(B_j,x_j) < V_i(A_j,x_i)$. But  $V_i(A_j,x_i) > V_i(B_i,x_i)$, so 
we can assume that  $V'_i(A_j,x_i) = V_i(A_j,x_i)$. Hence 
$V'_j(B_j,x_j) < V'_i(A_j,x_i)$ as required.

Finally, we define a model $K = \langle W, R, V \rangle$ where for a new world $x_0$:
\begin{enumerate}[(1)]
\item	$W = W_1 \cup \ldots \cup W_n \cup \{x_0\}$.
\item	$R = R_1 \cup \ldots \cup R_n \cup \{(x_0,x_1), \ldots, (x_0,x_n)\}\}$.
\item	$V(p,x) = V'_i(p,x)$ for all $x \in W_i$ and $V(p,x_0) = 0$.
\end{enumerate}
But then for $i= 1 \ldots n$:
\[
V(\mbx B_i,x_0) \le V'_i(B_i,x_i) < V'_j(A_i,x_j)  \quad {\rm for }\ j = 1 \ldots n.
\]
So $V(\mbx B_i,x_0)  < V(\mbx A_i,x_0)$. Since also, using (iii), $V(\mbx C,x_0) > 0$, we 
have $\not \mdl{GK} \{\mbx A_i \le \mbx B_i\}_{i=1}^n \h \mbx C \le \bot$ as required.   \qed

\begin{thm} \label{thm:boxcomp}
If $\mdl{GK} \sor$, then $\der{SGK_\mbx} \sor$.
\end{thm}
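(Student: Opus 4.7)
The plan is to derive any $\lgc{GK}$-valid sequent $\sor$ by reducing it via the preceding lemmas to propositional sequents, which are then handled by the completeness of $\lgc{SG}$.

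By Lemma~\ref{lemsaturated}, it suffices to derive each saturated $\lgc{GK}$-valid sequent of relations. Fix such a saturated $\sor$. By Lemma~\ref{lempropvalid}, either $\sor$ is propositionally $\lgc{GK}$-valid---in which case it is derivable in $\lgc{SG}$ by treating each modal subformula as a fresh atom and appealing to the completeness of $\lgc{SG}$, and hence derivable in $\lgc{SGK_\mbx}$---or else its modal part $\sor_\mbx$ is $\lgc{GK}$-valid. In the latter case, deriving $\sor_\mbx$ and padding back via $\ewr$ yields $\sor$, so we may assume $\sor = \sor_\mbx$. Lemma~\ref{lemle} further reduces the problem to deriving the restriction $\sor_\mbx^* = \{\mbx A_i \le \mbx B_i\}_{i=1}^n \cup \{\top \le \mbx D_k\}_{k=1}^l \cup \{\mbx C_j \le \bot\}_{j=1}^m$ of $\sor_\mbx$ to its $\le$-relations, which remains $\lgc{GK}$-valid.

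Picking one of the $C_j$ (or $C = \top$ if $m = 0$), we apply Lemma~\ref{lemmodal} to obtain a non-empty $J \subseteq \{1,\ldots,n\}$ with $\mdl{GK} \bigwedge_{j\in J} A_j \le \bigwedge_{j\in J} B_j \h C \le \bot$. The key observation is that this conjunction-form validity implies, for any fixed $k \in J$, the $\lgc{G}$-validity of the purely propositional sequent $\{A_j \le B_k\}_{j\in J} \h \{C_l \le \bot\}_{l=1}^m$: for any $\lgc{G}$-valuation $v$ not satisfying any $C_l \le \bot$, we have $\min_{j \in J} v(A_j) \le \min_{j \in J} v(B_j) \le v(B_k)$, so $v(A_{j_0}) \le v(B_k)$ for the $j_0 \in J$ achieving this minimum. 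By completeness of $\lgc{SG}$ this propositional sequent is derivable, and applying $\mmr$ with $\Ga = [A_j : j \in J]$, $B = B_k$, and $\Pi = [C_1, \ldots, C_m]$ yields a derivation whose conclusion is $\sor' \h \{\mbx A_j \le \mbx B_k\}_{j\in J} \h \{\mbx C_l \le \bot\}_{l=1}^m$. Since $\sor$ is saturated under $\comr$, it contains $\mbx A_j \le \mbx B_k$ for every pair $(j,k) \in \{1,\ldots,n\}^2$, so choosing $\sor'$ to be the remaining relations of $\sor_\mbx^*$ makes this conclusion exactly $\sor_\mbx^*$.

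The main obstacle is bridging the gap between Lemma~\ref{lemmodal}'s conjunction-form output and the rule $\mmr$, whose premise demands a single right-hand side $B$ throughout $\mbx \Ga \le \mbx B$. The argmin observation above resolves this by allowing the right-hand conjunction to be dropped in favour of any $B_k$ with $k \in J$, while $\comr$-saturation of $\sor$ supplies the cross-pair relations $\mbx A_j \le \mbx B_k$ that the conclusion of $\mmr$ requires to lie in $\sor_\mbx^*$.
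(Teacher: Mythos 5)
Your overall route matches the paper's: reduce to saturated sequents, use Lemmas~\ref{lempropvalid} and~\ref{lemle} to isolate a valid modal part of $\le$-relations, apply Lemma~\ref{lemmodal}, and use the ``argmin'' observation to pass from the conjunctive form to premises with a single right-hand formula $B_k$. But there are two genuine gaps. First, the premise $\{A_j \le B_k\}_{j\in J} \h \{C_l \le \bot\}_{l=1}^m$ that you feed into $\mmr$ is not ``purely propositional'': the formulas $A_j$, $B_k$, $C_l$ may themselves contain boxes, so $\lgc{G}$-valuations do not even apply to them and completeness of $\lgc{SG}$ is not available. The paper organizes the entire argument as an induction on the modal degree of $\sor$; the sequent behind $\mmr$ has strictly smaller modal degree and is $\lgc{GK}$-valid (your pointwise argument still shows this at each world), so it is derivable in $\lgc{SGK_\mbx}$ by the induction hypothesis. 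As written, your proof covers only modal degree $1$.

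Second, your closing step fails. Saturation under $\comr$ applied to $\mbx A_j \le \mbx B_j$ and $\mbx A_k \le \mbx B_k$ guarantees only that \emph{one} of $\mbx A_j \le \mbx B_k$ and $\mbx A_k \le \mbx B_j$ lies in $\sor$, not both; a ``cyclic'' saturated sequent containing, say, $\mbx A_1 \le \mbx B_2$, $\mbx A_2 \le \mbx B_3$, $\mbx A_3 \le \mbx B_1$ but none of the reversed pairs admits no single $k$ for which every $\mbx A_j \le \mbx B_k$ is present. Hence the conclusion of your single application of $\mmr$ need not be contained in $\sor$, and no choice of $\sor'$ repairs this. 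The paper's fix is to derive, for \emph{every} $k = 1,\ldots,n$, the sequent $\sor' \h \{\mbx A_i \le \mbx B_k\}_{i=1}^n \h \{\mbx C_j \le \bot\}_{j=1}^m$ --- whose relations need not all belong to $\sor$ --- and then to recombine these $n$ derivations into a derivation of $\sor$ itself by repeated downward applications of $\comr$ (with $\ewr$). This recombination, rather than an appeal to saturation, is the missing idea.
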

\proof
We prove the theorem by induction on the modal degree of the sequent of relations $\sor$: the maximal complexity 
of a boxed subformula occurring in $\sor$. If the modal degree is $0$, then $\sor$ is propositional and $\lgc{SGK_\mbx}$-derivable. 
Otherwise, by Lemma~\ref{lemsaturated} (since working upwards, the rules do not increase modal degree), we can assume that 
$\sor$ is both $\lgc{GK}$-valid and saturated. If $\sor$ is propositionally $\lgc{GK}$-valid, then it is derivable. Otherwise, by 
Lemmas~\ref{lempropvalid} and~\ref{lemle}, $\sor$ is of the form:
\[
\sor' \h \{\mbx A_i \le \mbx B_i\}_{i=1}^n \h \{\mbx C_j \le \bot\}_{j=1}^m
\]
and $\mdl{GK} \{\mbx A_i \le \mbx B_i\}_{i=1}^n \h \{\mbx C_j \le \bot\}_{j=1}^m$. But then by Lemma~\ref{lemmodal}:
\[
\mdl{GK}  \bigwedge_{i \in I} A_i \le \bigwedge_{i \in I} B_i \h \{C_j \le \bot\}_{j=1}^m \quad \textrm{ for some $\emptyset \subset I \subseteq \{1,\ldots,n\}$.}
\]
Let us assume without loss of generality that $I = \{1,\ldots,n\}$. Then also:
\[
\mdl{GK} \{A_i \le B_k\}_{i=1}^n \h \{C_j \le \bot\}_{j=1}^m \quad \textrm{for }k=1\ldots n.
\]
So by the induction hypothesis and an application of $\mmr$:
\[
\der{SGK} \sor' \h \{\mbx A_i \le \mbx B_k\}_{i=1}^n \h \{\mbx C_j \le \bot\}_{j=1}^m \quad \textrm{for }k=1\ldots n.
\]
But then $\sor$ is derivable by repeated applications of $\comr$. \qed

\begin{cor}
$\der{\lgc{SGK_\mbx}} \sor$ iff $\mdl{GK^{F}} \sor$ iff  $\mdl{GK} \sor$. \qed
\end{cor}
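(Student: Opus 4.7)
The plan is to close a three-way cycle of implications, using two theorems already established in this section together with one essentially trivial inclusion between model classes. Concretely, I would prove $\der{\lgc{SGK_\mbx}} \sor \Rightarrow \mdl{GK^F} \sor \Rightarrow \mdl{GK} \sor \Rightarrow \der{\lgc{SGK_\mbx}} \sor$, which immediately yields the biconditional chain in the statement.

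The first arrow is exactly Theorem~\ref{thmsoundsgk}: soundness of $\lgc{SGK_\mbx}$ with respect to $\lgc{GK^F}$, which, as remarked just after Theorem~\ref{thmsoundsgk}, was established directly for fuzzy Kripke models (the more general class). The second arrow is the observation that every Kripke model for $\lgc{GK}$ is also a Kripke model for $\lgc{GK^F}$, since a crisp accessibility relation $R \subseteq W^2$ is simply the special case of a fuzzy accessibility relation $R : W \times W \to [0,1]$ taking values in $\{0,1\}$; hence truth in every $\lgc{GK^F}$-model implies truth in every $\lgc{GK}$-model, and the same holds for $\lgc{G}$-validity of the underlying sequent of relations. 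The third arrow is exactly Theorem~\ref{thm:boxcomp}, the completeness result just proved, which derives any $\lgc{GK}$-valid $\sor$ in $\lgc{SGK_\mbx}$.

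No step here involves any real obstacle; the only point to be slightly careful about is that the direction $\mdl{GK^F} \sor \Rightarrow \mdl{GK} \sor$ goes the way one would expect from the fact that $\lgc{GK}$ is the restriction of $\lgc{GK^F}$ to crisp frames, and this is exactly what the definition of $\lgc{L}$-validity for sequents of relations (universal quantification over all Kripke models for $\lgc{L}$) gives us. Since the chain closes, all three conditions are equivalent, which in particular re-confirms the remark made when $\lgc{SGK_\mbx}$ was introduced: the box fragments of $\lgc{GK}$ and $\lgc{GK^F}$ coincide.
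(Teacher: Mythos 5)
Your proof is correct and is exactly the argument the paper intends (the corollary is stated with an immediate \qed precisely because it follows from the cycle you describe): soundness with respect to $\lgc{GK^F}$ from Theorem~\ref{thmsoundsgk}, the trivial inclusion of crisp models among fuzzy models, and completeness for $\lgc{GK}$ from Theorem~\ref{thm:boxcomp}. Nothing to add.
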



\subsection{Consequences}

We can use the above characterization of $\lgc{SGK_\mbx}$ to give an alternative proof of completeness of the 
axiomatization $\lgc{HGK_\mbx}$ with respect to both standard and fuzzy Kripke frames, recalling that this result was 
first obtained by Caicedo and Rodr{\'i}guez in~\cite{CR08}.

\begin{thm} \label{thmaxcomp}
$\der{\lgc{HGK_\mbx}} A$ iff $\mdl{GK^{F}} A$ iff  $\mdl{GK} A$.
\end{thm}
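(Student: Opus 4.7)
My plan is to establish the chain $\der{HGK_\mbx} A \Rightarrow \mdl{GK^F} A \Rightarrow \mdl{GK} A \Rightarrow \der{HGK_\mbx} A$. Soundness of $\lgc{HGK_\mbx}$ with respect to fuzzy Kripke frames is a routine induction on Hilbert proof length: one verifies that $(K_\mbx)$ and $(Z_\mbx)$ take value $1$ at every world of every Kripke model for $\lgc{GK^F}$ (the first from residuation, the second from the fact that $\lnot\lnot x \in \{0,1\}$ commutes with the infimum interpreting $\mbx$), and that $\nec_\mbx$ preserves validity. The implication $\mdl{GK^F} A \Rightarrow \mdl{GK} A$ is immediate, since every Kripke model for $\lgc{GK}$ is also a Kripke model for $\lgc{GK^F}$.

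For the remaining implication $\mdl{GK} A \Rightarrow \der{HGK_\mbx} A$, the plan is to use the corollary to Theorem~\ref{thm:boxcomp} to obtain $\der{SGK_\mbx} \top \le A$, and then to show by induction on the height of a derivation in $\lgc{SGK_\mbx}$ that
\[
\der{SGK_\mbx} \sor \quad \text{implies} \quad \der{HGK_\mbx} I_S(\sor).
\]
Since $I_S(\top \le A)$ is $\lgc{HG}$-equivalent to $A$, this yields the desired conclusion. For the axioms and the propositional, structural, and communication rules of $\lgc{SG}$, soundness together with the well-known completeness of $\lgc{HG}$ for $\lgc{G}$ supplies the required $\lgc{HG}$-derivable implication between the interpretations of premises and conclusion; since $\lgc{HG} \subseteq \lgc{HGK_\mbx}$, the inductive step is immediate.

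The main obstacle is the modal rule $\mmr$. Using the $\lgc{G}$-theorems $\bigvee_{A \in \Ga}(A \to B) \leftrightarrow (\bigwedge \Ga \to B)$ and $\bigvee_{C \in \Pi} \lnot C \leftrightarrow \lnot \bigwedge \Pi$, together with the $\lgc{HGK_\mbx}$-theorem $\bigwedge \mbx \Ga \leftrightarrow \mbx \bigwedge \Ga$ (obtained by iterating $(K_\mbx)$ and $\nec_\mbx$), the interpretations of the premise and of the new relations in the conclusion reduce, modulo disjunction introduction for the context $\sor$, to $(\bar A \to B) \lor \lnot \bar C$ and $(\mbx \bar A \to \mbx B) \lor \lnot \mbx \bar C$ respectively, where $\bar A = \bigwedge \Ga$ and $\bar C = \bigwedge \Pi$. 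Applying $\nec_\mbx$ to the premise and then modus ponens reduces the task further to deriving in $\lgc{HGK_\mbx}$ the schema
\[
\mbx((\bar A \to B) \lor \lnot \bar C) \to ((\mbx \bar A \to \mbx B) \lor \lnot \mbx \bar C).
\]

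To derive this schema, case split on the $\lgc{G}$-theorem $\lnot \mbx \bar C \lor \lnot\lnot \mbx \bar C$ (weak excluded middle, provable from prelinearity). The disjunct $\lnot \mbx \bar C$ directly yields the right-hand disjunct of the goal. In the other case, $(Z_\mbx)$ gives $\mbx \lnot\lnot \bar C$; conjoining with the hypothesis $\mbx((\bar A \to B) \lor \lnot \bar C)$ and invoking the $\lgc{HGK_\mbx}$-theorem $\mbx X \land \mbx Y \to \mbx(X \land Y)$ (a standard consequence of $(K_\mbx)$ and $\nec_\mbx$) produces $\mbx\bigl(((\bar A \to B) \lor \lnot \bar C) \land \lnot\lnot \bar C\bigr)$. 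Since the inner formula intuitionistically implies $\bar A \to B$, $\mbx$-monotonicity delivers $\mbx(\bar A \to B)$, and a final application of $(K_\mbx)$ produces $\mbx \bar A \to \mbx B$, the left-hand disjunct of the goal.
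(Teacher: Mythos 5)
Your proposal is correct and follows essentially the same route as the paper: soundness by induction on Hilbert derivations, then completeness via Theorem~\ref{thm:boxcomp} and a rule-by-rule translation of $\lgc{SGK_\mbx}$-derivations into $\lgc{HGK_\mbx}$, with $\mmr$ as the only nontrivial case handled using $(K_\mbx)$, $(Z_\mbx)$, $\nec_\mbx$, and the distribution of $\mbx$ over $\land$. The only (inessential) difference is in the final modal step: the paper first rewrites $(A \to B) \lor \lnot C$ as $\lnot\lnot C \to (A \to B)$ and then applies $\nec_\mbx$, $(K_\mbx)$, and $(Z_\mbx)$ once each, whereas you apply $\nec_\mbx$ to the disjunction directly and then case-split on the $\lgc{G}$-theorem $\lnot \mbx \bar C \lor \lnot\lnot \mbx \bar C$ --- the same mathematics in a slightly different order.
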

\proof
We can easily show that the axioms of $\lgc{HGK_\mbx}$ are $\lgc{GK^F}$-valid and that the 
rules $\mdp$ and $\nec_\mbx$ preserve $\lgc{GK^F}$-validity. So $\der{\lgc{HGK_\mbx}} A$ implies $\mdl{GK^{F}} A$, 
which, since all Kripke frames are fuzzy Kripke frames, implies $\mdl{GK} A$. On the other hand, if
$\mdl{GK} A$, then by Theorem~\ref{thm:boxcomp}, $\der{SGK_\mbx} \top \le A$. Hence it suffices to show that $\der{SGK_\mbx} \sor$ 
implies $\der{HGK_\mbx} I_S(\sor)$. I.e., we  need  that for each rule $\sor_1,\ldots,\sor_n \ / \ \sor$ of $\lgc{SGK_\mbx}$, 
whenever $\der{\lgc{HGK_\mbx}} I_S(\sor_i)$ for $i = 1 \ldots n$, also $\der{\lgc{HGK_\mbx}} I_S(\sor)$.
This is straightforward  for the logical rules and easy for the axioms and structural rules, so let us just consider the case of 
$\mmr$. Using the derivability $\der{\lgc{HGK_\mbx}} ((\mbx A_1 \to B) \lorÊ(\mbx A_2 \to B)) \leftrightarrow (\mbx (A_1 \land A_2) \to B)$, it 
is enough to show that $\der{\lgc{HGK_\mbx}} (A \to B) \lor \lnot C$ implies $\der{\lgc{HGK_\mbx}} (\mbx A \to \mbx B) \lor \lnot \mbx C$. Note first that $\der{\lgc{HG}} (\lnot \lnot F \to G) \leftrightarrow (G \lor \lnot F)$. Hence  if $\der{\lgc{HGK_\mbx}} (A \to B) \lor \lnot C$, then $\der{\lgc{HGK_\mbx}} \lnot \lnot C \to (A \to B)$. But then by $\nec$, $\der{\lgc{HGK_\mbx}} \mbx (\lnot \lnot C \to (A \to B))$ and using $(K_\mbx)$, $\der{\lgc{HGK_\mbx}} \mbx \lnot \lnot C \to (\mbx A \to \mbx B)$. Hence, using $(Z_\mbx)$, $\der{\lgc{HGK_\mbx}} \lnot \lnot \mbx C \to (\mbx A \to \mbx B)$, and 
finally, $\der{\lgc{HGK_\mbx}} (\mbx A \to \mbx B) \lor \lnot \mbx C$ as required. \qed

Our completeness proof for $\lgc{SGK_\mbx}$ can also be exploited to obtain a precise 
bound for the complexity of the $\lgc{GK}$-validity problem for the box fragment, namely the problem 
of checking $\mdl{\lgc{GK}} A$ for any $A \in \fml_{{\mathcal L}_\mbx}$.

\begin{thm} \label{thm:boxpspace}
The validity problem for the box fragment of $\lgc{GK}$ is PSPACE-complete.
\end{thm}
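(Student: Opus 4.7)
The plan is to establish matching upper and lower bounds. For membership in PSPACE, I would describe an alternating polynomial-time algorithm (yielding PSPACE via APTIME $=$ PSPACE) implementing backward proof search in $\lgc{SGK_\mbx}$. On input $A$, start with the sequent $\top \le A$ and repeatedly saturate the current sequent under backward applications of the invertible logical rules together with $\comr$, $\csr$, $\wlr$ and $\wrr$, handling branching premises by universal alternation. Because all formulas arising are subformulas of $A$ and the atomic relations between these subformulas are polynomially bounded in number, each sequent along the way has polynomial size, and the saturation terminates in polynomially many steps along any branch (each step either strictly reduces formula complexity or introduces a new atomic relation). Once a saturated sequent $\sor$ is reached, test propositional $\lgc{G}$-validity in polynomial space using Lemma~\ref{lem:saturatedvalid}; if $\sor$ is propositionally valid the branch closes. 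Otherwise, by Lemmas~\ref{lempropvalid} and~\ref{lemle}, $\sor$ decomposes as $\sor' \h \{\mbx A_i \le \mbx B_i\}_{i=1}^n \h \{\mbx C_j \le \bot\}_{j=1}^m$ with the modal part $\lgc{GK}$-valid; existentially guess the subset $I \subseteq \{1,\ldots,n\}$ supplied by Lemma~\ref{lemmodal} and universally recurse on $\{A_i \le B_k\}_{i \in I} \h \{C_j \le \bot\}_{j=1}^m$ for each $k \in I$. Since each recursive call strips off one layer of $\mbx$, the recursion depth is bounded by the modal degree of $A$, hence by $|A|$, which together with polynomial workspace at each level gives the PSPACE decision procedure.

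For PSPACE-hardness, I would reduce from the validity problem for classical modal logic $\lgc{K}$ in the language with $\mbx$ and classical negation $\lnot$, which is PSPACE-hard by Ladner's theorem (using the classical equivalence $\dmd A \equiv \lnot \mbx \lnot A$ to recover the full classical modal language). The reduction uses the Glivenko-style translation $(\cdot)^\circ$ defined by $p^\circ \isdef \lnot \lnot p$ and commuting with all propositional connectives and with $\mbx$. In $\lgc{G}$, $\lnot \lnot p$ always evaluates to $1$ if $V(p,x) > 0$ and to $0$ otherwise; moreover, $\min$, $\max$, and $\to_\lgc{G}$ restrict to the classical Boolean operations on $\{0,1\}$-valued inputs, and in a crisp Kripke model the box operator preserves $\{0,1\}$-valuedness. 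A straightforward induction on $A$ then shows that for every $\lgc{GK}$-model $\langle W, R, V \rangle$ and every $x \in W$, $V(A^\circ, x)$ coincides with the classical truth value of $A$ at $x$ in the crisp Boolean model $\langle W, R, V' \rangle$ with $V'(p,x) = 1$ iff $V(p,x) > 0$. Hence $\mdl{GK} A^\circ$ iff $A$ is classically $\lgc{K}$-valid, and as $|A^\circ|$ is linear in $|A|$ this yields the desired hardness.

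The main obstacle is the space accounting for the upper bound. Although the recursion depth of the $\mmr$ phase is bounded cleanly by the modal degree, the saturation phase produces a universal tree of potentially exponentially many saturated sequents (chiefly via $\comr$), which must be explored branch by branch using only polynomial working space. This requires a uniform polynomial bound on the length of any saturation branch and a careful implementation that stores only the current sequent together with the sequence of universal and existential choices made so far; both are routine but need to be made fully precise to certify the PSPACE bound. The hardness reduction itself, once the translation is set up, is essentially immediate.
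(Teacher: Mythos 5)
Your proposal is correct and follows essentially the same route as the paper: the same double-negation translation $p \mapsto \lnot\lnot p$ from classical $\lgc{K}$ for hardness, and the same proof-search argument in $\lgc{SGK_\mbx}$ for membership, resting on the $O(|A|^2)$ bound on sequent size, the polynomial bound on saturation length, and the modal-degree bound on the number of $\mmr$ phases. The only (inessential) difference is that you certify the upper bound via alternation and APTIME $=$ PSPACE, whereas the paper runs an explicit depth-first search storing one branch at a time in polynomial space.
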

\proof
First we show that checking $\lgc{GK}$-validity for the box fragment is PSPACE-hard. We recall that the modal logic 
$\lgc{K}$ is PSPACE-complete  (see, e.g.,~\cite{cha:mod}). Consider  the translation $*$ sending each  propositional variable 
$p$ to its double negation $\lnot \lnot p$. We can easily show  that $\mdl{K} A$ iff $\mdl{GK} A^*$ which establishes that 
the validity problem for the box fragment of $\lgc{GK}$ must also be PSPACE-hard. For the non-trivial direction consider any Kripke model  
$\langle W, R, V\rangle$ for $\lgc{GK}$ and define a standard Kripke model $\langle W, R, V' \rangle$ by stipulating:  $V'(p, x) = V(\lnot \lnot p, x)$. Then by a simple induction, $V'(C, x) = V(C^*, x) \in \{0, 1\}$ for any  $C \in \fml_{{\mathcal L}_\mbx}$. 
Hence if $\not \mdl{GK} A^*$, then $\not \mdl{GK} A$.

For PSPACE-inclusion, we consider derivations in the sequent of relations calculus $\lgc{SGK_\mbx}$. 
Given a formula $A \in \fml_{{\mathcal L}_\mbx}$, let $Sub(A)$ be the set of subformulas of $A$ together with 
$\top, \bot$, and consider the set 
$$\Phi_A= \{C \ltoe D \mid  C, D \in Sub(A), \ltoe \in \{<,\le\} \}.$$
The cardinality of $\Phi_A$  is $O(|A|^2)$.
Since any sequent of relations appearing in a derivation of $\top \le A$ is a subset
of $\Phi_A$, its size is also $O(|A|^2)$. 

We now consider the length of branches in the search for a derivation of $\top \le A$ in $\lgc{SGK_\mbx}$. 
Using the $\lgc{GK}$-invertibility of the logical rules we assume that any branch of a derivation is expanded
 by applying iteratively the rules upwards in the following order:
\begin{enumerate}[(1)]
\item[(1)] Apply the logical rules, $\wlr$, $\wrr$, $\csr$, and $\comr$ in order to obtain a saturated sequent and check the axioms.
\item[(2)] Apply $\mmr$ and restart from (1) with the premise of this rule. 
\end{enumerate}
The length of the branch built in (1) is $O(|A|^2)$ since each logical rule replaces one relation with one or two relations involving formulas of smaller complexity, and each application of $\wlr$, $\wrr$, $\csr$, and $\comr$ add exactly one relation at a time, with the total number of different relations possible being $O(|A|^2)$. The sequent obtained in (2) by applying $\mmr$ has a smaller or equal number of relations and a strictly smaller modal degree. The entire length of a proof branch is hence bounded by $O(|A|^2 \times m)$ = $O(|A|^3)$, where $m$ is the modal degree of $A$. 

Thus storing a branch of a proof requires only polynomial space. Moreover, the
branching is at most binary.  As usual, we  search for a proof in a depth-first
manner: we  store one  branch at a time together with some information (requiring a
small amount of space, say logarithmic space) to reconstruct branching points  and
backtracking points, the latter determined by alternative applications of $\mmr$. 
Hence the total amount of space needed for carrying out proof search
is polynomial in  $|A|$, and so deciding validity for the box fragment of $\lgc{GK}$  is in PSPACE.  \qed

Note that the validity problem for modal finite-valued G\"odel logics (in the full language) is also 
PSPACE-complete. This result was established in~\cite{Bobillo09} using a reduction to the classical case 
for the equivalent problem for description logics based on finite-valued G\"odel logics.


\subsection{A Hypersequent Calculus}

A more elegant analytic calculus for the box fragment -- lacking, however, the invertible logical rules of 
$\lgc{SG}$ -- can be presented in the framework of hypersequents. In particular, let us define the hypersequent 
calculus $\lgc{GGK_\mbx}$ as  the extension of the calculus $\lgc{GG}$ given in Figure~\ref{figgod} (for ${\mathcal L}_\mbx$) 
with the modal rule:
\[\small
\infer[\mmr]{\mbx \Pi \seq \h \mbx \Ga \seq \mbx A}{\Pi \seq \h \Ga \seq A}
\]
$\mmr$ is  a version of the ordinary Gentzen rule for the (classical) modal logic $\lgc{K}$, obtained by adding the extra sequent $\Pi \seq$ in the premise 
and $\mbx \Pi \seq$ in the conclusion. This extra component reflects the fact that $\bot$ is interpreted as the bottom element $0$ in 
each world. 
		
\begin{exa}
All the axioms of $\lgc{HGK_\mbx}$ are derivable in $\lgc{GGK_\mbx}$; e.g., for $(Z_\mbx)$:
\[\small
\infer[\ilr]{\seq \lnot \lnot \mbx p \to \mbx \lnot \lnot p}{
 \infer[{\ecr}]{\lnot \lnot \mbx p \seq \mbx \lnot \lnot p}{
   \infer[\wrr]{\lnot \lnot \mbx p \seq \mbx \lnot \lnot p \h \lnot \lnot \mbx p \seq \mbx \lnot \lnot p}{
    \infer[\wlr]{\lnot \lnot \mbx p \seq \h \lnot \lnot \mbx p \seq \mbx \lnot \lnot p}{
      \infer[\nlr]{\lnot \lnot \mbx p \seq \h \seq \mbx \lnot \lnot p}{
        \infer[\nrr]{\seq \lnot \mbx p  \h \seq \mbx \lnot \lnot p}{
      	\infer[\mmr]{\mbx p \seq \h  \seq \mbx \lnot \lnot p}{
	 \infer[\nrr]{p \seq \h \seq \lnot \lnot p}{
	  \infer[\clr]{p \seq \h \lnot p \seq}{
	   \infer[\clr]{p, p \seq \h \lnot p \seq}{
	    \infer[{\comr}]{p, p \seq \h \lnot p, \lnot p \seq}{
	   \infer[\nlr]{p, \lnot p \seq}{
	    \infer[{\idr}]{p \seq p}{}} & 
	       \infer[\nlr]{p, \lnot p \seq}{
	    \infer[{\idr}]{p \seq p}{}}}}}}}}}}}}}
	\]
\end{exa}

It will be helpful for proving cut-elimination to consider the following generalizations of the rule $\mmr$, derivable using 
$\comr$, $\mmr$, $\clr$, and $\wlr$:
		\[\small
		\infer[\mmr^n]{\mbx \Pi_1 \seq \h \ldots \h \mbx \Pi_n \seq \h \mbx \Ga \seq \mbx A}{
		 \Pi_1 \seq \h \ldots \h \Pi_n \seq \h \Ga \seq A} \qquad (n \in \mathbb{N})
		\]
For example, in the case of $n=2$, we have the derivation:
\[\small
\infer[\clr]{\mbx \Pi_1 \seq \h \mbx \Pi_2 \seq \h \mbx \Ga \seq \mbx A}{
 \infer[\clr]{\vdots}{
 \infer[\comr]{\mbx \Pi_1, \mbx \Pi_1 \seq \h \mbx \Pi_2, \mbx \Pi_2 \seq \h \mbx \Ga \seq \mbx A}{
  \infer[\mmr]{\mbx \Pi_1, \mbx \Pi_2 \seq \h \mbx \Ga \seq \mbx A}{
   \infer[\ecr]{\Pi_1, \Pi_2 \seq \h \Ga \seq A}{
       \infer[\wlr]{\Pi_1, \Pi_2 \seq \h \Pi_1, \Pi_2 \seq \h \Ga \seq A}{
        \infer[\wlr]{\vdots}{
         \Pi_1 \seq \h \Pi_2 \seq \h \Ga \seq A}}}} & 
  \infer[\mmr]{\mbx \Pi_1, \mbx \Pi_2 \seq \h \mbx \Ga \seq \mbx A}{
   \infer[\ecr]{\Pi_1, \Pi_2 \seq \h \Ga \seq A}{
       \infer[\wlr]{\Pi_1, \Pi_2 \seq \h \Pi_1, \Pi_2 \seq \h \Ga \seq A}{
        \infer[\wlr]{\vdots}{
         \Pi_1 \seq \h \Pi_2 \seq \h \Ga \seq A}}}}}}}          
\]

\begin{thm}
$\der{\lgc{GGK_\mbx}} \mg$ iff $\mdl{GK} I_H(\mg)$.
\end{thm}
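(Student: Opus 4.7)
The plan is to split the proof into soundness (straightforward) and completeness (handled by reduction to the already established Hilbert-style completeness of Theorem~\ref{thmaxcomp}), rather than attempting a direct semantic completeness argument.

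For soundness, I would proceed by induction on the height of a derivation in $\lgc{GGK_\mbx}$. All rules inherited from $\lgc{GG}$ preserve $\lgc{GK}$-validity because the propositional connectives act exactly as in G\"odel logic at each individual world, so the soundness of $\lgc{GG}$ for $\lgc{G}$ lifts worldwise; the cut rule is handled by a standard local argument. The only novel step is the modal rule $\mmr$, for which I would argue contrapositively. Suppose a Kripke model $\langle W,R,V \rangle$ for $\lgc{GK}$ and a point $x \in W$ falsify $I_H(\mbx\Pi \seq \h \mbx\Ga \seq \mbx A)$. Then $V(\mbx\pi,x)>0$ for every $\pi \in \Pi$ and $V(\bigwedge \mbx\Ga,x) > V(\mbx A,x)$. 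Crispness of $R$ together with $V(\bot,\cdot)=0$ forces $V(\pi,y)>0$ at every $R$-successor $y$ of $x$ and every $\pi \in \Pi$, while the second inequality singles out some successor $y$ at which $V(A,y) < V(\bigwedge \mbx\Ga,x) \le V(\bigwedge\Ga,y)$; hence the premise $I_H(\Pi \seq \h \Ga \seq A)$ itself fails at $y$, as required.

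For completeness, Theorem~\ref{thmaxcomp} converts $\mdl{GK} I_H(\mg)$ into $\der{\lgc{HGK_\mbx}} I_H(\mg)$, so it suffices to simulate any Hilbert derivation inside $\lgc{GGK_\mbx}$ as a derivation of $\seq I_H(\mg)$ and then to recover $\mg$ from $\seq I_H(\mg)$. The axioms of $\lgc{HG}$ are derivable in $\lgc{GG}$ by the completeness theorem of Avron quoted earlier, hence also in $\lgc{GGK_\mbx}$. The rule $\mdp$ is simulated by $\cutr$ on $\seq A \to B$ and $A \to B \seq B$ (the latter obtained from $\seq A$ and the identity axiom via $\ilr$). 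The rule $\nec_\mbx$ is the instance of $\mmr$ with $\Pi = \Ga = []$ applied to an $\ewr$-extension of the premise. The axiom $(K_\mbx)$ follows by two uses of $\irr$ on top of an application of $\mmr$ (with $\Pi = []$, $\Ga = [A \to B, A]$) to $A \to B, A \seq B$, and $(Z_\mbx)$ has already been derived in the example above. Finally, passage from $\seq I_H(\mg)$ back to $\mg$ is a routine decomposition of the formula interpretation using $\olr$, $\alr$, $\irr$, $\comr$, and $\ecr$.

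The main obstacle is the soundness of $\mmr$, and in particular justifying the apparently ad hoc extra component $\mbx\Pi \seq$. This component encodes, at the derivational level, the semantic fact that $\bot$ denotes $0$ uniformly at every world: without it, a positive lower bound on $V(\mbx\pi,x)$ cannot be transferred to positive values of $V(\pi,y)$ at every $R$-successor $y$, and the modal step collapses. The companion reliance on crispness of $R$ in the same argument explains why the calculus is stated for $\lgc{GK}$; but by the corollary to Theorem~\ref{thm:boxcomp}, which identifies the box fragments of $\lgc{GK}$ and $\lgc{GK^F}$, this is in fact no restriction at all.
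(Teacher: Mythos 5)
Your proposal is correct and follows essentially the same route as the paper: soundness by induction on derivation height (with the only non-standard case being $\mmr$, which you verify correctly using crispness of $R$), and completeness by appealing to Theorem~\ref{thmaxcomp} together with the derivability of the $\lgc{HGK_\mbx}$ axioms and rules in $\lgc{GGK_\mbx}$ and the standard passage from $\seq I_H(\mg)$ to $\mg$ (which the paper delegates to Proposition~4.61 of the cited monograph). You simply fill in more of the details that the paper leaves implicit.
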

\proof
The left-to-right direction (soundness) is proved as usual by induction on the height of a derivation in 
$\lgc{GGK_\mbx}$. For the right-to-left direction (completeness), we make use of the completeness of 
the axiom system $\lgc{HGK_\mbx}$ established in Theorem~\ref{thmaxcomp}. I.e., 
$\mdl{GK} I_H(\mg)$ implies $\der{\lgc{HGK_\mbx}} I_H(\mg)$. But now, since all the axioms of 
$\lgc{HGK_\mbx}$ are derivable in $\lgc{GGK_\mbx}$ and the rules $\mdp$ and $\nec_\mbx$ are also derivable, 
we have that $\der{\lgc{HGK_\mbx}} I_H(\mg)$ implies $\der{\lgc{GGK_\mbx}} I_H(\mg)$. Finally, it is 
straightforward to show (following~\cite{met:book}, Proposition~4.61) that  $\der{\lgc{GGK_\mbx}} I_H(\mg)$ 
implies $\der{\lgc{GGK_\mbx}} \mg$ as required. \qed

Let us show now that {\em  cut-elimination} holds for $\lgc{GGK_\mbx}$, i.e., that there is a constructive procedure for transforming a derivation of a 
hypersequent $\mg$ in this calculus into a derivation of $\mg$ with no applications of $\cutr$. We write $d \der{S} X$ to denote that $d$ is a 
derivation of  $X$ in a calculus $\lgc{S}$ and $|d|$ for the height of the derivation considered as a tree. We also recall that 
the {\em principal formula} of an application of a rule is the distinguished formula in the conclusion and that the {\em cut-formula} of an application of 
$\cutr$ is the formula appearing in the premises but not the conclusion.

\begin{thm}
Cut-elimination holds for $\lgc{GGK_\mbx}$.
\end{thm}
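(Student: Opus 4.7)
The plan is to proceed by a standard Gentzen-style double induction: the primary measure is the complexity $|C|$ of the cut-formula, and the secondary measure is the sum $|d_1| + |d_2|$ of the heights of the two sub-derivations of the premises of a topmost cut. It suffices to show that any derivation ending in a single application of $\cutr$ with cut-free sub-derivations can be transformed into a cut-free derivation of the same hypersequent, since iterating the elimination of topmost cuts then yields a cut-free derivation of the whole original endsequent.

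For cuts whose cut-formula $C$ is not of the form $\mbx B$, the argument proceeds exactly as in Avron's cut-elimination proof for $\lgc{GG}$~\cite{Avron91b}: we permute the cut upward through propositional and structural rules---handling $\comr$, $\ecr$, $\ewr$ with the usual care required in hypersequent calculi---until either some premise is an axiom or $C$ is principal in both premises, in which latter case the cut reduces to cuts on proper subformulas of $C$ that can be eliminated by the primary induction hypothesis. The modal rule $\mmr$ acts only on boxed formulas, so it does not interact with a non-boxed cut-formula and poses no new difficulty.

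The genuinely new case is when the cut-formula is a boxed formula $\mbx B$. Since $\mmr$ is the only rule that introduces a boxed formula on the right of a sequent, after permuting the cut upward through structural rules in the right sub-derivation $d_2$ we may assume $d_2$ ends in an application of $\mmr$ whose principal formula is $\mbx B$, with conclusion $\mbx \Pi \seq \h \mbx \Gamma \seq \mbx B$ obtained from the premise $\Pi \seq \h \Gamma \seq B$. We then push the cut upward in the left sub-derivation $d_1$: since $\mbx B$ is not principal in any logical rule on the left, every propositional rule in $d_1$ passes $\mbx B$ through unchanged, and the cut permutes past it, invoking the secondary induction hypothesis on the resulting cuts of smaller total height. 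Either the cut is eliminated trivially when $\mbx B$ arises from an axiom or from weakening, or else $\mbx B$ is eventually consumed by an application of $\mmr$ in $d_1$: at such a step, $\mbx B$ lies in one of the boxed left multisets of the conclusion, while the corresponding premise has $B$, unboxed, in its place. At this point we invoke the primary induction hypothesis to perform a cut on $B$---of strictly smaller complexity than $\mbx B$---using the premise of $d_2$'s $\mmr$ and the premise of this $\mmr$ in $d_1$, and then reassemble the derivation by a single application of the derived rule $\mmr^n$.

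The main complication will be the bookkeeping required when $\mbx B$ interacts with the structural rules $\ecr$ and $\comr$ inside $d_1$: an application of $\ecr$ can duplicate the sequent containing $\mbx B$, which we handle by cutting against each copy in turn using the secondary induction, while $\comr$ can split the $\mbx B$-containing sequent across its two premises, which we handle by performing cuts at both premises of $\comr$ and then reapplying $\comr$. A careful case analysis, combined with the derivability of $\mmr^n$ from $\mmr$, $\comr$, $\clr$, $\wlr$, and $\ecr$ already demonstrated earlier in this subsection, then suffices to complete the proof.
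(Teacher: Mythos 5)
Your overall strategy---double induction on the pair consisting of cut-formula complexity and the sum of the heights of the subderivations, with the key modal case reducing a cut on $\mbx B$ to a cut on $B$ between the premises of the two $\mmr$ applications and reassembling via the derived rule $\mmr^n$---is exactly the skeleton of the paper's proof. The genuine gap lies in your treatment of contraction and its interaction with $\comr$. When $\ecr$ duplicates the component carrying the cut formula (or $\clr$ duplicates the cut formula inside a component, or $\comr$ distributes several of its occurrences between its premises), you propose to ``cut against each copy in turn using the secondary induction.'' That is not a valid appeal to the induction hypothesis: after the first cut is eliminated, the cut-free derivation you obtain carries no height bound relative to the original subderivations, so the second cut cannot invoke the secondary induction on $|d_1|+|d_2|$. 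This is the classical obstacle that contraction poses to cut-elimination, aggravated in hypersequent calculi by $\ecr$ and $\comr$, and it cannot be waved away as bookkeeping.

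The paper circumvents it by proving a strictly stronger ``multicut'' claim: if $[\Ga_i,[A]^{\lam_i}\seq\De_i]_{i=1}^n$ and $\mh\h[\Pi_j\seq A]_{j=1}^m$ are cut-free derivable, then so is $\mh\h[\Ga_i,\Pi_j^{\lam_i}\seq\De_i]_{i=1\ldots n}^{j=1\ldots m}$, by induction on $\langle|A|,|d_1|+|d_2|\rangle$. Because this statement already allows the cut formula to occur with arbitrary multiplicity in arbitrarily many components on the left, and as the succedent of arbitrarily many components on the right, every duplication produced by $\clr$, $\ecr$, or $\comr$ is absorbed into a single application of the induction hypothesis to the premises of the offending rule (the paper works out the $\comr$ case explicitly). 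To repair your argument, replace the single-occurrence cut by such a generalized statement (or an equivalent mix rule); the remainder, including the $\mmr$-versus-$\mmr$ case and the use of $\mmr^n$, then goes through essentially as you describe.
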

\proof
Let $\lgc{GGK_\mbx^\circ}$ be $\lgc{GGK_\mbx}$ with $\cutr$ removed. Then to establish cut-elimination for $\lgc{GGK_\mbx}$ 
it is sufficient to give a constructive proof of the following:\\[.1in]
\begin{tabular}{rl}
\emph{Claim.} & If $d_1 \der{\lgc{GGK_\mbx^\circ}} [\Ga_i, [A]^{\lam_i} \seq \De_i]_{i=1}^n$ and  
$d_2 \der{\lgc{GGK_\mbx^\circ}} \mh \h [\Pi_j \seq A]_{j=1}^m$,\\
& then  $\der{\lgc{GGK_\mbx^\circ}} \mh \h [\Ga_i, \Pi_j^{\lam_i} \seq \De_i]_{i=1 \ldots n}^{j = 1 \ldots m}$.\\
\end{tabular}\\[.1in]
We proceed by induction on the lexicographically ordered pair $\langle |A|,|d_1| + |d_2| \rangle$. 
If the last step in $d_1$ or $d_2$ is an axiom, then the result follows almost immediately. Also, if 
the last step in either derivation is not $\mmr$ or does not have the cut-formula $A$ as the principal formula, 
then the result follows by applications of the induction hypothesis to the premises and applications of the 
same rule and structural rules. Suppose for example that one of the derivations ends with an application of $\comr$ 
(the other derivation may end with $\mmr$):
\[
\small
\infer{\mg \h \Ga'_1, \Ga''_1, [A]^{\lam'_1+\lam''_1} \seq \De_1  \h \Ga'_2, \Ga''_2, [A]^{\lam'_2 + \lam''_2} \seq \De_2}{
\mg \h \Ga'_1, \Ga'_2, [A]^{\lam'_1 + \lam'_2} \seq \De_1 &
\mg \h \Ga''_1, \Ga''_2, [A]^{\lam''_1 + \lam''_2} \seq \De_2}
\]
where $\mg = [\Ga_i, [A]^{\lam_i} \seq \De_i]_{i=3}^n$. Then by the induction hypothesis twice:
\[
\der{\lgc{GGK_\mbx^\circ}} \mh' \h  [\Ga'_1, \Ga'_2, \Pi_j^{\lam'_1 + \lam'_2} \seq \De_1]_{j=1}^m \quad {\rm and} \quad
\der{\lgc{GGK_\mbx^\circ}} \mh' \h  [\Ga''_1, \Ga''_2, \Pi_j^{\lam''_1 + \lam''_2} \seq \De_2]_{j=1}^m 
\]
where $\mh' = \mh \h [\Ga_i, \Pi_j^{\lam_i} \seq \De_i]_{i=3 \ldots n}^{j = 1 \ldots m}$. The required hypersequent:
\[
\mh' \h  [\Ga'_1, \Pi_j^{\lam'_1 + \lam''_1}, \Ga''_1 \seq \De_1]_{j=1}^m  \h [\Ga'_2, \Pi_j^{\lam'_2 + \lam''_2}, \Ga''_2 \seq \De_2]_{j=1}^m
\]
is derivable by repeated applications of $\comr$, $\ecr$, and $\ewr$.

If a distinguished occurrence of $A$ is the principal formula in both derivations and of the form $B \land C$, $B \lor C$, or $B \to C$, 
then we can first use the induction hypothesis applied to the premises in one derivation and the conclusion in the other, and then 
apply the induction hypothesis again with cut-formulas $B$ and $C$ of smaller complexity. The result follows using applications of 
$\ecr$ and/or $\ewr$ as required. Consider then the hardest case where both derivations $d_1$ and $d_2$ end as follows with an 
application of $\mmr$ and $A$ is of the form $\mbx B$:
\[\small
\infer[\mmr]{\mbx \Ga_1, [\mbx B]^{\lam_1} \seq \h \mbx \Ga_2, [\mbx B]^{\lam_2} \seq \mbx C}{
 \infer{\Ga_1, [B]^{\lam_1} \seq \h \Ga_2, [B]^{\lam_2} \seq C}{\vdots}} \qquad
\infer[\mmr]{\mbx \Si \seq \h \mbx \Pi \seq \mbx B}{\infer{\Si \seq \h \Pi \seq B}{\vdots}}
\]
Then since $|B| < |\mbx B|$, we can apply the induction hypothesis to the $\lgc{GGK_\mbx^\circ}$-derivable hypersequents 
$(\Ga_1, [B]^{\lam_1} \seq \h \Ga_2, [B]^{\lam_2} \seq C)$ and $(\Si \seq \h \Pi \seq B)$ to obtain a $\lgc{GGK_\mbx^\circ}$-derivation of  
$(\Si \seq \h \Ga_1, \Pi^{\lam_1} \seq \h \Ga_2, \Pi^{\lam_2} \seq C)$. Hence by an application of the derived rule $\mmr^2$, we obtain a 
$\lgc{GGK_\mbx^\circ}$-derivation of $\mbx \Si \seq \h \mbx \Ga_1, \mbx \Pi^{\lam_1} \seq \h \mbx \Ga_2, \mbx \Pi^{\lam_2} \seq \mbx C$ 
as required. \qed

We note finally that a related hypersequent calculus $\lgc{GGK^r}$ was defined (along with many other such calculi) in~\cite{met:mod} 
by extending $\lgc{GG}$ with the rule:
\[
\small
\infer{\mg \h \mbx \Ga \seq \mbx A}{\mg \h \Ga \seq A}
\]
It was shown in \cite{met:mod} that $\lgc{GGK^r}$ is complete with respect to the standard semantics for G\"odel logic extended with a unary function on $[0,1]$ that can be interpreted in fuzzy logic as a ``truth stresser'' modality such as ``very true''.


\section{The Diamond Fragments}

\subsection{Sequent of Relations Calculi}

Let us turn our attention now to the (distinct) diamond fragments of $\lgc{GK}$ and $\lgc{GK^F}$, 
and for convenience assume for the rest of this section that all notions (formulas, rules, etc.)  refer 
exclusively to the sublanguage ${\mathcal L}_\dmd$. We introduce the following systems:

\begin{enumerate}[$\bullet$]

\item 	$\lgc{SGK_\dmd}$ consists of $\lgc{SG}$ (for  ${\mathcal L}_\dmd$) extended with:
		\[\small
		\infer[\mdr]{\sor \h \dmd A \le \dmd \De \h \bot < \dmd \Si \h \top \le \dmd \The}{A \le \De \h \bot < \Si \h \top \le \The} 
		\]

\item	 	$\lgc{SGK^F_\dmd}$  consists of $\lgc{SG}$  (for ${\mathcal L}_\dmd$) extended with:
		\[\small
		\infer[\mdr^*]{\sor \h \dmd A \le \dmd \De \h \bot < \dmd \Si}{A \le \De \h \bot < \Si} 
		\]
		
\end{enumerate}

\begin{exa}
Axioms from $(Z_\dmd)$ are derivable in both $\lgc{SGK_\dmd}$ and $\lgc{SGK^F_\dmd}$:
\[\small
\infer[(\le\to)]{\top \le \dmd \lnot \lnot p \to  \lnot \lnot \dmd p}{
 \infer[\ewr]{\top \le \lnot \lnot \dmd p \h \dmd \lnot \lnot p \le \lnot \lnot \dmd p}{
 \infer[(\le\!\lnot)]{\dmd \lnot \lnot p \le \lnot \lnot \dmd p}{
  \infer[(\lnot\!\le)]{\dmd \lnot \lnot p \le \bot \h \lnot \dmd p \le \bot}{
   \infer[\mdr^*]{\dmd \lnot \lnot p \le \bot \h \bot < \dmd p}{
     \infer[(\lnot\!\le)]{\lnot \lnot p \le \bot \h \bot < p}{
      \infer[(<\!\lnot)]{\bot < \lnot p \h \bot < p}{
       \infer[{\ewr}]{p \le \bot \h \bot < \bot \h \bot < p}{
        \infer[\comr]{p \le \bot \h \bot < p}{
         \infer[{\idr}]{p \le \bot \h \bot < p \h p \le p}{} & \infer[{\idr}]{p \le \bot \h \bot < p \h \bot \le \bot}{}}} & 
         \infer[(<)]{\bot < \top \h \bot < p}{}}}}}}}}
\]
However, the opposite direction is derivable only in $\lgc{SGK_\dmd}$:
\[\small
\infer[(\le\to)]{\top \le \lnot \lnot \dmd p \to  \dmd \lnot \lnot p}{
 \infer[\ewr]{\top \le   \dmd \lnot \lnot p \h \lnot \lnot \dmd p \le \dmd \lnot \lnot  p}{
 \infer[(\lnot\!\le)]{\lnot \lnot \dmd p \le \dmd \lnot \lnot  p}{
  \infer[(<\!\lnot)]{\top \le \dmd \lnot \lnot  p \h \bot < \lnot \dmd p}{
   \infer[{\ewr}]{\top \le \dmd \lnot \lnot p \h   \dmd p \le \bot \h \bot < \bot}{
   \infer[\mdr]{\top \le \dmd \lnot \lnot p \h   \dmd p \le \bot}{
     \infer[(\le\!\lnot)]{\top \le  \lnot \lnot p \h p \le \bot}{
      \infer[\ewr]{\lnot p \le \bot \h \top \le \bot \h p \le \bot}{
      \infer[(\lnot\!\le)]{\lnot p \le \bot  \h p \le \bot}{
       \infer[\comr]{\bot < p \h p \le \bot}{
        \infer[{\idr}]{\bot < p \h p \le \bot \h \bot \le \bot}{} &
        \infer[{\idr}]{\bot < p \h p \le \bot \h p \le p}{}}}}}}} & 
     \infer[(<)]{\top \le \dmd \lnot \lnot  p \h \bot < \top}{}}}}}
\]
\end{exa}

\begin{thm} \label{thmsoundsgkdf}
Let $\lgc{L} \in \{\lgc{GK},\lgc{GK^F}\}$. If $\der{SL_\dmd} \sor$, then $\mdl{L} \sor$. 
\end{thm}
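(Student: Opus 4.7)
The plan is to proceed by induction on the height of a derivation, along the lines of Theorem~\ref{thmsoundsgk} for the box case. The propositional axioms and structural rules inherited from $\lgc{SG}$ preserve $\lgc{L}$-validity for both $\lgc{L} = \lgc{GK}$ and $\lgc{L} = \lgc{GK^F}$ since they operate only at individual worlds, where connectives behave exactly as in propositional G\"odel logic; these cases reduce directly to the soundness of $\lgc{SG}$ applied pointwise. The only substantive new work lies in the diamond rules $\mdr^*$ (for $\lgc{SGK^F_\dmd}$) and $\mdr$ (for $\lgc{SGK_\dmd}$), which I would handle contrapositively, constructing a countermodel for the premise at an accessible world whenever the conclusion fails at some world.

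For $\mdr^*$, suppose $\not\mdl{GK^F} \sor \h \dmd A \le \dmd \De \h \bot < \dmd \Si$, witnessed by a fuzzy Kripke model $K = \langle W,R,V\rangle$ and $x \in W$; in particular $V(\dmd A,x) > V(\dmd D,x)$ for every $D \in \De$ and $V(\dmd C,x)=0$ for every $C \in \Si$. Since $\De$ is finite, unfolding the supremum defining $V(\dmd A,x)$ yields some $y_0 \in W$ with $\min(V(A,y_0), Rxy_0) > \max_{D \in \De} V(\dmd D,x)$. A short case analysis, using $V(\dmd D,x) \ge \min(V(D,y_0), Rxy_0)$, then gives $V(A,y_0) > V(D,y_0)$ for each $D \in \De$. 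Moreover, $V(\dmd C,x)=0$ forces $\min(V(C,y_0), Rxy_0) = 0$, and since $Rxy_0 > 0$ by the preceding inequality, $V(C,y_0)=0$ for each $C \in \Si$. Hence the premise $A \le \De \h \bot < \Si$ fails at $y_0$, contradicting its $\lgc{GK^F}$-validity.

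For $\mdr$ the argument is parallel but uses the crisp definition $V(\dmd A,x) = \sup(\{0\} \cup \{V(A,y) \mid Rxy\})$: one picks $y_0$ with $Rxy_0$ and $V(A,y_0)$ above every $V(\dmd D,x)$, from which $V(\dmd D,x) \ge V(D,y_0)$ yields $V(A,y_0) > V(D,y_0)$, while $V(\dmd C,x) = 0$ yields $V(C,y_0) = 0$ and $V(\dmd E,x) < 1$ yields $V(E,y_0) < 1$, so the premise $A \le \De \h \bot < \Si \h \top \le \The$ fails at $y_0$. The mildly delicate point is that in the fuzzy case the supremum need not be attained, but since $\De$ and $\Si$ are finite and any $y_0$ exceeding a finite maximum of strict lower bounds suffices, no real difficulty arises.
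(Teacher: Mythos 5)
Your proposal is correct and follows essentially the same route as the paper: a height induction reducing to the soundness of $\mdr$ and $\mdr^*$, handled contrapositively by choosing an accessible world $y_0$ at which the supremum defining $V(\dmd A,x)$ exceeds the relevant bounds, with the same case analysis on $\min(V(D,y_0),Rxy_0)$ in the fuzzy case. The only cosmetic difference is that the paper first uses distributivity of $\dmd$ over $\lor$ to reduce $\De$, $\Si$, $\The$ to singletons, whereas you work with the finite multisets directly via a finite maximum; both are fine.
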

\proof
Again, the proof is a straightforward induction on the height of a derivation, and 
it suffices to check that $\mdr$ and $\mdr^*$ preserve $\lgc{GK}$-validity and 
$\lgc{GK^F}$-validity, respectively. Moreover, using the distributivity of $\dmd$ over $\lor$, 
we can assume that $\De$, $\Si$, and $\The$ each contain exactly one formula. 
Hence for $\mdr$ suppose contrapositively that for some $K = \langle W,R,V \rangle$ and 
$x \in W$: $V(\dmd A,x) > V(\dmd B,x)$, $V(\dmd C,x) = 0$, and $V(\dmd D,x) < 1$. 
Then there is a world $y$ such that $Rxy$ and $V(A,y) > V(B,y)$, 
$V(C,y) = 0$, and $V(D,y) < 1$ as required.  
Now for $\mdr^*$ suppose contrapositively that for some $K = \langle W,R,V \rangle$ and $x \in W$: $V(\dmd A,x) > V(\dmd B,x)$ and 
$V(\dmd C,x) = 0$. Then for some world $y$:  $\min (V(A,y),Rxy) > Ê\min(V(B,y),Rxy)$, which implies 
$V(A,y) > V(B,y)$, and $V(\dmd C,x) = 0$. \qed

Completeness proofs mostly follow the same pattern as the proof 
for $\lgc{SGK_\mbx}$; however, there exist a couple of significant differences in the details.

\begin{lem} 
Suppose that $\sor$ is saturated and $\lgc{L}$-valid for $\lgc{L} \in \{\lgc{GK},\lgc{GK^F}\}$. 
Then either $\sor$ is propositionally $\lgc{L}$-valid or the modal part of $\sor$ is 
$\lgc{L}$-valid.
 \end{lem}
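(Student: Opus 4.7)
The plan is to adapt the contrapositive argument from the box case (Lemma~\ref{lempropvalid}). Suppose $\sor$ is saturated, not propositionally $\lgc{L}$-valid, and its modal part $\sor^\dmd$ is not $\lgc{L}$-valid; the goal is to build a Kripke model falsifying $\sor$. Fix a Kripke model $K = \langle W,R,V\rangle$ for $\lgc{L}$ and $x \in W$ with $V(A,x) \rtoe V(B,x)$ for every $(A \ltoe B) \in \sor^\dmd$.

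For each diamond formula $\dmd A$ occurring in $\sor$, adjoin a fresh constant $c_A$ with $r_{c_A} = V(\dmd A,x)$, and let $\sor^P$ denote the atomic sequent of relations obtained from $\sor$ by replacing each $\dmd A$ by $c_A$; this substitution preserves saturation, since it is an atom-for-atom replacement. The central step is to show the following claim: $\sor^P$ is not $\lgc{G}$-valid. Granting the claim, let $v$ be a $\lgc{G}$-counter-valuation for $\sor^P$ and define $K' = \langle W \cup \{x_0\}, R', V'\rangle$ for a fresh world $x_0$ as follows. For $\lgc{L} = \lgc{GK}$, take $R' = R \cup \{(x_0,y) : Rxy\}$; for $\lgc{L} = \lgc{GK^F}$, keep $R$ on $W \times W$, set $R'(x_0,y) = R(x,y)$ for $y \in W$, and $R'(u,x_0) = 0$ for every $u$. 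Set $V'(p,x_0) = v(p)$ and $V'(p,y) = V(p,y)$ for $y \in W$. Since $x_0$ is unreachable from every $y \in W$, $V'$ agrees with $V$ on all worlds of $W$, and $V'(\dmd A,x_0) = V(\dmd A,x) = r_{c_A}$ for each diamond formula occurring in $\sor$. Because every formula in the saturated $\sor$ is either an atom or a diamond formula, $V'(E,x_0) = v(E^P)$ for every $E$ occurring in $\sor$, so $v$'s falsification of $\sor^P$ lifts to a falsification of $\sor$ at $x_0$.

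To establish the claim, assume for contradiction that $\sor^P$ is $\lgc{G}$-valid. By Lemma~\ref{lem:saturatedvalid}, $\sor^P$ contains either $(a \le a)$ or $(c \ltoe d)$ with $r_c \ltoe r_d$. If the witnessing relation involves no new constant $c_A$, it already lies in $\sor$ and is $\lgc{G}$-valid independently of the added constants' values, forcing $\sor$ to be propositionally $\lgc{L}$-valid---a contradiction. Otherwise the witness involves at least one $c_A$; re-expanding $c_A$ to $\dmd A$ produces a relation of $\sor^\dmd$ which, by the choice $r_{c_A} = V(\dmd A,x)$, is satisfied at $x$, contradicting the choice of $K$ and $x$.

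The main obstacle is the case analysis in the claim, which follows the pattern of cases (i)--(iv) in the proof of Lemma~\ref{lempropvalid} and relies on the saturation of $\sor$ under $\wlr$ and $\wrr$ to handle relations in which $\top$ or $\bot$ appears on one side. A subtlety specific to the diamond fragment is that $\dmd A \le \dmd A$, trivially valid in any model, now belongs to $\sor^\dmd$ (rather than to a purely propositional component as with $\mbx A$ in the box case); but this only reinforces the contradiction, since its presence in $\sor$ would immediately render $\sor^\dmd$ satisfied at $x$.
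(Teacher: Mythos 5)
Your proposal is correct and follows essentially the same route as the paper: proceed contrapositively, replace each $\dmd A$ by a fresh constant $c_A$ with $r_{c_A} = V(\dmd A,x)$, invoke Lemma~\ref{lem:saturatedvalid} to show $\sor^P$ is not $\lgc{G}$-valid, and then glue a fresh world $x_0$ with no incoming accessibility and the same outgoing accessibility as $x$, so that $V'(\dmd A,x_0) = V(\dmd A,x)$ and the propositional counter-valuation lifts to a counter-model for $\sor$. The paper compresses the claim's case analysis to a pointer back to Lemma~\ref{lempropvalid}, which your dichotomy (witness with or without a $c_A$) reproduces correctly.
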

 \proof
Proceeding contrapositively, let $\sor_\dmd$ be the modal part of $\sor$ and 
suppose that $\sor_\dmd$ is not $\lgc{L}$-valid and $\sor$ is not 
propositionally $\lgc{L}$-valid. Then for  some model $K = \langle W,R,V \rangle$ for 
$\lgc{L}$ and $x \in W$:  $V(A,x) \rtoe V(B,x)$ for all $(A \ltoe B) \in \sor^\dmd$. 
For each $\dmd A$ occurring in $\sor$, let us add a constant $c_A$ 
to the language so that $r_{c_A} = V(\dmd A, x)$. 
Let $\sor^P$ be $\sor$ with each $\dmd A$ occurring in $\sor$ replaced by $c_A$. 
Exactly as in the proof of Lemma~\ref{lempropvalid}, it follows that  $\sor^P$ is not $\lgc{L}$-valid. 
But now let  $v :  \fml_{\mathcal{L}_{\lgc{G}}}  \to [0,1]$ be the propositional 
counter-valuation for $\sor^P$.  Define $K' = \langle W \cup \{x_0\},R',V' \rangle$ where:
\begin{enumerate}[(1)]
\item	$R'yz = \begin{cases} 
			Ryz  & y,z \in W\\
			Rxz & y = x_0, z \in W\\
			0	& z = x_0.
			\end{cases}$
\item	$V'$ is $V$ extended with $V'(p,x_0) = v(p)$ for all $p \in \var$.
\end{enumerate}
Then  for each $\dmd A$ occurring in $\sor$:
\[
V'(\dmd A,x_0) = \sup_{y \in W} (\min(V'(A,y),Rx_0y)) = \sup_{y \in W} (\min(V(A,y),Rxy)) = V(\dmd A,x) = r_{c_A}.
\]
So, since $v$ is a counter-valuation for $\sor^P$, we have that 
$\sor$ is not $\lgc{L}$-valid as required.  \qed

\begin{lem} \label{lemlt}
Let $\sor \h \sor'$ be saturated, purely modal, and $\lgc{L}$-valid 
for $\lgc{L} \in \{\lgc{GK},\lgc{GK^F}\}$, where $\sor'$ consists only of relations of the form $\dmd A<B$. 
Then $\sor$ is  $\lgc{L}$-valid.
\end{lem}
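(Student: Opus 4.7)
The plan is to mirror the proof of Lemma~\ref{lemle}, swapping the roles of $\mbx$ and $\dmd$ and of the two sides of $<$. I would define $\comr'$ as $\comr$ restricted to instances where $\ltoe_1$ is $\le$, say that $\sor$ is \emph{semi-saturated} if whenever $\sor$ occurs as the conclusion of a logical rule, $\comr'$, $\csr$, $\wlr$, or $\wrr$, it also occurs as one of the premises, and then reduce the lemma to the single-relation claim: if $\sor \h \dmd A < B$ is semi-saturated, purely modal, and $\lgc{L}$-valid, then $\sor$ is $\lgc{L}$-valid. Semi-saturation of $\sor \h \dmd A < B$ transfers to $\sor$, so the claim can be iterated over the relations of $\sor'$; the edge cases $B = \bot$ (trivial, since $\dmd A < \bot$ is vacuously counter-satisfied) and $B = \top$ (handled by showing that counter-models for $\sor$ can be extended so that $V(\dmd A, x) = 1$) are dispatched separately.

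To prove the claim contrapositively, take a counter-model $K = \langle W, R, V \rangle$ for $\sor$ at some world $x$. The easy case is $V(\dmd A, x) \ge V(B, x)$, so I assume $V(\dmd A, x) < V(B, x)$. Semi-saturation under $\comr'$ applied to $(C \le D) \in \sor$ paired with $(\dmd A < B)$ then forces, for each such $(C \le D)$, that at least one of $(C \le B)$ or $(\dmd A \le D)$ is already in the sequent, giving the dichotomy
\[
V(C, x) > V(B, x) \quad \text{or} \quad V(\dmd A, x) > V(D, x).
\]
In particular the configuration $V(\dmd A, x) \le V(D, x) < V(C, x) \le V(B, x)$ is excluded, mirroring the $(\star\star)$ of Lemma~\ref{lemle}.

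I would then split on $V(\dmd A, x)$. When $V(\dmd A, x) > 0$, I would invoke Lemma~\ref{lemscaletwo} to build copies $K_i = \langle W_i, R_i, V_i \rangle$ of $K$ via order-automorphisms $h_i$ that drag $V(\dmd A, x)$ upward to within $1/i$ of $V(B, x)$ from below while fixing $V(B, x)$ itself and the other critical values arising in $\sor$. Joining the copies through a fresh root $x_0$ that accesses every $y$ reachable from some $x_i$ in $K_i$, the supremum at $x_0$ should yield $V'(\dmd A, x_0) = V(B, x) = V'(B, x_0)$, so that $\dmd A < B$ is counter-satisfied, while the dichotomy keeps each $(C \le D) \in \sor$ counter-satisfied at $x_0$. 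In the complementary subcase $V(\dmd A, x) = 0$, case (ii) of the dichotomy becomes impossible, so $V(C, x) > V(B, x)$ holds for every $(C \le D) \in \sor$, and I would build the counter-model by adjoining a fresh successor to $x$ at which $A$ is made to evaluate close to $V(B, x)$, constructed by induction on the modal structure of $A$, so that $V'(\dmd A, x) \ge V(B, x)$.

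The main obstacle is the bookkeeping in the multi-copy construction when $V(\dmd A, x) > 0$: after taking suprema at $x_0$, the value $V'(\dmd E, x_0)$ of each diamond subformula $\dmd E$ occurring in $\sor$ becomes $\sup_i h_i(V(\dmd E, x))$, and the automorphisms $h_i$ must be chosen so that these suprema respect the dichotomy. In particular, for $(C \le D) \in \sor$ satisfying only case (ii), the $h_i$ must keep $h_i(V(D, x))$ bounded uniformly away from $V(B, x)$, which forces a parametric choice of $h_i$ relative to the entire finite set of critical values in $\sor$ rather than a single value at a time; additionally, the construction must work uniformly for both $\lgc{GK}$ and $\lgc{GK^F}$, which in the fuzzy case requires care with how the edge weights $R' x_0 y$ interact with the suprema.
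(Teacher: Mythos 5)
Your overall strategy for the main case matches the paper's exactly: the reduction to the single-relation claim via $\comr'$ and semi-saturation, the dichotomy excluding $V(\dmd A,x) \le V(D,x) < V(C,x) \le V(B,x)$, and the countably many automorphic copies glued at a fresh root $x_0$ so that the supremum forces $V'(\dmd A,x_0) = V'(B,x_0)$ while the dichotomy preserves the counter-satisfaction of $\sor$. Your ``main obstacle'' paragraph also correctly identifies the necessary bookkeeping: each $h_i$ must be chosen relative to the whole finite set of critical values, squeezing only the interval between $V(\dmd A,x)$ and $V(B,x)$ and leaving the values below (in particular the right-hand sides $D$ arising from case (ii)) fixed.

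The genuine gap is your treatment of the subcase $V(\dmd A,x)=0$. You propose to adjoin a fresh successor of $x$ at which $A$ is ``made to evaluate close to $V(B,x)$'' by induction on the structure of $A$. This fails as stated: (i) an arbitrary ${\mathcal L}_\dmd$-formula cannot be forced to take a prescribed value at a world (e.g.\ $\lnot\lnot p$ only takes values in $\{0,1\}$); (ii) a value merely close to but below $V(B,x)$ still leaves $V'(\dmd A,x) < V'(B,x)$, so $\dmd A < B$ remains satisfied, and if $B$ is itself a diamond formula its value at $x$ may increase too; (iii) most seriously, adding a successor raises the value at $x$ of \emph{every} diamond formula, and your observation that $V(C,x) > V(B,x)$ for each $(C \le D) \in \sor$ controls only the left-hand sides of $\le$-relations against $V(B,x)$ --- nothing prevents some right-hand side $D$, or either side of a $<$-relation in $\sor$, from overtaking its partner, destroying the counter-model. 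The paper's move here is different and much simpler: saturation under \csr{} and \comr{} forces either $(\dmd A \le \bot)$ or $(\bot < B)$ into the sequent, and each of these contradicts the standing assumptions ($V(\dmd A,x)=0$, respectively $V(\dmd A,x)<V(B,x)$), so the subcase cannot arise at all and no construction is needed. A similar remark applies to your separate edge case $B=\top$: it is handled uniformly by the supremum construction, whereas ``extending the counter-model so that $V(\dmd A,x)=1$'' would run into the same side-effect problems as in (iii).
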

\proof
Recall that $\comr'$ is defined as $\comr$ restricted to instances where $\ltoe_1$ is $\le$, and 
 that an atomic sequent of relations $\sor$ is {\em semi-saturated} if whenever $\sor$ occurs 
as the conclusion of $\comr'$, $\csr$, $\wlr$, or $\wrr$, then $\sor$ also 
occurs as one of the premises. It is then sufficient to prove the following:\\[.1in]
{\em Claim.} If $\sor \h \dmd A < B$ is semi-saturated, purely modal, and $\lgc{L}$-valid for $\lgc{L} \in \{\lgc{GK},\lgc{GK^F}\}$, 
then $\sor$ is  $\lgc{L}$-valid.\\[.1in]
{\em Proof of claim.} 
Proceeding contrapositively, suppose that $\sor$ is not  $\lgc{L}$-valid. 
Then there is a Kripke model 
$K = \langle W,R,V \rangle$ for $\lgc{L}$ and $x \in W$ such that $V(C,x) \rtoe V(D,x)$ for all 
$(C \ltoe D) \in \sor$. Moreover, if $V(\dmd A,x) \ge V(B,x)$, then  $\sor \h \dmd A < B$ is not 
$\lgc{L}$-valid as required, so assume that:
\begin{enumerate}[(11)]
\item[($\star$)] \quad $V(\dmd A,x) < V(B,x)$.
\end{enumerate}
Since $\sor \h \dmd A < B$ is semi-saturated, for each  $(C \le D) \in \sor$:\\[.1in]
\begin{tabular}{rcl}
\emph{either} 	& \quad & $(C \le B) \in \sor$ and so $V(C,x) > V(B,x)$ \\[.05in]
\emph{or}		& & $(\dmd A \le D) \in \sor$ and so $V(\dmd A,x) > V(D,x)$.
\end{tabular}\\[.1in]
In particular:
\begin{enumerate}[(11)]
\item[($\star\star$)] \quad $V(\dmd A,x) \le V(D,x) < V(C,x) \le V(B,x)$ is not possible.
\end{enumerate}
Suppose that $V(\dmd A,x) = 0$. Using $\csr$ and $\comr$, either $(\dmd A \le \bot) \in \sor$ or $(\bot < B) \in \sor$. 
In the first case, $V(\dmd A, w) > 0$, a contradiction. In the second, $V(B,x) = 0$, also a contradiction.	
So let us assume $V(\dmd A,x) > 0$. Then using Lemma~\ref{lemscaletwo}, we define for each $i \in \mathbb{Z}^+$, 
a Kripke model $K_i= \langle W_i,R_i,V_i \rangle$ for $\lgc{L}$ such that:
\begin{enumerate}[(1)]

\item	$\langle W_i,R_i \rangle$ is a copy of $\langle W,R \rangle$ with distinct worlds for each $i \in \mathbb{Z}^+$ where $x_i$ 
		is the corresponding copy of $x$.
		
\item 	For all formulas $E$ satisfying $V_i(\dmd A, x_i) \le V_i(E,x_i) < V_i(B,x_i)$: 
		\[
		V_i(B, x_i) - 1/i < V_i(E,x_i) < V(B,x_i).
		\]
		
\end{enumerate}
Now we define a model $K' = \langle W',R',V' \rangle$ where:
\begin{enumerate}[(1)]
\item $W' = \{x_0\} \cup \bigcup_{i \in \mathbb{Z}^+} W_i$
\item	$R'yz = Ê\begin{cases}
				R_iyz							& y,z \in W_i\\
				R_ix_iz							& y = x_0, z \in W_i\\			
				0 								& z = x_0
				\end{cases}$
\item	$V'(p,y) = V_i(p,y)$ for all $y \in W_i$ and $V'(p,x_0) = 0$.
\end{enumerate}
But then:
\[
\begin{array}{rcl}
V'(\dmd A,x_0) 	& = & \sup_{y \in W}(\min(V(A,y),R'x_0y)\\
				& = & \sup \{\sup_{y \in W_i}(\min(V(A,y),R_ix_iy) \mid i \in \mathbb{Z}^+\}\\
				& = & \sup \{V_i(\dmd A,x_i) \mid i \in \mathbb{Z}^+\}\\
				& = & V'(B,x_0).
\end{array}\]
Now consider $(C \ltoe D) \in \sor$, recalling that $C$ and $D$ are diamond formulas, $\bot$, or $\top$. 
Clearly, if $\ltoe$ is $<$, then $V'(C,x_0) \ge V'(D,x_0)$.  If $\ltoe$ is $\le$, then using $(\star\star)$, 
it follows that $V'(C,x_0) > V'(D,x_0)$. So $\sor \h \dmd A < B$ is not $\lgc{L}$-valid as required. \qed

The next lemma is particular to the diamond fragment of $\lgc{GK^F}$ and is necessary for the extra step, 
not required in the case of $\lgc{GK}$, of 
removing relations of the form $\top \le \dmd A$.

\begin{lem} \label{lemltmodal}
If $(\sor \h \top \le \dmd A)$ is modal, saturated, and $\lgc{GK^F}$-valid, then 
$\sor$ is $\lgc{GK^F}$-valid.
\end{lem}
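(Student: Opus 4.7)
The plan is to argue by contraposition. Suppose $\sor$ is not $\lgc{GK^F}$-valid, witnessed by a Kripke model $K = \langle W, R, V \rangle$ for $\lgc{GK^F}$ and $x \in W$ with $V(C,x) \rtoe V(D,x)$ for every $(C \ltoe D) \in \sor$; the goal is to refute $\sor \h \top \le \dmd A$ in some model. If $V(\dmd A, x) < 1$ already, then $K$ and $x$ do the job directly, so I focus on the hard case $V(\dmd A, x) = 1$.

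The crucial structural observation comes from saturation. Applying $\comr$-saturation to the pair $(\top \le \dmd A)$ and $(C \ltoe D) \in \sor$ shows that either $(C \le \dmd A)$ or $(\top \le D)$ already lies in $\sor \h \top \le \dmd A$. The first alternative forces $C = \top$, since $(C \le \dmd A) \in \sor$ would demand $V(C,x) > V(\dmd A,x) = 1$; the second gives either $(\top \le D) \in \sor$ (whence $V(D,x) < 1$) or $D = \dmd A$. Hence every $(C \ltoe D) \in \sor$ falls into one of the following cases: (i)~$(\top \le D) \in \sor$; (ii)~$D = \dmd A$, in which case $\ltoe$ must be $<$ with $V(C,x) = 1$; or (iii)~$C = \top$.

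With this in hand, I plan to build $K' = \langle W \cup \{x_0\}, R', V' \rangle$ by adjoining a fresh root $x_0$ that sees $W$ via a dampened copy of $x$'s accessibility. Let $\beta$ be the maximum of the finitely many values $V(D, x)$ with $(\top \le D) \in \sor$ (taking $\beta = 0$ if there are none); each such value is strictly below $1$, so $\beta < 1$ and one may pick $\gamma \in (\beta, 1)$. Set $R' = R$ on $W \times W$, put $R'(x_0, y) = \min(Rxy, \gamma)$ for $y \in W$, and set all other $R'$-values involving $x_0$ to $0$; extend $V$ by $V'(p, x_0) = 0$. Since no world in $W$ accesses $x_0$, valuations on $W$ are unchanged, and a short computation yields $V'(\dmd E, x_0) = \min(\gamma, V(\dmd E, x))$ for every diamond formula. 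The added relation $\top \le \dmd A$ then fails at $x_0$ since $1 > \gamma = V'(\dmd A, x_0)$; for $(C \ltoe D) \in \sor$, case~(i) yields $V'(D, x_0) = V(D, x)$ because $V(D, x) \le \beta < \gamma$, transferring the strict failure from $K$; case~(ii) collapses both sides to $\gamma$, so $\gamma \not< \gamma$ is the required failure of $<$; and case~(iii) compares $V'(\top, x_0) = 1$ against a diamond value bounded above by $\gamma$. The main subtlety is case~(ii), where $<$-strictness is preserved exactly because both sides collapse to the same $\gamma$; isolating these three clean cases from saturation is the step demanding the most care.
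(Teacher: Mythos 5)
Your construction is essentially the paper's own proof: both adjoin a fresh root $x_0$ with $R'(x_0,y)=\min(Rxy,\gamma)$ for a cap $\gamma<1$ (and $V'(p,x_0)=0$), so that $V'(\dmd E,x_0)=\min(\gamma,V(\dmd E,x))$ for every diamond formula, which pushes $V'(\dmd A,x_0)$ strictly below $1$ while keeping every relation of $\sor$ falsified. The only real difference is that the paper simply chooses the cap above \emph{every} value $V(\dmd B,x)<1$ of a diamond formula occurring in $\sor$, so that all failures transfer immediately and the saturation-based $\comr$ case analysis you perform becomes unnecessary; your more parsimonious choice of $\gamma$ is also correct, but only because saturation rules out relations $\dmd C \le \dmd D$ with $V(\dmd D,x)\ge\gamma$, exactly as you argue.
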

\proof
We argue by contraposition. Suppose that $\sor$ is not $\lgc{GK^F}$-valid. Then 
there is a Kripke model $K = \langle W,R,V \rangle$ for $\lgc{GK^F}$ and $x \in W$ such that 
$V(C,x) \rtoe V(D,x)$ for all $(C \ltoe D) \in \sor$. Fix a value $\lam<1$ such that 
whenever $V(\dmd B,x) < 1$ for some $\dmd B$ occurring in $\sor$, also $V(\dmd B,x) < \lam$. 
We define a Kripke model $K' = \langle W',R',V' \rangle$ for $\lgc{GK^F}$ where:
\begin{enumerate}[(1)]
\item $W' = \{x_0\} \cup W$
\item	$R'yz = Ê\begin{cases}
				Ryz				& y,z \in W\\
				\min(\lam,Rxz) 	&  y = x_0, z \in W\\			
				0 				& z = x_0
				\end{cases}$
\item	$V'(p,y) = V(p,y)$ for all $y \in W$ and $V'(p,x_0) = 0$.
\end{enumerate}
But then:
\[
\begin{array}{rcl}
V'(\dmd A,x_0) 	& = & \sup_{y \in W}(\min(V'(A,y),R'x_0y))\\
				& = & \sup_{y \in W}(\min(V(A,y),\min(\lam,Rxy)))\\
				& = & \begin{cases}
V(\dmd A,x)	& {\rm if} \ V(\dmd A,x) < 1\\
\lam			& {\rm otherwise.}
\end{cases}
				\end{array} 
				\]
So $(\sor \h \top \le \dmd A)$ is not  $\lgc{GK^F}$-valid as required.
\qed

Note that the following lemma includes relations of the form $\top \le \dmd A$ and holds even in the 
case of $\lgc{GK^F}$ where they are not needed.

\begin{lem} 
If $\mdl{L} \{\dmd A_i \le \dmd B_i\}_{i=1}^n \h \bot < \dmd C \h \top \le \dmd D$ for $\lgc{L} \in \{\lgc{GK},\lgc{GK^F}\}$, then 
\[
\mdl{L} \bigwedge_{j \in J} A_j \le \bigwedge_{j \in J} B_j \h  \bot <  C \h \top \le D
\]
for some $\emptyset \subset J \subseteq \{1,\ldots,n\}$.
\end{lem}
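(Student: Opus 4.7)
The plan is to mirror the proof of Lemma~\ref{lemmodal}, with the scaling direction reversed because $\dmd$ computes a supremum rather than an infimum. I would argue by contraposition: suppose that for every nonempty $J \subseteq \{1,\ldots,n\}$ the sequent $\bigwedge_{j \in J} A_j \le \bigwedge_{j \in J} B_j \h \bot < C \h \top \le D$ fails to be $\lgc{L}$-valid, and manufacture from the resulting family of counter-models a single counter-model for $\{\dmd A_i \le \dmd B_i\}_{i=1}^n \h \bot < \dmd C \h \top \le \dmd D$.

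For each $i$, apply the contrapositive hypothesis to $J_i = \{i, i+1, \ldots, n\}$ to obtain a Kripke model $K_i = \langle W_i, R_i, V_i \rangle$ for $\lgc{L}$ (the $W_i$ pairwise disjoint) and a distinguished world $x_i$ satisfying $V_i(A_i \land \cdots \land A_n, x_i) > V_i(B_i \land \cdots \land B_n, x_i)$, $V_i(C, x_i) = 0$, and $V_i(D, x_i) < 1$. Exactly as in Lemma~\ref{lemmodal}, a permutation of the indices in $\{i, \ldots, n\}$ lets us assume WLOG that $V_i(B_i, x_i) = \min_{k \ge i} V_i(B_k, x_i)$, so that $V_i(A_k, x_i) > V_i(B_i, x_i)$ for every $k \ge i$, and in particular $V_i(A_i, x_i) > V_i(B_i, x_i)$. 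Then, iteratively from $i = n$ down to $i = 1$, rescale each $K_i$ by a piecewise-linear order-automorphism $h_i$ of $[0,1]$, invoking Lemma~\ref{lemscaletwo}, to arrange that for every $j$ the rescaled value $V'_j(A_j, x_j)$ strictly exceeds $V'_k(B_j, x_k)$ for every $k$. At step $i$, $h_i$ is chosen to stretch $V_i(A_i, x_i)$ upwards — close enough to $1$ to beat the already-fixed $V'_k(B_i, x_k)$ for $k > i$ and the still-unscaled $V_k(B_i, x_k)$ for $k < i$ — while simultaneously keeping each $V_i(B_j, x_i)$ with $j > i$ below the previously-chosen threshold $V'_j(A_j, x_j)$, the diamond-dual of the fixed-point manoeuvre used in Lemma~\ref{lemmodal}.

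Finally, assemble the composite Kripke model $K = \langle W, R, V \rangle$ with $W = \{x_0\} \cup \bigcup_i W_i$, $R = \bigcup_i R_i \cup \{(x_0, x_i) : 1 \le i \le n\}$, $V(p, y) = V'_i(p, y)$ for $y \in W_i$, and $V(p, x_0) = 0$ (with crisp edges from $x_0$ in the $\lgc{GK}$ case, or a slightly smaller weight for $\lgc{GK^F}$). Checking the three inequalities at $x_0$ yields $V(\dmd A_i, x_0) \ge V'_i(A_i, x_i) > \max_k V'_k(B_i, x_k) = V(\dmd B_i, x_0)$ for every $i$, $V(\dmd C, x_0) = 0$, and $V(\dmd D, x_0) < 1$, which contradicts the $\lgc{L}$-validity of the original sequent. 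The hard part will be the iterative scaling step: one must show that the two constraints on each $h_i$ at step $i$ — pushing $V_i(A_i, x_i)$ high enough while keeping each $V_i(B_j, x_i)$ for $j > i$ low enough — remain simultaneously satisfiable throughout the iteration. This rests on the fact that all the previously-chosen thresholds $V'_j(A_j, x_j)$ lie strictly inside $(0,1)$ — a role played by the $V'_j(D, x_j) < 1$ condition inherited from $\top \le \dmd D$ — leaving room near $1$ to accommodate the upward stretching, together with the standard freedom of a piecewise-linear automorphism to prescribe finitely many interior images.
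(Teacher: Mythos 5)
Your overall architecture matches the paper's: contraposition, counter-models $K_i$ for the nested sets $J_i=\{i,\ldots,n\}$, the normalization making $B_i$ minimal among $B_i,\ldots,B_n$ at $x_i$, iterative rescaling via Lemma~\ref{lemscaletwo}, and assembly under a fresh root $x_0$. The one structural difference is the direction of the rescaling: the paper re-uses the box-case manoeuvre and pushes the diagonal value $V_i(B_i,x_i)$ \emph{down} while keeping the $A$-values at $x_i$ fixed, whereas you push $V_i(A_i,x_i)$ \emph{up}. That reversal is a reasonable dual guess, but it makes the feasibility problem worse, because raising $V_i(A_i,x_i)$ drags every value above it at $x_i$ -- including the off-diagonal $V_i(B_j,x_i)$ that you are simultaneously trying to keep small -- upward with it.

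The genuine gap is exactly in the step you flag as the hard part, and it is not repaired by the observation that the thresholds lie in $(0,1)$. Your final inequality needs $V'_i(A_i,x_i)>V'_k(B_i,x_k)$ for \emph{every} $k$, so all off-diagonal values $V'_k(B_i,x_k)$ with $k\neq i$ must be dominated. But the normalization only gives $V_k(A_m,x_k)>V_k(B_k,x_k)$ for $m\ge k$; it says nothing about how $V_k(B_i,x_k)$ for $i\neq k$ compares with $V_k(A_k,x_k)$. If $V_k(B_i,x_k)\ge V_k(A_k,x_k)$ an order-automorphism cannot place $h_k(V_k(A_k,x_k))$ above $h_k(V_k(B_i,x_k))$, and if $V_k(B_i,x_k)=1$ no automorphism moves it at all, so the assembled model gives $V(\dmd B_i,x_0)=1$ and the relation $\dmd A_i\le\dmd B_i$ cannot be falsified there. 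This obstruction is realizable: take $n=2$, $A_1=p$, $B_1=q$, $A_2=r$, $B_2=\lnot\lnot p$, $C=D=\bot$. Every counter-model of $p\land r\le q\land\lnot\lnot p$ must make $v(p)>0$, hence $v(\lnot\lnot p)=1$, so $V_1(B_2,x_1)=1$ is forced; and indeed $\dmd p\le\dmd q\h\dmd r\le\dmd\lnot\lnot p$ is $\lgc{GK}$-valid while none of the three $J$-instances is. So no choice of automorphisms can succeed on the statement in the bare form given: the argument needs the additional relations $\top\le\dmd B_i$ that saturation (via \wlr) supplies in the only context where the lemma is applied. (The paper's own write-up is equally terse at this point -- it simply says ``as in Lemma~\ref{lemmodal}'' -- so you are in good company, but a complete proof must either justify why every off-diagonal $B$-value can be brought strictly below the corresponding diagonal $A$-threshold, or strengthen the hypotheses so that the relations $\top\le B_i$ are available to absorb the case $V_k(B_i,x_k)=1$.)
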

\proof
We argue by contraposition; i.e., suppose that:
\[
\not \mdl{L}  \bigwedge_{j \in J} A_j \le \bigwedge_{j \in J} B_j \h  \bot < C \h \top \le D
 \quad \textrm{ for all } \emptyset \subset J \subseteq \{1,\ldots,n\}.
\]
We obtain a model for each $i \in \{1,\ldots,n\}$ as follows. By assumption:
\[
\not \mdl{L} A_i \land \ldots \land A_n \le B_i \land \ldots \land B_n\h  \bot <  C \h \top \le D.
\]
So we have $K_i = \langle W_i,R_i,V_i \rangle$ and  $x_i \in W_i$ (with each $W_i$ distinct) such that:
\begin{enumerate}[(1)]
\item	$V_i(A_i \land \ldots \land A_n,x_i) > V_i(B_i \land \ldots \land B_n,x_i)$
\item	$V_i(C) = 0$ and $V_i(D) < 1$.
\end{enumerate}
Moreover, without loss of generality we can assume:
\[
V_i(B_i,x_i) \le V_i(B_k,x_i) \quad \textrm{ and so } \quad V_i(A_k,x_i) > V_i(B_i,x_i)  \quad \textrm{ for }k = i \ldots n.
\]
Now using Lemma~\ref{lemscaletwo} as in the case of Lemma~\ref{lemmodal}, we define iteratively $K'_i = \langle W_i,R'_i,V'_i \rangle$ for 
$i = n \ldots 1$ such that for $j = i \ldots n$:
\begin{enumerate}[(1)]
\item	$V'_k(B_j,x_k)  < V_k(A_j,x_j)$ for $k = 1 \ldots i-1$
\item	$V'_j(B_j,x_j)  < V'_k(A_j,x_k)$ for $k = i \ldots n$
\item	$V'_j(C,x_j) = 0$ and $V'_j(D,x_j) < 1$.
\end{enumerate}
Finally, we define a model $K = \langle W, R, V \rangle$ where for a new world $x_0$:
\begin{enumerate}[(i)]
\item		$W = W_1 \cup \ldots \cup W_n \cup \{x_0\}$
\item			$Rxy = Ê\begin{cases}
				R_ixy							& x,y \in W_i\\
				1								& x = x_0, y = x_i\ {\rm for}\ i \in \{1 \ldots n\}\\			
				0 								& y = x_0
				\end{cases}$

\item		$V(p,x) = V'_i(p,x)$ for all $x \in W_i$ and $V(p,x_0) = 0$.
\end{enumerate}
As in Lemma~\ref{lemmodal}, we obtain $\not \mdl{L} \{\dmd A_i \le \dmd B_i\}_{i=1}^n \h \bot < \dmd C \h \top \le \dmd D$   as required.   \qed

Our desired result is then obtained following  the same pattern as in the completeness proof 
for the box case.

\begin{thm}
For $\lgc{L} \in \{\lgc{GK},\lgc{GK^F}\}$: \ $\mdl{L} \sor$ iff $\der{SL_\dmd} \sor$. \qed
\end{thm}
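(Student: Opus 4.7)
The plan is to mimic the structure of the proof of Theorem~\ref{thm:boxcomp}, handling $\lgc{GK}$ and $\lgc{GK^F}$ in parallel and inducting on the modal degree of $\sor$ (the maximal complexity of a diamond subformula). Since soundness is already established by Theorem~\ref{thmsoundsgkdf}, only the direction $\mdl{L} \sor \Rightarrow \der{SL_\dmd} \sor$ is at issue. For modal degree $0$, $\sor$ is propositional, so $\lgc{G}$-validity and completeness of $\lgc{SG}$ settle the matter. For the inductive step, apply Lemma~\ref{lemsaturated} to reduce to the case where $\sor$ is saturated (the logical rules, $\comr$, $\csr$, $\wlr$, $\wrr$ do not increase modal degree), and assume $\sor$ is not already propositionally $\lgc{L}$-valid, since otherwise it is derivable in $\lgc{SG}$.

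Next invoke the first lemma of this subsection to conclude that the modal part $\sor_\dmd$ of $\sor$ is itself $\lgc{L}$-valid. Then apply Lemma~\ref{lemlt} repeatedly to strip away relations of the form $\dmd A < B$ from $\sor_\dmd$, obtaining a saturated, purely modal, $\lgc{L}$-valid sequent containing only relations $\dmd A_i \le \dmd B_i$, $\bot < \dmd C_k$, and $\top \le \dmd D_\ell$ (together with trivial relations on $\bot$ and $\top$). In the $\lgc{GK^F}$ case, further apply Lemma~\ref{lemltmodal} iteratively to remove each $\top \le \dmd D_\ell$, leaving a sequent of the shape $\{\dmd A_i \le \dmd B_i\}_{i=1}^n \h \{\bot < \dmd C_k\}_{k=1}^p$ that is still $\lgc{GK^F}$-valid; in the $\lgc{GK}$ case the relations $\top \le \dmd D_\ell$ are retained.

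Now apply the modal lemma stated just before the theorem to extract a nonempty $J \subseteq \{1,\ldots,n\}$ such that $\mdl{L} \bigwedge_{j \in J} A_j \le \bigwedge_{j \in J} B_j \h \{\bot < C_k\}_{k=1}^p \h \{\top \le D_\ell\}_{\ell}$ (the last block absent in the $\lgc{GK^F}$ case). Assuming $J = \{1,\ldots,n\}$ without loss of generality and distributing the meet on the right, we get $\mdl{L} \{A_i \le B_r\}_{i=1}^n \h \{\bot < C_k\}_k \h \{\top \le D_\ell\}_\ell$ for each $r = 1,\ldots,n$. Each of these has strictly smaller modal degree than $\sor$, so the induction hypothesis supplies derivations in $\lgc{SL_\dmd}$; one application of $\mdr$ (respectively $\mdr^*$) to each then yields $\der{SL_\dmd} \{\dmd A_i \le \dmd B_r\}_{i=1}^n \h \{\bot < \dmd C_k\}_k \h \{\top \le \dmd D_\ell\}_\ell$, and repeated applications of $\comr$ combine these across $r = 1,\ldots,n$. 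A final sequence of applications of $\wlr$, $\wrr$, and $\ewr$ (in the $\lgc{GK^F}$ case to re-introduce $\top \le \dmd D_\ell$ relations, and throughout to restore the non-modal context $\sor'$ and the $\dmd A < B$ relations discarded via Lemma~\ref{lemlt}) recovers the original $\sor$.

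The main obstacle is getting the bookkeeping right in the $\lgc{GK^F}$ case: the rule $\mdr^*$, unlike $\mdr$, does not accommodate $\top \le \dmd D$ relations in its active part, so such relations must be peeled off by Lemma~\ref{lemltmodal} before the modal rule fires and then restored by weakening afterwards. One must verify that the hypotheses of Lemmas~\ref{lemlt} and~\ref{lemltmodal} (modality, saturation, $\lgc{L}$-validity of what remains) are preserved through the successive simplifications, and that the shapes $\bot < \dmd C$ and $\top \le \dmd D$ which the modal rule leaves in context do not break saturation when we later reintroduce them.
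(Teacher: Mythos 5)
Your overall strategy --- induction on modal degree, reduction to a saturated sequent via Lemma~\ref{lemsaturated}, the propositional/modal split, stripping strict relations with Lemma~\ref{lemlt} and, for $\lgc{GK^F}$, the relations $\top \le \dmd D$ with Lemma~\ref{lemltmodal}, then the modal lemma followed by the induction hypothesis, the modal rule, and $\comr$ --- is exactly the adaptation of Theorem~\ref{thm:boxcomp} that the paper intends, and your identification of where $\lgc{GK}$ and $\lgc{GK^F}$ diverge is right. However, the final step does not go through as you state it. The conclusion of $\mdr$ (and of $\mdr^*$) has the shape $\sor \h \dmd A \le \dmd \De \h \cdots$: all the non-strict relations between diamond formulas introduced by the rule share a \emph{single left-hand side} $\dmd A$. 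It is the left--right mirror image of $\mmr$, whose conclusion $\mbx \Ga \le \mbx B$ shares a single right-hand side. The sequent $\{\dmd A_i \le \dmd B_r\}_{i=1}^n$ that you claim to obtain by ``one application of $\mdr$'' has $n$ distinct left-hand sides and a common right-hand side, so it is not an instance of the conclusion of $\mdr$; no application of the rule produces it.

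The decomposition and recombination therefore have to be transposed relative to the box case. What one should derive, for each $j$, is the ``row'' sequent $\dmd A_j \le \dmd [B_1,\ldots,B_n] \h \bot < \dmd \Si \h \top \le \dmd \The$, which does match the conclusion of $\mdr$, and then recover the diagonal $\{\dmd A_i \le \dmd B_i\}_{i=1}^n$ from these $n$ rows by $\ewr$ and repeated $\comr$. This requires $\mdl{L} A_j \le [B_1,\ldots,B_n] \h \cdots$ for each $j$, which amounts to validity of $\bigvee_{j} A_j \le \bigvee_{j} B_j \h \cdots$ --- note that $\dmd$ commutes with $\lor$, not with $\land$. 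Your reading of the modal lemma, which extracts $\{A_i \le B_r\}_{i=1}^n$ valid for each $r$ from validity of $\bigwedge_{j \in J} A_j \le \bigwedge_{j \in J} B_j$, does not yield the row sequents: for instance $v(A_1)=\tfrac{1}{2}$, $v(A_2)=\tfrac{1}{10}$, $v(B_1)=v(B_2)=\tfrac{1}{5}$ satisfies $\min_j v(A_j)\le\min_j v(B_j)$ but not $v(A_1)\le\max_j v(B_j)$. So before the argument closes you must invoke (or restate and reprove) the modal lemma in its join form $\bigvee_{j \in J} A_j \le \bigvee_{j \in J} B_j \h \bot < C \h \top \le D$; as written, your proof stops one genuinely missing inference short of the conclusion.
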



\subsection{Consequences}

As in the case of the box fragment, we can use our completeness result for sequent of relations calculi 
to establish completeness also for the axiomatizations of the diamond fragments presented in Section~\ref{ss:boxdiamond}.

\begin{thm}
For $\lgc{L} \in \{\lgc{GK}, \lgc{GK^F}\}$: \ 
$\mdl{L} A$ iff $\der{HL_\dmd} A$.
\end{thm}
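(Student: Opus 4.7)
\proof
The plan is to follow the template of Theorem~\ref{thmaxcomp} for the box fragment. Soundness is a standard induction: one checks that each axiom of $\lgc{HL_\dmd}$ is $\lgc{L}$-valid and each rule preserves $\lgc{L}$-validity. The non-trivial checks concern the necessitation rules: $\nec^*_\dmd$ preserves $\lgc{GK^F}$-validity by monotonicity of $\dmd$, while the stronger rule $\nec_\dmd$ preserves $\lgc{GK}$-validity thanks to the crispness of the accessibility relation. Indeed, if $(A \to B) \lor C$ is valid in a crisp model, then at each world $x$ either some accessible $y$ satisfies $V(C,y)=1$, forcing $V(\dmd C,x)=1$, or every accessible $y$ satisfies $V(A,y) \le V(B,y)$, forcing $V(\dmd A,x) \le V(\dmd B,x)$.

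For completeness, the previous theorem delivers $\der{SL_\dmd} \top \le A$ from $\mdl{L} A$, so it suffices to show by induction on the derivation that $\der{SL_\dmd} \sor$ implies $\der{HL_\dmd} I_S(\sor)$. The propositional and structural rules and the axioms of $\lgc{SL_\dmd}$ are handled in pure G\"odel logic exactly as in the proof of Theorem~\ref{thmaxcomp}; the only genuinely new case is the modal rule.

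Write $D = \bigvee \De$, $s = \bigvee \Si$, and (in the $\lgc{GK}$ case) $T = \bigvee \The$. The $\lgc{G}$-identities $\bigvee_{D' \in \De}(A \to D') \leftrightarrow (A \to D)$, $\bigwedge \lnot \Si \leftrightarrow \lnot s$, and $\lnot X \to Y \leftrightarrow Y \lor \lnot\lnot X$ let one reshape the premise $I_S$ of $\mdr^*$ into $A \to (D \lor \lnot\lnot s)$, and the premise $I_S$ of $\mdr$ into $(A \to D) \lor (T \lor \lnot\lnot s)$. An application of $\nec^*_\dmd$ in the first case (respectively $\nec_\dmd$ with $C := T \lor \lnot\lnot s$ in the second) then produces $\dmd A \to \dmd(D \lor \lnot\lnot s)$ (respectively $(\dmd A \to \dmd D) \lor \dmd(T \lor \lnot\lnot s)$). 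Distributing $\dmd$ over $\lor$ by $K_\dmd$ and then swapping the inner $\dmd$ past $\lnot\lnot$ by $Z_\dmd$ yields a formula $\lgc{HG}$-equivalent to $I_S$ of the conclusion. The axiom $F_\dmd$ is invoked in the boundary case $\De = \emptyset$, where it collapses $\dmd \bot$ to $\bot$; the side context $\sor$ of the rule is absorbed by $\lgc{G}$-weakening of the resulting implication.

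I expect the main obstacle to be precisely this modal step, because the premise of $\mdr$ or $\mdr^*$ has a disjunctive shape that does not directly fit the input of either necessitation rule. The crucial insight is to use residuation together with the G\"odel-logic equivalence $\lnot Z \to Y \leftrightarrow Y \lor \lnot\lnot Z$ to reshape the premise before applying necessitation, and then to deploy $K_\dmd$ and $Z_\dmd$ to push the resulting $\dmd$ inwards and realign the output with the $I_S$-form of the conclusion. Once this transformation is in hand, the induction is routine. \qed
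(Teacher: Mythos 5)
Your proposal is correct and follows essentially the same route as the paper's: reduce completeness to showing that the modal rules preserve $I_S$-derivability in the Hilbert system, collapse the multisets via $K_\dmd$ and propositional G\"odel equivalences, reshape the premise using $\lnot X \to Y \leftrightarrow Y \lor \lnot\lnot X$ so that the relevant necessitation rule applies, and then realign with $K_\dmd$ and $Z_\dmd$. Your explicit soundness check for $\nec_\dmd$ and the $F_\dmd$ boundary case for empty $\De$ are details the paper leaves implicit, but the core argument is identical.
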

\begin{proof}
Let $\lgc{L} \in \{\lgc{GK}, \lgc{GK^F}\}$. 
Following the proof of Theorem~\ref{thmaxcomp}, it suffices to show that for each rule $\sor_1,\ldots,\sor_n \ / \ \sor$ of $\lgc{SL_\dmd}$, 
whenever $\der{\lgc{HL_\dmd}} I(\sor_i)$ for $i = 1 \ldots n$, also $\der{\lgc{HL_\dmd}} I(\sor)$. In particular, let us just consider 
the case of $\mdr$ for $\lgc{HGK_\dmd}$, since the case of $\mdr^*$ for $\lgc{HGK^F_\dmd}$ follows exactly the same pattern. 
 Using the derivabilities    $\der{\lgc{HGK_\dmd}} 
((A \to \dmd B_1) \lor (A \to \dmd B_2)) \leftrightarrow (A \to \dmd (B_1 \lor B_2))$ and 
$\der{\lgc{HGK_\dmd}} (\lnot \dmd A \land \lnot \dmd B) \leftrightarrow \lnot \dmd (A\lor B)$, it is enough to 
show that $\der{\lgc{HGK_\dmd}} \lnot C \to ((A \to B) \lor D)$ implies 
$\der{\lgc{HGK_\dmd}} \lnot \dmd C \to ((\dmd A \to \dmd B) \lor \dmd D)$. Suppose 
that $\der{\lgc{HGK_\dmd}} \lnot C \to ((A \to B) \lor D)$. Then using the derivability 
$\der{\lgc{HG}} (\lnot \lnot F \to G) \leftrightarrow (G \lor \lnot F)$, 
we have $\der{\lgc{HGK_\dmd}} \lnot \lnot C \lor ((A \to B) \lor D)$.
Applying $\nec_\dmd$, we get $\der{\lgc{HGK_\dmd}} (\dmd A \to \dmd B) \lor \dmd (\lnot \lnot C\lor D)$, 
and using $(K_\dmd)$, we have $\der{\lgc{HGK_\dmd}}  (\dmd A \to \dmd B) \lor (\dmd \lnot \lnot C \lor \dmd D)$.
Using $(Z_\dmd)$, we obtain $\der{\lgc{HGK_\dmd}}  (\dmd A \to \dmd B) \lor (\lnot \lnot\dmd  C \lor \dmd D)$, and finally  
$\der{\lgc{HGK_\dmd}} \lnot \dmd C \to ((\dmd A \to \dmd B) \lor \dmd D)$ as required. \end{proof}

The proof of the following complexity results follows exactly the same pattern as in the proof of 
Theorem~\ref{thm:boxpspace} for the box fragment of $\lgc{GK}$.

\begin{thm}
The validity problems for the diamond fragments of $\lgc{GK}$ and $\lgc{GK^F}$ are PSPACE-complete. \qed
\end{thm}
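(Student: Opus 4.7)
The plan is to reprise the two-part strategy from the proof of Theorem~\ref{thm:boxpspace}. For PSPACE-hardness, I would reduce from validity in classical modal logic $\lgc{K}$. For the diamond fragment of $\lgc{GK}$ the translation $*$ sending $p$ to $\lnot\lnot p$ used in the box case works verbatim: given a $\lgc{GK}$-model $\langle W, R, V\rangle$, the derived valuation $V'(p,x) = V(\lnot\lnot p, x)$ takes only the values $0$ and $1$, and since $R$ is crisp this property propagates through all connectives including $\dmd$, so by induction $V'(A,x) = V(A^*, x)$ for every $\lgc{K}$-formula $A$. For the diamond fragment of $\lgc{GK^F}$ the same translation will not suffice, because $\dmd$ need not preserve $\{0,1\}$-valuedness when $R$ is fuzzy; I would fix this by additionally wrapping each diamond in a double negation, setting $(\dmd A)^* = \lnot\lnot \dmd A^*$. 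Then $V(A^*, x) \in \{0,1\}$ in every $\lgc{GK^F}$-model, and a routine induction shows that $\mdl{K} A$ iff $\mdl{GK^F} A^*$, by constructing from a $\lgc{GK^F}$-countermodel a crisp K-model with $R'xy$ iff $Rxy > 0$ and $V'(p,x) = 1$ iff $V(p,x) > 0$.

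For PSPACE-inclusion I would analyse proof search in $\lgc{SGK_\dmd}$ (respectively $\lgc{SGK^F_\dmd}$) exactly as for $\lgc{SGK_\mbx}$. Given $A \in \fml_{{\mathcal L}_\dmd}$, let $\Phi_A = \{C \ltoe D \mid C,D \in Sub(A) \cup \{\top,\bot\}, \ltoe \in \{<,\le\}\}$, so $|\Phi_A| = O(|A|^2)$; every sequent of relations appearing in a derivation of $\top \le A$ is a subset of $\Phi_A$ and hence has size $O(|A|^2)$. Using the $\lgc{L}$-invertibility of the logical rules, proof search alternates two phases on each branch: (i) apply the logical rules together with $\wlr$, $\wrr$, $\csr$, and $\comr$ upwards to produce a saturated modal/atomic sequent and test the axiom condition; (ii) apply $\mdr$ or $\mdr^*$ and restart. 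Phase (i) has length $O(|A|^2)$, since each logical step strictly reduces formula complexity and each structural step adds one new relation to a sequent of bounded size; phase (ii) strictly decreases the modal degree. Hence the entire branch length is $O(|A|^3)$.

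Depth-first search then stores one branch at a time in polynomial space, together with polynomially bounded bookkeeping at each level recording the backtracking choices available, in particular the choice of principal formula and the partition of diamond-relations into the premise and the discarded context made at each application of $\mdr$ or $\mdr^*$. Since each rule has at most two premises, the whole search fits in polynomial space, establishing the upper bound.

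The main obstacle is the $\lgc{GK^F}$ hardness reduction, where fuzziness of $R$ blocks the direct double negation embedding; the diamond-wrapping trick is needed to keep the translated formula $\{0,1\}$-valued. The remainder of the argument is a direct transcription of the box case, with the modal rules $\mdr$ and $\mdr^*$ playing the role of $\mmr$, and the new complication that $\top \le \dmd A$ relations are treated differently in the two calculi (absorbed into the modal rule for $\lgc{GK}$ and eliminable by Lemma~\ref{lemltmodal} for $\lgc{GK^F}$) has no effect on the complexity bound.
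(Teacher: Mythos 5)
Your proposal is correct and follows essentially the same route as the paper, which simply asserts that the argument ``follows exactly the same pattern'' as Theorem~\ref{thm:boxpspace}: reduction from classical $\lgc{K}$ via the double-negation translation for hardness, and polynomial-space depth-first proof search in the sequent of relations calculi (polynomial sequent size, $O(|A|^3)$ branch length, polynomial bookkeeping per backtracking point) for inclusion. Your one genuine addition --- observing that for $\lgc{GK^F}$ the fuzzy accessibility relation prevents $\dmd$ from preserving $\{0,1\}$-valuedness, so the translation must also set $(\dmd A)^* = \lnot\lnot\dmd A^*$ --- is a real point the paper's one-line proof glosses over, and your fix is correct.
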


Note finally that it is not so easy to extend the hypersequent calculus $\lgc{GG}$ to calculi for the 
diamond fragments, since there is no natural way to interpret strict inequality relations of the form 
$\bot < A$ as sequents. One option would be to add decomposition rules for dealing with formulas 
$A \to \bot$ on the left of sequents occurring in a hypersequent. Completeness of the cut-free calculus could then 
be proved via a translation into sequents of relations. However, establishing cut elimination would be 
complicated and it is difficult to see how such a calculus could be easily extended to the first-order 
level or adapted to other logics.


\section{Discussion} \label{finalsection}

In this paper we have presented proof systems for the diamond and box fragments of two minimal normal modal logics based on 
a G{\"o}del fuzzy logic $\lgc{GK}$,  where the accessibility relation is classical (crisp), and $\lgc{GK^F}$,  where the accessibility 
relation is fuzzy. In particular, we have introduced a sequent of relations calculus and a hypersequent calculus (admitting 
cut-elimination) for the box fragment of $\lgc{GK}$, which coincides with the same fragment of $\lgc{GK^F}$, and sequent 
of relations calculi for the (distinct) diamond fragments of $\lgc{GK}$ and $\lgc{GK^F}$. We have used the calculi to establish 
completeness for corresponding axiomatizations of the fragments (a new result in the case of the diamond fragment of 
$\lgc{GK}$) and to establish new decidability and  PSPACE complexity bounds. Finally, in this section, we discuss some related  
work and avenues for further research.


\subsection{Related Proof Systems}

As observed already in the introduction, numerous proof systems for G\"odel logic may be found in the literature, 
encompassing  sequent calculi~\cite{son:gen,Dyckhoff99},  hypersequent calculi~\cite{Avron91b,AvrKon:00}, sequent of 
relations calculi~\cite{Baaz:1999:ACP}, decomposition systems~\cite{AvrKon:00}, graph-based methods~\cite{Lar07}, 
and goal-directed systems~\cite{met:book}. Our choice of the sequent of relations and hypersequent frameworks for 
 G\"odel modal logics was guided by a number of factors. First, like the sequent, decomposition, graph-based, and 
 goal-directed systems, and also the hypersequent calculus $\lgc{GLC^*}$ of~\cite{AvrKon:00}, 
 the sequent of relations calculus for G\"odel logic has invertible logical rules. Unlike most of 
 these other systems, however, the rules deal directly with the top-level connective of formulas and may be described 
 as reasonably ``natural'' or at least relatively easy to understand. Most significantly, perhaps, the framework facilitates a relatively 
 straightforward addition of modalities and easier completeness proofs than would be obtained in, e.g., a sequent calculus 
 framework. Moreover, it seems (without working through all the details) that the rules obtained and 
 completeness proofs for other calculi with invertible rules would be very similar and indeed could be obtained by 
 translating our results into these frameworks. For example, a multiple conclusion sequent $A_1,\ldots,A_n \seq B_1,\ldots,B_m$ 
 interpreted as $(A_1 \land \ldots \land A_n) \to (B_1 \lor \ldots \lor B_m)$ is $\lgc{G}$-valid iff the sequent of relations 
 $A_1 < \top \h \ldots \h A_m < \top \h \top \le B_1  \h \ldots \h \top \le B_m$ is $\lgc{G}$-valid. Conversely, a sequent of relations 
 $A_1 < B_1 \h \ldots \h A_n < B_n \h C_1 \le D_1 \h \ldots \h C_m \le D_m$ is $\lgc{G}$-valid iff the sequent 
 $B_1 \to A_1, \ldots, B_n \to A_n \seq C_1 \to D_1,\ldots, C_m \to D_m$ is $\lgc{G}$-valid.
 
 We do not of course mean to suggest that the other mentioned frameworks cannot be useful tools for investigating G\"odel modal 
 logics. Indeed, while our sequent of relations calculi provide ready-to-implement decision procedures for the box and diamond fragments 
 of $\lgc{GK}$ of optimal complexity and avoiding loop checking, it may be that other calculi provide a more suitable 
 basis for developing automated reasoning methods. In particular, there exist fast graph-based methods (see~\cite{Lar07}) for more 
 details) for the P-time problem  of  checking whether an atomic sequent of relations is $\lgc{G}$-valid (required in our algorithm before 
 applying the modal rules). Other gains in efficiency may be obtained by avoiding duplication of subformulas using graph-based 
 representations of sequents of relations, and by implementing some form of ``goal-directed'' proof search. We mention also 
 that our calculi provide a starting point for developing algorithms for fuzzy description logics based on G{\"o}del logic (as opposed 
 to the systems based on finite-valued or witnessed models~\cite{haj:making,Bobillo09}), although since our results are so far restricted 
 to the box and diamond fragments, this development would depend on the language under consideration.
 
The hypersequent calculus $\lgc{GG}$ does not have invertible logical rules and is useful, not as a basis for automated reasoning 
methods, but rather as a suitable tool for tackling theoretical problems for G\"odel logic. Unlike the other mentioned calculi, it 
has been extended to the first-order level (preserving cut-elimination) and used to establish properties such as Herbrand's theorem, 
Skolemization, and standard completeness for G\"odel logic~\cite{BaaZac00,met:book}. Similarly, the hypersequent calculus 
$\lgc{GGK_\mbx}$ for the box fragments of $\lgc{GK}$ and $\lgc{GK^F}$ can be extended to a first-order system admitting 
cut-elimination, although establishing completeness with respect to a semantics based on Kripke frames could be 
a challenging problem. Let us remark also that since hypersequents provide a general uniform framework for fuzzy and other 
substructural logics (see~\cite{met:book}), this may hold also for modal fuzzy logics. Certainly, structural rules can be removed 
from $\lgc{GGK_\mbx}$ to obtain hypersequent calculi admitting cut-elimination. The challenge then, as in the first-order case, is 
to relate such calculi to a semantics based on Kripke frames. A further limitation currently is also that we have a hypersequent 
calculus only for the box fragment. Calculi for the diamond fragments can be obtained by translation from the sequent of relations systems 
but uniformity and extensions to the first-order level or other logics are lost.


\subsection{Related Logics}

G\"odel logic is renowned not just as a fuzzy logic but also as a well-known intermediate logic. It therefore makes 
sense to ask what relationship $\lgc{GK}$ and $\lgc{GK^F}$ bear to modal intuitionistic and intermediate logics found in the literature. 
From a semantic perspective, the approaches are significantly different. Kripke models for the most popular 
intuitionistic modal logic $\lgc{IK}$ (see, e.g.,~\cite{Sim94}) and other modal intermediate logics (see, e.g.,~\cite{Wolter97}) 
make use of two accessibility relations, one for the modal operator and another for the intuitionistic connectives. 
Since Kripke models for G\"odel logic are linearly ordered, the resulting Kripke models for G\"odel modal logics developed in this way are also 
linear, which is not the case in general for Kripke models for $\lgc{GK}$ or $\lgc{GK^F}$. A closer comparison semantically could be made 
with an ``intuitionistic modal logic'' defined by considering standard modal Kripke frames equipped with an intuitionistic Kripke frame 
at each node.  Nevertheless, from a syntactic perspective, $\lgc{GK}$ and $\lgc{GK^F}$ may be viewed as  intermediate modal logics in the 
sense that each $\lgc{GK}$-valid formula (which includes each $\lgc{GK^F}$-valid formula)  is valid in $\lgc{K}$ and also every $\lgc{IK}$-valid formula is 
$\lgc{GK^F}$-valid  (and therefore also $\lgc{GK}$-valid). The latter follows easily from the fact that  (see, e.g.,~\cite{Sim94}) 
an axiomatization for $\lgc{IK}$ is given by an axiomatization for intuitionistic logic extended with $(K_\mbx)$, $(K_\dmd)$, $(F_\dmd)$, 
$\nec_\mbx$, and the $\lgc{GK^F}$-valid ``connecting axioms'' $\mbx (A \to B) \to (\dmd A \to \dmd B)$ and $(\dmd A \to \mbx B) \to \mbx (A \to B)$. 

We may also consider the situation regarding modal finite-valued G\"odel logics, obtained from our definitions by restricting 
valuations to $0$, $1$, and a particular finite number of truth values in $[0,1]$. We expect that sequent of relations calculi can be 
obtained for the same fragments of modal finite-valued G\"odel logics using the modal rules provided in this paper but adding axioms for 
the propositional case. However, developing these calculi is not so interesting from a theoretical perspective, since the logics 
are already known to be PSPACE-complete~\cite{Bobillo09} (proved in the context of G\"odel description logics by mapping to the classical case). 
Similarly, we can consider {\em witnessed} G\"odel modal logics where the values of box formulas $\mbx A$ and diamond formulas $\dmd A$ 
are attained by $A$ at some world. Calculi for these logics can (we expect) be developed in our framework, but it also seems possible to obtain 
PSPACE-completeness results using the methods of~\cite{Bobillo09} by providing a bound (based on the formula to be proved) to reduce validity to 
validity in a modal finite-valued G\"odel logic.


\subsection{Further Work}

As noted in the introduction, this paper provides only a starting point for investigating G\"odel modal logics. 
The most pressing concern is to extend our proof-theoretic treatment to the full language of $\lgc{GK}$ and $\lgc{GK^F}$, 
and thereby obtain axiomatization, decidability, and complexity results for these logics.  It has been conjectured by Caicedo and 
Rodr{\'i}guez that an axiomatization for $\lgc{GK^F}$ is provided by the axioms and rules for the box and diamond 
fragments extended with the connection axioms of $\lgc{IK}$ (see the previous subsection), but a proof using their algebraic 
approach is still lacking. From our more proof-theoretic perspective, the difficulty is that appropriate sequent of relations rules 
should deal with several combinations of box and diamond fragments occurring in relations, i.e., $\mbx A \ltoe \mbx B$, 
$\mbx A \ltoe \dmd B$, $\dmd A \ltoe \mbx B$, $\dmd A \ltoe \dmd B$ where $\ltoe$ is $\le$ or $<$, plus relations involving 
$\bot$ or $\top$. The combinatorial nature of the required rules leads to an explosion in the number of cases that should be 
considered in the completeness proof, and it is not clear that the ideas developed in this paper of squeezing or pushing up 
or down valuations suffice to cope with all possibilities. It may therefore be preferable to consider a more general framework 
than sequent of relations,  perhaps using labels to encode semantic information.

We also intend to consider stronger modal logics such as the box and diamond fragments axiomatized in \cite{CR08} 
based on  fuzzy Kripke frames satisfying further properties of reflexivity, transitivity, seriality, and symmetry. Our conjecture is that we 
can extend the sequent of relation calculus to handle all these conditions except possibly symmetry. Again, however, 
a more general (labelled) framework might be useful. Finally, a further direction of research is the extension of our proof systems 
to a richer language comprising  multiple modalities  and truth constants, interesting for applications to  fuzzy description logics 
(see, e.g.,~\cite{straccia01a,haj:making,Bobillo09}). 


\bibliographystyle{plain}

\end{document}